\theoremstyle{definition}
\newtheorem{definition}{Definition}
\theoremstyle{lemma}
\newtheorem{lemma}{Lemma}
\theoremstyle{theorem}
\newtheorem{theorem}{Theorem}
\theoremstyle{assumption}
\newtheorem{assumption}{Assumption}
\definecolor{markercolor}{RGB}{124.9, 255, 160.65}
\pgfplotsset{
compat=1.3,
width=10cm,
tick label style={font=\small},
label style={font=\small},
legend style={font=\small}
}
\newcommand{\logLogSlopeTriangle}[5]
{

    \pgfplotsextra
    {
        \pgfkeysgetvalue{/pgfplots/xmin}{\xmin}
        \pgfkeysgetvalue{/pgfplots/xmax}{\xmax}
        \pgfkeysgetvalue{/pgfplots/ymin}{\ymin}
        \pgfkeysgetvalue{/pgfplots/ymax}{\ymax}

        \pgfmathsetmacro{\xArel}{#1}
        \pgfmathsetmacro{\yArel}{#3}
        \pgfmathsetmacro{\xBrel}{#1-#2}
        \pgfmathsetmacro{\yBrel}{\yArel}
        \pgfmathsetmacro{\xCrel}{\xArel}

        \pgfmathsetmacro{\lnxB}{\xmin*(1-(#1-#2))+\xmax*(#1-#2)} 
        \pgfmathsetmacro{\lnxA}{\xmin*(1-#1)+\xmax*#1} 
        \pgfmathsetmacro{\lnyA}{\ymin*(1-#3)+\ymax*#3} 
        \pgfmathsetmacro{\lnyC}{\lnyA+#4*(\lnxA-\lnxB)}
        \pgfmathsetmacro{\yCrel}{\lnyC-\ymin)/(\ymax-\ymin)} 

        \coordinate (A) at (rel axis cs:\xArel,\yArel);
        \coordinate (B) at (rel axis cs:\xBrel,\yBrel);
        \coordinate (C) at (rel axis cs:\xCrel,\yCrel);

        \draw[#5]   (A)-- node[pos=0.5,anchor=north] {}
                    (B)-- 
                    (C)-- node[pos=0.5,anchor=west] {\textcolor{black}{#4}}
                    cycle;
    }
}
\newcommand{\logLogSlopeTriangleFlip}[5]
{

    \pgfplotsextra
    {
        \pgfkeysgetvalue{/pgfplots/xmin}{\xmin}
        \pgfkeysgetvalue{/pgfplots/xmax}{\xmax}
        \pgfkeysgetvalue{/pgfplots/ymin}{\ymin}
        \pgfkeysgetvalue{/pgfplots/ymax}{\ymax}

        \pgfmathsetmacro{\xBrel}{#1-#2}
        \pgfmathsetmacro{\yBrel}{#3}
        \pgfmathsetmacro{\xCrel}{#1}

        \pgfmathsetmacro{\lnxB}{\xmin*(1-(#1-#2))+\xmax*(#1-#2)} 
        \pgfmathsetmacro{\lnxA}{\xmin*(1-#1)+\xmax*#1} 
        \pgfmathsetmacro{\lnyA}{\ymin*(1-#3)+\ymax*#3} 
        \pgfmathsetmacro{\lnyC}{\lnyA+#4*(\lnxA-\lnxB)}
        \pgfmathsetmacro{\yCrel}{\lnyC-\ymin)/(\ymax-\ymin)} 

	\pgfmathsetmacro{\xArel}{\xBrel}
        \pgfmathsetmacro{\yArel}{\yCrel}

        \coordinate (A) at (rel axis cs:\xArel,\yArel);
        \coordinate (B) at (rel axis cs:\xBrel,\yBrel);
        \coordinate (C) at (rel axis cs:\xCrel,\yCrel);

        \draw[#5]   (A)-- node[pos=0.5,anchor=east] {\textcolor{black}{#4}}
                    (B)-- 
                    (C)-- node[pos=0.5,anchor=south] {}
                    cycle;
    }
}
\renewcommand{\hat}[1]{\widehat{#1}}
\newcommand{\td}[2]{\frac{{\rm d}#1}{{\rm d}{\rm #2}}}
\newcommand{\pd}[2]{\frac{\partial#1}{\partial#2}}
\newcommand{\nor}[1]{\left\| #1 \right\|}
\newcommand{\LRp}[1]{\left( #1 \right)}
\newcommand{\LRs}[1]{\left[ #1 \right]}
\newcommand{\LRa}[1]{\left\langle #1 \right\rangle}
\newcommand{\LRb}[1]{\left| #1 \right|}
\newcommand{\LRc}[1]{\left\{ #1 \right\}}
\newcommand{\LRl}[1]{\left. #1 \right|}
\newcommand{\jump}[1] {\ensuremath{\llbracket#1\rrbracket}}
\newcommand{\avg}[1] {\ensuremath{\LRc{\!\{#1\}\!}}}
\newcommand{\eval}[2][\right]{\relax
  \ifx#1\right\relax \left.\fi#2#1\rvert}
\newcommand{\note}[1]{#1}
\newcommand{\noteOne}[1]{#1}
\newcommand{\noteTwo}[1]{#1}
\newcommand{\diag}[1]{{\rm diag}\LRp{#1}}
\newcolumntype{C}[1]{>{\centering\let\newline\\\arraybackslash\hspace{0pt}}m{#1}}
\newcommand*\diff[1]{\mathop{}\!{\mathrm{d}#1}}
\renewcommand\d[1]{\mspace{6mu}\mathrm{d}#1\@ifnextchar\d{\mspace{-3mu}}{}}
\begin{document}


\begin{frontmatter}
\title{On discretely entropy conservative and entropy stable discontinuous Galerkin methods}

\author[rice]{Jesse Chan\corref{cor1}}
\ead{Jesse.Chan@caam.rice.edu}
\address[rice]{Department of Computational and Applied Mathematics, Rice University, 6100 Main St, Houston, TX, 77005}

\begin{abstract}
High order methods based on diagonal-norm summation by parts operators can be shown to satisfy a discrete conservation or dissipation of entropy for nonlinear systems of hyperbolic PDEs \cite{fisher2013high, carpenter2014entropy}.  These methods can also be interpreted as nodal discontinuous Galerkin methods with diagonal mass matrices \cite{gassner2016split, gassner2016well, wintermeyer2017entropy, chen2017entropy}.  In this work, we describe how use flux differencing, quadrature-based projections, and SBP-like operators to construct discretely entropy conservative schemes for DG methods under more arbitrary choices of volume and surface quadrature rules.  The resulting methods are semi-discretely entropy conservative or entropy stable with respect to the volume quadrature rule used.  Numerical experiments confirm the stability and high order accuracy of the proposed methods for the compressible Euler equations in one and two dimensions.  
\end{abstract}
\end{frontmatter}


\section{Introduction}

Numerical simulations in engineering increasingly require higher accuracy without sacrificing computational efficiency.  Because they are more accurate than low order methods per degree of freedom for sufficiently regular solutions, high order methods provide one avenue towards improving fidelity in numerical simulations while maintaining reasonable computational costs.  High order methods which can accommodate unstructured meshes are desirable for problems with complex geometries, and among such methods, high order discontinuous Galerkin (DG) methods are particularly well-suited to the solution of time-dependent hyperbolic problems on modern computing architectures \cite{hesthaven2007nodal, klockner2009nodal}.  

The accuracy of high order methods can be attributed in part to their low numerical dissipation and dispersion compared to low order schemes \cite{ainsworth2004dispersive}.  This accuracy has made them advantageous for the simulation of wave propagation \cite{hesthaven2007nodal, wilcox2010high}.  However, while high order methods can be applied in a stable manner to linear wave propagation problems, instabilities are observed when applying them to nonlinear hyperbolic problems.  This is contrast to low order schemes, whose high numerical dissipation tends to apply a stabilizing effect \cite{wang2013high}.  As a result, most high order schemes for nonlinear conservation laws typically require additional stabilization procedures, including  filtering \cite{hesthaven2007nodal}, slope limiting \cite{krivodonova2007limiters}, artificial viscosity \cite{persson2006sub}, and polynomial de-aliasing through over-integration \cite{kirby2003aliasing}.  Moreover, stabilized numerical methods can still fail, requiring user intervention or heuristic modifications to achieve non-divergent solutions.  

For linear wave propagation problems, semi-discretely energy stable numerical methods can be constructed, even in the presence of curvilinear coordinates or variable coefficients \cite{warburton2013low, chan2016weight1, chan2016weight2, chan2017weight}.  This semi-discrete stability implies that, under a stable timestep restriction (CFL condition), discrete solutions do not suffer from non-physical growth in time.  However, for nonlinear systems of conservation laws, traditional methods do not admit theoretical proofs of semi-discrete stability.  This was addressed for low order methods with the introduction of discretely entropy conservative and entropy stable finite volume schemes by Tadmor \cite{tadmor1987numerical}.   These schemes rely on a specific entropy conservative flux which satisfies a condition involving the entropy variables and entropy potential, and were extended to low order finite volume methods on unstructured grids in \cite{ray2016entropy}.  High order entropy stable methods were also developed for structured grids in \cite{fjordholm2012arbitrarily} based on an entropy conservative essentially non-oscillatory (ENO) reconstruction.  

The extension of entropy conservative schemes to unstructured high order methods was done much more recently for the compressible Euler and Navier-Stokes equations in \cite{fisher2013high, carpenter2014entropy} based on a spectral collocation approach on tensor product elements, which can also be interpreted as a mass-lumped DG spectral element (DG-SEM) scheme.  The proof of entropy conservation relies on the presence of a diagonal mass matrix, the summation by parts (SBP) property \cite{gassner2013skew}, and  a concept referred to as \emph{flux differencing}.  Similar entropy-stable schemes have been constructed for the shallow water and MHD equations \cite{gassner2016well,  wintermeyer2017entropy, winters2017uniquely}.  Finally, high order entropy conservative and entropy stable schemes have been extended to unstructured triangular meshes in \cite{crean2017high, chen2017entropy}.  

{It is possible to construct energy preserving schemes for certain conservation laws based on split forms of conservation laws, which involve both conservative and non-conservative derivative terms.  Split formulations have been shown to recover kinetic energy preserving schemes for the compressible Euler and Navier-Stokes equations under diagonal norm SBP operators \cite{gassner2014kinetic, ortleb2016kinetic, ortleb2017kinetic}. 
Additionally, for dense norm and generalized SBP operators, stable schemes for Burgers' equation can be constructed based on the split form of the underlying equations \cite{gassner2013skew, ranocha2017extended, ranocha2017generalised}.  However, entropy conservative and entropy stable schemes for the compressible Euler or Navier-Stokes equations do not correspond to split formulations \cite{chen2017entropy}, and} (to the authors knowledge) the construction of unstructured high order entropy conservative and entropy stable schemes for these equations has required diagonal norm SBP operators.\footnote{Entropy stable high order finite element and DG methods which do not fall under the diagonal norm SBP-DG category have been proposed \cite{hughes1986new}, but the proofs are often given at the continuous level, relying on exact integration or the chain rule, which do hold at the discrete level.}  We refer to DG methods with these properties as diagonal norm SBP-DG methods.  

Appropriate diagonal norm SBP operators are straightforward to construct on tensor product elements based on a DG-SEM discretization.  Diagonal-norm SBP operators can also be constructed for triangles and tetrahedra \cite{chin1999higher,  hicken2016multidimensional, chen2017entropy}; however, the number of nodal points for such operators is typically greater than the dimension of the natural polynomial approximation space, {and the resulting diagonal norm SBP-DG operators do not correspond to any basis \cite{hicken2016multidimensional}}.  Furthermore, to the author's knowledge, appropriate point sets have only been constructed for $N \leq 4$ in three dimensions \cite{zhebel2014comparison}, and the construction of high order diagonal 
norm SBP-DG methods has not yet been performed for uncommon elements such as pyramids \cite{chan2016orthogonal}.  


This work focuses on the construction of entropy conservative high order DG schemes for systems of conservation laws.  In order to generalize beyond diagonal norm SBP-DG methods, we will consider DG discretizations using {over-integrated} quadrature rules with more points than the dimension of the approximation space, {which are commonly used for non-tensor product elements in two and three dimensions \cite{xiao2010quadrature}}.  {These quadrature rules induce DG schemes which are related to dense norm and generalized SBP operators \cite{fernandez2014generalized, ranocha2017extended, ranocha2017comparison}, for which discretely entropy stable schemes for the compressible Euler equations have not yet been constructed}.  {We present proofs of discrete entropy stability using both a matrix formulation involving a ``decoupled'' SBP-like operator and continuous formulations involving projection and lifting operators.  In both cases, the proofs rely only on properties which hold under quadrature-based integration.  We also focus on ensuring discrete entropy stability for conservation laws which do not admit a nonlinearly stable split formulation. }


The outline of the paper is as follows: Section~\ref{sec:intro} will briefly review the construction of entropy conservative diagonal norm SBP-DG methods on a single element.  Section~\ref{sec:ecdg} will describe how to construct analogous entropy conservative methods on single element in a continuous setting.  Section~\ref{sec:ecdg2} will discuss extensions to multiple elements, including comparisons of different coupling terms and entropy stable fluxes.  Finally, Section~\ref{sec:num} presents numerical results which verify the high order accuracy and discrete entropy conservation of the proposed methods {in one and two spatial dimensions.}  


\section{Entropy stability for systems of hyperbolic PDEs}
\label{sec:intro}


We will begin by reviewing continuous entropy theory.  We consider systems of nonlinear conservation laws in one dimension with $n$ variables
\begin{align}
\pd{\bm{u}}{t} + \pd{\bm{f(\bm{u})}}{x} &= 0, \qquad \bm{u}(x,t) = (u_1(x,t),\ldots,u_n(x,t)).
\label{eq:pde}
\end{align}
where the fluxes $f(\bm{u})$ are smooth functions of the vector of conservative variables $\bm{u}({x},t)$.  We are interested in systems for which there exists a convex entropy function $U(\bm{u})$ such that  
\begin{equation}
U''(\bm{u})\bm{A}(\bm{u}) = \LRp{U''(\bm{u}) \bm{A}(\bm{u})}^T, \qquad \LRp{\bm{A}(\bm{u})}_{ij} = \LRp{\pd{\bm{f}(\bm{u})}{{u}_j}}_i,
\label{eq:entropysym}
\end{equation}
where $\bm{A}(\bm{u})$ is the Jacobian matrix.  
For systems with convex entropy functions, one can define entropy variables $\bm{v} = U'(\bm{u})$.  The convexity of the $U(\bm{u})$ guarantees that the mapping between conservative and entropy variables is invertible.  

It can be shown (see, for example, \cite{mock1980systems}) that (\ref{eq:entropysym}) is equivalent to the existence of an entropy flux function $F(\bm{u})$ and entropy potential $\psi$ such that
\begin{equation}
\bm{v}^T \pd{\bm{f}}{\bm{u}} = \pd{F(\bm{u})}{\bm{u}}^T, \qquad \psi(\bm{v}) = \bm{v}^T\bm{f}(\bm{u}(\bm{v})) - F(\bm{u}(\bm{v})), \qquad \psi'(\bm{v}) = \bm{f}(\bm{u}(\bm{v})).
\end{equation}

When $\bm{u}$ is smooth, multiplying (\ref{eq:pde}) on the left by $\bm{v}^T = U'(\bm{u})^T$, applying the definition of the entropy flux and using the chain rule yields the conservation of entropy
\begin{equation}
\pd{U(\bm{u})}{t} + \pd{F(\bm{u})}{{x}} = 0.
\label{eq:consentropystrong}
\end{equation}
\noteTwo{We assume now that the domain is the interval $[-1,1]$.  Integrating (\ref{eq:consentropystrong}) over this interval and using the definition of the entropy potential then yields a statement of entropy conservation for smooth solutions
\begin{equation}
\int_{-1}^1 \pd{U(\bm{u})}{t} + \left.\LRp{\bm{v}^T\bm{f}(\bm{u}(\bm{v}))-\psi(\bm{v})}\right|_{-1}^1 = 0.  
\label{eq:consentropy}
\end{equation}}

More generally, it can be shown that physically relevant solutions of (\ref{eq:pde}) (defined as the limiting solution for an appropriately defined vanishing viscosity) satisfy the inequality
\begin{equation}
\pd{U(\bm{u})}{t} + \pd{F(\bm{u})}{{x}} \leq 0.  
\end{equation}
\noteTwo{Integrating over $[-1,1]$ then yields a more general statement of entropy inequality
\begin{equation}
\int_{-1}^1 \pd{U(\bm{u})}{t} + \left.\LRp{\bm{v}^T\bm{f}(\bm{u}(\bm{v}))-\psi(\bm{v})}\right|_{-1}^1 \leq 0.  
\label{eq:ineqentropy}
\end{equation}}
\noteTwo{The focus of this work is the construction of {high order polynomial DG} methods which satisfy a discrete analogue of the conservation of entropy (\ref{eq:consentropy}) and the dissipation of entropy (\ref{eq:ineqentropy}).}  

\section{Discrete differential operators and quadrature-based matrices}
\label{sec:ops}

\subsection{Mathematical assumptions and notations}

We begin with a $d$-dimensional reference element $\widehat{D}$ with boundary $\partial \hat{D}$.  
We denote the $i$th component of the outward normal vector on the boundary of the reference element $\partial \hat{D}$ as $\hat{n}_i$.  For this work, we assume that $\hat{n}_i$ is constant; i.e., that the faces of the reference element are planar, which is true for most commonly used reference elements in two and three dimensions \cite{chan2015gpu}.  

We define an approximation space using degree $N$ polynomials on the reference element.  In one dimension, this space is defined as
\begin{equation}
P^N\LRp{\widehat{D}} = \LRc{\widehat{{x}}^i, \quad \widehat{x} \in \widehat{D}, \quad 0\leq i \leq N}.
\end{equation}
In higher dimensions, the choice of approximation space depends on the type of element \cite{chan2015gpu}, but generally contains the space of total degree $N$ polynomials.  We denote the dimension of the approximation space $P^N$ as $N_p = {\rm dim}\LRp{P^N\LRp{\widehat{D}}}$ (with $N_p = N+1$ in one dimension).  

We define the $L^2$ norm and inner products over the reference element $\hat{D}$ and the surface of the reference element $\partial \hat{D}$
\begin{equation}
\LRp{\bm{u},\bm{v}}_{\hat{D}} = \int_{\hat{D}} \bm{u}\cdot\bm{v}\diff{\bm{x}} =  \int_{\widehat{D}} \bm{u}\cdot\bm{v} J^k \diff{\widehat{x}}, \qquad \nor{\bm{u}}^2_{\hat{D}} = (\bm{u},\bm{u})_{\hat{D}}, \qquad \LRa{\bm{u},\bm{v}}_{\partial \hat{D}} = \int_{\partial \hat{D}} \bm{u} \cdot \bm{v} \diff{\bm{x}},
\end{equation}


\subsection{{Interpolation and differentiation matrices}}
\label{sec:matrix}

In most implementations, integrals and $L^2$ inner products are approximated using a quadrature rule which is exact for polynomials of a certain degree.  This defines a discrete $L^2$ inner product, which in turn can be used to construct operators which obey a property resembling summation-by-parts for diagonal norm SBP operators.

{We now introduce several quadrature-based matrices for the $d$-dimensional reference element $\widehat{D}$, which we will use to construct matrix-vector formulations of DG methods.   Assuming $u(\bm{x}) \in P^N\LRp{\widehat{D}}$, it can be represented in some polynomial basis $\phi_i$ of degree $N$ and dimension $N_p$ in terms of the vector of coefficients $\bm{u}$
\begin{equation}
u(\bm{x}) = \sum_{j=1}^{N_p}\bm{u}_j \phi_j(\widehat{\bm{x}}), \qquad P^N\LRp{\widehat{D}} = {\rm span}\LRc{\phi_i(\widehat{x})}_{i=1}^{N_p}.
\end{equation}
The construction of these quadrature-based matrices utilizes $\phi_i$, as well as volume and surface quadrature rules with $N_q$ and $N^f_q$ points, respectively.  We make the following assumptions on the strength of the quadrature rules:
\begin{assumption}
The volume quadrature rule $\LRc{(\bm{x}_i, w_i)}_{i=1}^{N_q}$ exactly integrates polynomials of degree at least $(2N-1)$ on the reference element $\hat{D}$, and the surface quadrature $\LRc{(\bm{x}^f_i, w^f_i)}_{i=1}^{N^f_q}$ exactly integrates polynomials of at least degree $2N$ on the boundary of the reference element $\partial \hat{D}$.  
\label{ass:quad}
\end{assumption}
These assumptions impose a minimum strength of the quadrature rule; however, the polynomial degree for which these quadrature rules are accurate can be taken arbitrarily high.  These conditions are imposed to ensure that integration by parts holds for any two polynomials of degree $N$ when integrals are approximated using quadrature.\footnote{We note that these conditions are sufficient, but not always necessary.  For example, appropriate SBP operators can be constructed using tensor product Gauss-Legendre-Lobatto quadratures, though the GLL surface quadrature is only exact for degree $2N-1$ polynomials.  More general conditions can be formulated by requiring that integration by parts holds when volume and surface integrals are approximated using volume and surface quadrature rules.}

Let $\bm{W} \in \mathbb{R}^{N_q\times N_q}$ denote the diagonal matrix whose entries are quadrature weights
\begin{equation}
\bm{W}_{ij} = \begin{cases}
w_i & i=j\\
0 & \text{otherwise}.
\end{cases}
\end{equation}
We also define $\bm{W}_f$ as the diagonal matrix of surface quadrature weights.  We define the quadrature interpolation matrix $\bm{V}_q$ 
\begin{equation}
\LRp{\bm{V}_q}_{ij} = \phi_j(\bm{x}_i), \qquad 1 \leq j \leq N_p, \qquad 1 \leq i \leq N_q,
\end{equation}
which maps coefficients $\bm{u}$ to evaluations of $u$ at quadrature points
\begin{equation}
\bm{u}_q = \bm{V}_q \bm{u}, \qquad \LRp{\bm{u}_q}_i = u(x_i), \quad 1 \leq i \leq N_q.
\end{equation}
Let $\bm{V}_f$ denote the matrix which interpolates to boundary values
\footnote{
In one dimension, $\hat{D} = [-1,1]$, and the surface quadrature rule $(\bm{x}^f_i, w^f_i)$ consists only of boundary points $-1,1$, with normal directions $\hat{n} = \pm 1$ and both weights $w^f_1 = w^f_2 = 1$, such that $\bm{W}_f$ is simply the $2\times 2$ identity matrix.  In one dimension, $\bm{V}_f$ reduces to
\begin{equation}
\LRp{\bm{V}_f}_{1j} = \phi_j(-1), \qquad \LRp{\bm{V}_f}_{2j} = \phi_j(1), \qquad 1 \leq j \leq N_p.  
\end{equation}}
\begin{equation}
\noteTwo{\LRp{\bm{V}_f}_{ij} = \phi_j(\hat{\bm{x}}^f_i), \qquad 1 \leq j \leq N_p, \qquad 1 \leq i \leq N^f_q},
\end{equation}
Next, let ${\bm{D}}_i$ denote the differentiation matrix with respect to the $i$th coordinate, defined implicitly through
\begin{equation}
u(\hat{\bm{x}}) = \sum_{j=1}^{N_p} \bm{u}_j \phi_j(\hat{\bm{x}}), \qquad \pd{u}{\hat{\bm{x}}_i} = \sum_{j=1}^{N_p} \LRp{{\bm{D}}_i \bm{u}}_j\phi_j(\hat{\bm{x}})
\end{equation}
In other words, ${\bm{D}}_i$ maps basis coefficients of some polynomial $u \in P^N$ to coefficients of its $i$th derivative with respect to the reference coordinate $\hat{\bm{x}}$, and is sometimes referred to as a ``modal''\footnote{The term ``modal'' refers to bases which are not necessarily defined in terms of nodal points, and should not be confused with modal bases which are orthonormal with respect to an $L^2$ inner product.} differentiation matrix with respect to a general non-nodal basis \cite{hicken2016multidimensional}.  

\subsection{Quadrature-based projection matrices and lifting matrices}

Given $\bm{V}_q$, we introduce the mass matrix, whose entries are the evaluation of inner products of different basis functions using quadrature
\begin{equation}
\bm{M} = \bm{V}_q^T\bm{W}\bm{V}_q, \qquad \bm{M}_{ij} = \noteTwo{\sum_{k=1}^{N_q} w_k \phi_j(\hat{\bm{x}}_k)\phi_i(\hat{\bm{x}}_k) \approx \int_{\hat{D}}\phi_j\phi_i \diff{\hat{\bm{x}}} = \LRp{\phi_j,\phi_i}_{\hat{D}}}.
\end{equation}
The approximation in the formula for the mass matrix becomes an equality if the volume quadrature rule is exact for polynomials of degree $2N$.  The mass matrix is symmetric and positive definite under Assumption~\ref{ass:quad}, and is also referred to in the SBP literature as a ``norm'' matrix, with distinctions made between dense and diagonal norm matrices.  We do not make any distinctions between diagonal and dense $\bm{M}$ in this work.  

The mass matrix appears in the computation of the $L^2$ projection both when integrals are computed exactly and when integrals are computed using quadrature.  The continuous $L^2$ projection operator is defined as $\Pi_N: L^2\LRp{\hat{D}}\rightarrow P^N\LRp{\hat{D}}$ such that 
\begin{equation}
\LRp{\Pi_N f,v}_{\hat{D}} = \LRp{f,v}_{\hat{D}}, \qquad \forall v\in P^N\LRp{\hat{D}}.  
\label{eq:l2proj}
\end{equation}
In other words, $\Pi_N$ is the operator which maps an $L^2$ integrable function $f$ to a polynomial $\Pi_N f  \in P^N\LRp{\hat{D}}$.  Assuming some polynomial basis \noteTwo{$\phi_j(\hat{\bm{x}})$} for $P^N$, the $L^2$ projection reduces to the determination of coefficients of $\Pi_N f$ in the \noteTwo{$\phi_j(\hat{\bm{x}})$}.  Additionally, when integrals within the $L^2$ inner products present in (\ref{eq:l2proj}) are computed using quadrature, the \noteTwo{discrete quadrature-based} $L^2$ projection of a function $f(x)$ (\ref{eq:l2proj}) can be reduced to the following matrix problem:
\begin{equation}
\bm{M} \bm{u} = \bm{V}_q^T\bm{W}\bm{f}, \qquad \bm{f}_i = f(\hat{\bm{x}}_i), \quad i = 1,\ldots,N_q.
\end{equation}
where $u$ is the vector of coefficients of the \noteTwo{quadrature-based} $L^2$ projection of $\bm{f}$.  Inverting the mass matrix allows us to define the quadrature-based $L^2$ projection matrix $\bm{P}_q$ as a discretization of the $L^2$ projection operator $\Pi_N$
\begin{equation}
\bm{P}_q = \bm{M}^{-1}\bm{V}_q^T\bm{W}.
\end{equation}
The matrix $\bm{P}_q$ which maps a function (in terms of its evaluation at quadrature points) to coefficients of the $L^2$ projection in the basis $\phi_j(\hat{\bm{x}})$.  Note that, since $\bm{M} = \bm{V}_q^T\bm{W}\bm{V}_q$, 
\begin{equation}
\bm{P}_q\bm{V}_q = \bm{M}^{-1}\bm{V}_q^T\bm{W}\bm{V}_q = \bm{I}.  
\label{eq:pqvq}
\end{equation}
In other words, the projection operator reduces to the identity matrix when applied to any linear combination of basis functions $\phi_j(\hat{\bm{x}})$ evaluated at quadrature points.  This implies that, when applied to the evaluation of any polynomial at volume quadrature points, the matrix $\bm{P}_q$ simply returns the coefficients of the polynomial in the basis $\phi_j(\hat{\bm{x}})$.\footnote{It is worth noting that $\bm{V}_q\bm{P}_q \neq \bm{I}$ in general.  However, if both matrices are square such that $N_p = N_q$ and the number of basis functions coincides with the number of quadrature points, then the mass matrix inverse can be explicitly written as $\bm{V}_q^{-T}\bm{W}^{-1}\bm{V}_q^{-1}$, and we can simplify $\bm{V}_q\bm{P}_q = \bm{V}_q\bm{M}^{-1}\bm{V}_q^T\bm{W} = \bm{I}$.  When $N_q > N_p$, the matrix $\bm{V}_q$ cannot be inverted and the mass matrix does not have an explicit inverse in terms of $\bm{V}_q$. }

We also introduce the quadrature-based lifting matrix 
\begin{equation}
\bm{L}_q = \bm{M}^{-1}\bm{V}_f^T \bm{W}_f,
\end{equation}
which ``lifts'' a function (evaluated at surface quadrature points) from the boundary of an element to coefficients of a basis defined in the interior of the element.  This is a quadrature-based discretization of the lifting operator  ${L}: L^2\LRp{\partial \hat{D}} \rightarrow P^N$ \cite{hesthaven2007nodal, di2011mathematical}
\begin{equation}
\LRp{L u,v}_{\hat{D}} = \LRa{u,v}_{\partial \hat{D}}, \qquad \forall v \in P^N.
\end{equation}


\subsection{Quadrature-based differentiation matrices and a ``decoupled'' SBP-like operator}

The projection and lifting matrices map between function values at volume or surface quadrature points and approximations which can be represented in the basis $\LRc{\phi_i}_{i=1}^{N_p}$.  By combining them with the polynomial differentiation matrix ${\bm{D}}_i$, we can construct a differencing operator for functions defined at quadrature points.  For example, given function evaluations at volume quadrature points, we can project the function to $P^N$ using $\bm{P}_q$, differentiate the resulting polynomial, and evaluate the result at quadrature points.  This sequence of operations can be concisely expressed as the product of three matrices
\begin{equation}
\bm{D}^i_q = \bm{V}_q \bm{D}_i \bm{P}_q.  
\label{eq:Dq}
\end{equation}
Define $\diag{\bm{u}}$ as the diagonal matrix with the entries of $\bm{u}$ on the diagonal.  
The quadrature-based differentiation matrix $\bm{D}^i_q$ obeys the following lemma:
\begin{lemma}
The operator $\bm{D}^i_q = \bm{V}_q \bm{D}_i \bm{P}_q$ satisfies the SBP property with respect to the diagonal matrix $\bm{W}$ such that
\begin{equation}
\bm{W}\bm{D}^i_q + \LRp{\bm{W}\bm{D}^i_q}^T = \bm{P}_q^T\bm{V}_f^T \bm{W}_f{\rm diag}(\hat{\bm{n}}_i) \bm{V}_f\bm{P}_q.
\end{equation}
Additionally, $\bm{D}^i_q$ is a degree $N$ approximation to the derivative $\pd{}{\hat{\bm{x}}_i}$.  
\label{lemma:qsbp}
\end{lemma}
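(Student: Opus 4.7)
The plan is to establish the SBP identity by first rewriting $\bm{W}\bm{D}^i_q$ in a symmetric form and then computing $\bm{M}\bm{D}_i+\bm{D}_i^T\bm{M}$ exactly via integration by parts. Using the definition $\bm{P}_q = \bm{M}^{-1}\bm{V}_q^T\bm{W}$, I would first note the identity $\bm{V}_q^T\bm{W} = \bm{M}\bm{P}_q$, equivalently $\bm{W}\bm{V}_q = \bm{P}_q^T \bm{M}$, so that
\begin{equation}
\bm{W}\bm{D}^i_q = \bm{W}\bm{V}_q \bm{D}_i \bm{P}_q = \bm{P}_q^T \bm{M} \bm{D}_i \bm{P}_q.
\end{equation}
This conjugates the problem: the SBP identity reduces to computing $\bm{M}\bm{D}_i + \bm{D}_i^T\bm{M}$ and showing it equals $\bm{V}_f^T \bm{W}_f\,\diag{\hat{\bm{n}}_i}\,\bm{V}_f$.

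Next, I would compute $(\bm{M}\bm{D}_i)_{kj}$ by unfolding the definitions. Since $\bm{D}_i$ sends $\phi_j$ to $\partial \phi_j/\partial \hat{\bm{x}}_i$ in the basis, the entry becomes $\sum_m w_m \phi_k(\hat{\bm{x}}_m)\,\partial \phi_j/\partial \hat{\bm{x}}_i(\hat{\bm{x}}_m)$, which is the volume quadrature approximation of the integral $\int_{\hat{D}} \phi_k\,\partial \phi_j/\partial \hat{\bm{x}}_i \diff{\hat{\bm{x}}}$. The integrand has degree at most $2N-1$, so by Assumption~\ref{ass:quad} the quadrature is exact and $(\bm{M}\bm{D}_i)_{kj}$ equals this integral. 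Adding the transpose and applying the (exact) continuous integration by parts identity for polynomials gives
\begin{equation}
(\bm{M}\bm{D}_i + \bm{D}_i^T\bm{M})_{kj} = \int_{\partial\hat{D}} \phi_k \phi_j\,\hat{n}_i \diff{\bm{x}}.
\end{equation}
The integrand on the boundary has degree at most $2N$, so the surface quadrature (again by Assumption~\ref{ass:quad}) evaluates this exactly as $\sum_m w^f_m \phi_k(\hat{\bm{x}}^f_m)(\hat{n}_i)_m \phi_j(\hat{\bm{x}}^f_m) = (\bm{V}_f^T\bm{W}_f\diag{\hat{\bm{n}}_i}\bm{V}_f)_{kj}$. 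Conjugating by $\bm{P}_q$ then yields the claimed identity.

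For the second assertion, I would show that $\bm{D}^i_q$ reproduces exact derivatives of degree $N$ polynomials sampled at volume quadrature points. Given any $u\in P^N$ with coefficient vector $\bm{u}$, apply $\bm{D}^i_q$ to the vector of quadrature values $\bm{V}_q \bm{u}$: using the identity $\bm{P}_q\bm{V}_q = \bm{I}$ from (\ref{eq:pqvq}), we get $\bm{D}^i_q \bm{V}_q \bm{u} = \bm{V}_q \bm{D}_i \bm{P}_q \bm{V}_q \bm{u} = \bm{V}_q \bm{D}_i \bm{u}$, which is precisely the vector of values of $\partial u/\partial \hat{\bm{x}}_i$ at the quadrature nodes. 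Hence $\bm{D}^i_q$ differentiates all polynomials up to degree $N$ exactly at the quadrature points.

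The most delicate step is the middle one, where the two distinct quadrature exactness hypotheses must be invoked at exactly the right moments: the volume quadrature accuracy of $2N-1$ is needed so that the quadrature-defined mass matrix interacts correctly with $\bm{D}_i$ (even though it does not exactly represent the continuous mass matrix), and the surface quadrature accuracy of $2N$ is needed to express the exact boundary term in the factored form $\bm{V}_f^T\bm{W}_f\diag{\hat{\bm{n}}_i}\bm{V}_f$. The rest of the argument is routine matrix algebra.
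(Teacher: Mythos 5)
Your proposal is correct and takes essentially the same approach as the paper: you establish the modal SBP relation $\bm{M}\bm{D}_i + \bm{D}_i^T\bm{M} = \bm{V}_f^T\bm{W}_f\,{\rm diag}(\hat{\bm{n}}_i)\,\bm{V}_f$ through quadrature-exact integration by parts, conjugate by $\bm{P}_q$ using $\bm{W}\bm{V}_q = \bm{P}_q^T\bm{M}$, and prove accuracy via $\bm{P}_q\bm{V}_q = \bm{I}$, exactly as in the paper's proof. The only difference is cosmetic: you verify the modal identity entry-by-entry, where the paper asserts it directly as a restatement of integration by parts for polynomials under Assumption~1.
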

\begin{proof}
We first note that the ``modal'' differentiation matrix $\bm{D}_i$ satisfies the following property with respect to the mass matrix
\begin{equation}
\bm{M}\bm{D}_i  + \bm{D}_i^T\bm{M} = \bm{V}_f^T \bm{W}_f{\rm diag}(\hat{\bm{n}}_i) \bm{V}_f.
\label{eq:modalsbp}
\end{equation}
This is simply a restatement of integration by parts for polynomials \cite{hesthaven2007nodal, chan2015gpu, hicken2016multidimensional} 
\begin{equation}
\int_{\hat{D}} \pd{\phi_j}{\hat{\bm{x}}_i}\phi_i \diff{\hat{\bm{x}}} = 
\int_{\partial \hat{D}} \phi_j\phi_i \hat{n}_i \diff{\hat{\bm{x}}} - \int_{\hat{D}} \phi_j \pd{\phi_i}{\hat{\bm{x}}_i} \diff{\hat{\bm{x}}} 
\end{equation}
using the fact that the above integrals are exact for volume quadratures of degree $2N-1$ and surface quadratures of degree $2N$, which we have from Assumption~\ref{ass:quad}.  Multiplying equation (\ref{eq:modalsbp}) by $\bm{P}_q^T$ on the left, $\bm{P}_q$ on the right, and using $\bm{P}_q = \bm{M}^{-1}\bm{V}_q^T\bm{W}$ yields
\begin{align}
\bm{P}_q^T\bm{M}\bm{D}_i\bm{P}_q  + \bm{P}_q^T\bm{D}_i^T\bm{M}\bm{P}_q^T &= \bm{W}\bm{V}_q\bm{D}_i\bm{P}_q
+ \bm{P}_q^T\bm{D}_i^T\bm{V}_q^T\bm{W} \nonumber\\
&= \bm{W}\bm{D}^i_q + \LRp{\bm{W}\bm{D}^i_q}^T  = \bm{P}_q^T\bm{V}_f^T \bm{W}_f{\rm diag}(\hat{\bm{n}}_i) \bm{V}_f\bm{P}_q.
\end{align}
The accuracy of $\bm{D}^i_q$ results from the fact that $\bm{D}^i_q$ recovers the exact derivative of polynomials up to degree $N$.  Let $\bm{u}_q$ be the values of some polynomial $u \in P^N$ at quadrature points, such that $\bm{u}_q = \bm{V}_q\bm{u}$ for coefficients $\bm{u}$.  Then by (\ref{eq:pqvq}),
\begin{equation}
\bm{D}^i_q\bm{u}_q = \bm{V}_q\bm{D}_i\bm{P}_q\bm{V}_q\bm{u} = \bm{V}_q\bm{D}_i\bm{u}
\end{equation}
which is the evaluation of the exact derivative of $u(\bm{x})$ at quadrature points.
\end{proof}
Lemma~\ref{lemma:qsbp} shows how the projection matrix $\bm{P}_q$ can transform a ``modal'' SBP property (involving the norm matrix $\bm{M}$, which can be dense) to a quadrature-based SBP property involving the diagonal norm matrix $\bm{W}$.  The boundary term in the resulting SBP property includes the matrix $\bm{V}_f\bm{P}_q$, which can be interpreted as taking the projection of a function (defined through values at volume quadrature points) and evaluating the result at surface quadrature points.  

We can also use the quadrature-based differentiation operator $\bm{D}^i_q$ to define a ``decoupled'' operator $\bm{D}^i_N$  which maps from a vector of both volume and surface quadrature points to values at both volume and surface quadrature points
\begin{align}
\bm{D}^i_N  &= \LRs{
\begin{array}{cc}
\bm{D}^i_q - \frac{1}{2}\bm{V}_q \bm{L}_q {\rm diag}(\widehat{\bm{n}}_i) \bm{V}_f\bm{P}_q &  \frac{1}{2}\bm{V}_q\bm{L}_q{\rm diag}(\widehat{\bm{n}}_i)\\
-\frac{1}{2}{\rm diag}(\hat{\bm{n}}_i)\bm{V}_f\bm{P}_q & \frac{1}{2}{\rm diag}(\widehat{\bm{n}}_i)
\end{array}},
\label{eq:DN}
\end{align}
where $\hat{\bm{n}}_i$ is the vector containing the $i$th normal component $\hat{n}_i$ evaluated at surface quadrature points.  

Let $\bm{W}_N$ be defined as the matrix of both volume and surface quadrature weights
\begin{equation}
\bm{W}_N = \LRp{
\begin{array}{cc}
\bm{W} & \\
& \bm{W}_f
\end{array}
}.
\label{eq:WN}
\end{equation}
We can show that the ``weak'' or integrated version of the differentiation matrix $\bm{D}^i_N$ satisfies the following properties:
\begin{theorem}
The matrix $\bm{Q}^i_N = \bm{W}_N \bm{D}^i_N$ satisfies the SBP-like property
\begin{equation}
\bm{Q}^i_N + \LRp{\bm{Q}^i_N}^T = \bm{B}^i_N, \qquad \bm{B}^i_N = 
\LRp{\begin{array}{cc}
0 & \\
& \bm{W}_f {\rm diag}(\hat{\bm{n}}_i)
\end{array}}.
\end{equation}
Additionally, $\bm{Q}^i_N\bm{1} = 0$, where $\bm{1}$ is the vector of all ones.  
\label{thm:sbp}
\end{theorem}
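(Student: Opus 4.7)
The plan is to verify the two claims separately, relying on the block structure of $\bm{D}^i_N$ together with the SBP-like identity of Lemma~\ref{lemma:qsbp}. I would first expand $\bm{Q}^i_N = \bm{W}_N\bm{D}^i_N$ in $2\times 2$ block form, then compute $\bm{Q}^i_N + \LRp{\bm{Q}^i_N}^T$ block by block, and finally check $\bm{D}^i_N\bm{1} = 0$, which implies $\bm{Q}^i_N\bm{1}=0$ since $\bm{W}_N$ is nonsingular.

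For the top-left block, I would use two key identities. First, Lemma~\ref{lemma:qsbp} immediately gives $\bm{W}\bm{D}^i_q + \LRp{\bm{W}\bm{D}^i_q}^T = \bm{P}_q^T\bm{V}_f^T\bm{W}_f\diag{\hat{\bm{n}}_i}\bm{V}_f\bm{P}_q$. Second, substituting $\bm{P}_q = \bm{M}^{-1}\bm{V}_q^T\bm{W}$ and $\bm{L}_q = \bm{M}^{-1}\bm{V}_f^T\bm{W}_f$ and using the symmetry of $\bm{M}$ and $\bm{W}$, I obtain $\bm{W}\bm{V}_q\bm{L}_q = \bm{P}_q^T\bm{V}_f^T\bm{W}_f$. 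Combined with the fact that the two diagonal matrices $\bm{W}_f$ and $\diag{\hat{\bm{n}}_i}$ commute, these identities reduce the symmetrized top-left block to zero. The off-diagonal blocks are handled by the same identity $\bm{W}\bm{V}_q\bm{L}_q = \bm{P}_q^T\bm{V}_f^T\bm{W}_f$, which makes the top-right block of $\bm{Q}^i_N$ exactly the negative transpose of the bottom-left block, so they cancel in the symmetrization. Finally, the bottom-right block $\frac{1}{2}\bm{W}_f\diag{\hat{\bm{n}}_i}$ is already diagonal, so symmetrization simply doubles it, producing the claimed form of $\bm{B}^i_N$.

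For the consistency identity, let $\bm{1}_q$ and $\bm{1}_f$ denote the volume and surface quadrature evaluations of the constant polynomial $1$. By the accuracy statement of Lemma~\ref{lemma:qsbp}, $\bm{D}^i_q\bm{1}_q = 0$; and by $\bm{P}_q\bm{V}_q = \bm{I}$ from (\ref{eq:pqvq}), projecting $\bm{1}_q$ and re-evaluating at surface nodes gives $\bm{V}_f\bm{P}_q\bm{1}_q = \bm{1}_f$. Plugging these into the definition (\ref{eq:DN}), both block rows of $\bm{D}^i_N\bm{1}$ reduce to $-\frac{1}{2}\bm{V}_q\bm{L}_q\diag{\hat{\bm{n}}_i}\bm{1}_f + \frac{1}{2}\bm{V}_q\bm{L}_q\diag{\hat{\bm{n}}_i}\bm{1}_f = 0$ and the analogous surface cancellation.

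The main obstacle is bookkeeping rather than conceptual: the correction term $\frac{1}{2}\bm{V}_q\bm{L}_q\diag{\hat{\bm{n}}_i}\bm{V}_f\bm{P}_q$ appearing in the top-left block of $\bm{D}^i_N$ is precisely engineered so that, after multiplication by $\bm{W}$ and symmetrization, it removes the entire boundary contribution coming from Lemma~\ref{lemma:qsbp} while leaving the bottom-right boundary term intact. Recognizing this pairing through the identity $\bm{W}\bm{V}_q\bm{L}_q = \bm{P}_q^T\bm{V}_f^T\bm{W}_f$ is the decisive step; once that is in place, the remaining block manipulations are routine.
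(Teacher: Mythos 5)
Your proposal is correct and follows essentially the same route as the paper's proof: a block-by-block symmetrization of $\bm{Q}^i_N$ in which the off-diagonal blocks cancel via the identity $\bm{W}\bm{V}_q\bm{L}_q = \bm{P}_q^T\bm{V}_f^T\bm{W}_f$, the top-left block vanishes by combining Lemma~\ref{lemma:qsbp} with the symmetry of $\bm{W}\bm{V}_q\bm{L}_q\,\diag{\hat{\bm{n}}_i}\bm{V}_f\bm{P}_q$, and consistency follows from $\bm{D}^i_q\bm{1} = 0$ together with $\bm{V}_f\bm{P}_q\bm{1} = \bm{1}$. There are no gaps; your closing observation about the role of the boundary correction term matches the paper's reasoning exactly.
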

\begin{proof}
The matrix $\bm{Q}^i_N$ is given explicitly as
\begin{align}
\bm{Q}^i_N  &= \LRs{
\begin{array}{cc}
\bm{W}\bm{D}^i_q - \frac{1}{2}\bm{W}\bm{V}_q \bm{L}_q {\rm diag}(\widehat{\bm{n}}_i) \bm{V}_f\bm{P}_q &  \frac{1}{2}\bm{W}\bm{V}_q\bm{L}_q{\rm diag}(\widehat{\bm{n}}_i)\\
-\frac{1}{2}\bm{W}_f{\rm diag}(\widehat{\bm{n}})\bm{V}_f\bm{P}_q & \frac{1}{2}\bm{W}_f{\rm diag}(\widehat{\bm{n}}_i)
\end{array}}.
\end{align}
The bottom right block of $\bm{Q}^i_N + \LRp{\bm{Q}^i_N}^T$ is $\bm{W}_f{\rm diag}(\widehat{\bm{n}}_i)$, as both $\bm{W}_f$ and ${\rm diag}(\widehat{\bm{n}}_i)$ are diagonal and symmetric.  We will show that all remaining blocks of $\bm{Q}^i_N + \LRp{\bm{Q}^i_N}^T$ are zero.  We first deal with the off-diagonal blocks, which are equal to
\begin{equation}
\frac{1}{2}\bm{W}\bm{V}_q\bm{L}_q{\rm diag}(\widehat{\bm{n}}_i) - \frac{1}{2}\LRp{\bm{W}_f{\rm diag}(\widehat{\bm{n}})\bm{V}_f\bm{P}_q}^T
\end{equation}
and its transpose.  These off-diagonal blocks reduce to zero by noting that
\begin{align}
\bm{W}\bm{V}_q\bm{L}_q{\rm diag}(\widehat{\bm{n}}_i) &= \bm{W}\bm{V}_q\bm{M}^{-1}\bm{V}_f \bm{W}_f {\rm diag}(\widehat{\bm{n}}_i)\\
&=  \bm{P}_q^T \bm{V}_f \bm{W}_f {\rm diag}(\widehat{\bm{n}}_i)= \LRp{\bm{W}_f{\rm diag}(\widehat{\bm{n}})\bm{V}_f\bm{P}_q}^T.\nonumber
\end{align}

To show the top left block of $\bm{Q}^i_N +\LRp{\bm{Q}^i_N}^T$ is zero, we first use the fact that $\bm{L}_q = \bm{M}^{-1}\bm{V}_f^T\bm{W}_f$ to rewrite Lemma~\ref{lemma:qsbp} as
\begin{align}
\bm{W}\bm{D}^i_q &= \bm{P}_q^T\bm{V}_f^T \bm{W}_f{\rm diag}(\hat{\bm{n}}_i) \bm{V}_f\bm{P}_q - \LRp{\bm{W}\bm{D}^i_q}^T \nonumber\\
&= \bm{W} \bm{V}_q \bm{M}^{-1}\bm{V}_f^T \bm{W}_f {\rm diag}(\hat{\bm{n}}_i) \bm{V}_f\bm{P}_q - \LRp{\bm{W}\bm{D}^i_q}^T = \bm{W} \bm{V}_q \bm{L}_q {\rm diag}(\hat{\bm{n}}_i) \bm{V}_f\bm{P}_q - \LRp{\bm{W}\bm{D}^i_q}^T.
\label{eq:thmsbp}
\end{align}
Next, we show that
\begin{equation}
\bm{W} \bm{V}_q \bm{L}_q {\rm diag}(\hat{\bm{n}}_i) \bm{V}_f\bm{P}_q = \bm{W} \bm{V}_q\bm{M}^{-1} \bm{V}_f^T\bm{W}_f {\rm diag}(\hat{\bm{n}}_i) \bm{V}_f\bm{M}^{-1}\bm{V}_q^T\bm{W},
\label{eq:thmsym}
\end{equation}
from which we can see that $\bm{W} \bm{V}_q \bm{L}_q {\rm diag}(\hat{\bm{n}}_i) \bm{V}_f\bm{P}_q$ is symmetric.  Combining (\ref{eq:thmsbp}) and (\ref{eq:thmsym}), we have that the top left block of $\bm{Q}^i_N +\LRp{\bm{Q}^i_N}^T$ is also zero
\begin{align}
&\bm{W}\bm{D}^i_q - \frac{1}{2}\bm{W}\bm{V}_q \bm{L}_q {\rm diag}(\widehat{\bm{n}}_i) \bm{V}_f\bm{P}_q + \LRp{\bm{W}\bm{D}^i_q - \frac{1}{2}\bm{W}\bm{V}_q \bm{L}_q {\rm diag}(\widehat{\bm{n}}_i) \bm{V}_f\bm{P}_q}^T \nonumber\\
&= \bm{W}\bm{D}^i_q - \bm{W}\bm{V}_q \bm{L}_q {\rm diag}(\widehat{\bm{n}}_i) \bm{V}_f\bm{P}_q + \LRp{\bm{W}\bm{D}^i_q}^T = 0.
 \end{align}
 
 Showing $\bm{Q}^i_N \bm{1} = 0$ is equivalent to showing $\bm{D}^i_N\bm{1} = 0$.  Direct multiplication gives
 \begin{align}
\bm{D}^i_N \bm{1}
&= \LRs{
\begin{array}{c}
\bm{D}^i_q\bm{1} - \frac{1}{2}\bm{V}_q \bm{L}_q {\rm diag}(\widehat{\bm{n}}_i) \bm{V}_f\bm{P}_q \bm{1} + \frac{1}{2}\bm{V}_q\bm{L}_q{\rm diag}(\widehat{\bm{n}}_i)\bm{1} \\
-\frac{1}{2}{\rm diag}(\hat{\bm{n}}_i)\bm{V}_f\bm{P}_q\bm{1} + \frac{1}{2}{\rm diag}(\widehat{\bm{n}}_i)
\end{array}} = 0.
\end{align}
Here, we have used that (\ref{eq:pqvq}) implies $\bm{V}_f\bm{P}_q\bm{1} = \bm{1}$, and that $\bm{D}^i_q\bm{1} = 0$ by Lemma~\ref{lemma:qsbp}.  
\end{proof}


\subsection{Discrete differentiation operators and approximating weighted derivatives}

While $\bm{D}^i_N$ satisfies an SBP-like property, it cannot be used directly as a differentiation operator.  To see this, let $\bm{u}$ denote the coefficients of some function $u \in P^N$.  Then, $\bm{V}_q \bm{u}$ and $\bm{V}_f \bm{u}$ are the evaluations of $u(\bm{x})$ at volume and surface quadrature points, respectively.  We expect a high order accurate derivative operator to exactly differentiate polynomials.  However, applying $\bm{D}^i_N$ to the concatenated vector containing evaluations at both volume and surface quadrature points yields
\begin{align}
\bm{D}^i_N \LRs{\begin{array}{c}\bm{V}_q\bm{u}\\ \bm{V}_f\bm{u}\end{array}}
&= \LRs{
\begin{array}{cc}
\bm{D}^i_q - \frac{1}{2}\bm{V}_q \bm{L}_q {\rm diag}(\widehat{\bm{n}}_i) \bm{V}_f\bm{P}_q &  \frac{1}{2}\bm{V}_q\bm{L}_q{\rm diag}(\widehat{\bm{n}}_i)\\
-\frac{1}{2}{\rm diag}(\hat{\bm{n}}_i)\bm{V}_f\bm{P}_q & \frac{1}{2}{\rm diag}(\widehat{\bm{n}}_i)
\end{array}}
\LRs{\begin{array}{c}\bm{V}_q\\ \bm{V}_f\end{array}}\bm{u} \nonumber\\
&= \LRs{
\begin{array}{c}
\bm{V}_q\bm{D}_i\bm{P}_q\bm{V}_q - \frac{1}{2}\bm{V}_q \bm{L}_q {\rm diag}(\widehat{\bm{n}}_i) \bm{V}_f\bm{P}_q\bm{V}_q +  \frac{1}{2}\bm{V}_q\bm{L}_q{\rm diag}(\widehat{\bm{n}}_i) \bm{V}_f\\
-\frac{1}{2}{\rm diag}(\hat{\bm{n}}_i)\bm{V}_f\bm{P}_q\bm{V}_q + \frac{1}{2}{\rm diag}(\widehat{\bm{n}}_i) \bm{V}_f
\end{array}}\bm{u} = \LRs{
\begin{array}{c}
\bm{V}_q\bm{D}_i\bm{u} \\
0
\end{array}}.
\end{align}
where have used that $\bm{P}_q\bm{V}_q = \bm{I}$ from (\ref{eq:pqvq}).  This indicates that the rows of $\bm{D}^i_N$ corresponding to volume quadrature points recover the exact derivatives of a polynomial, but that the rows of $\bm{D}^i_N$ corresponding to surface quadrature points return zero when applied to a polynomial.  Thus, while $\bm{D}^i_N$ has an SBP-like property, it is not an SBP operator due to the fact that it is not a high order accurate approximation of the derivative \cite{fernandez2014generalized}.  

However, while $\bm{D}^i_N$ by itself is not a differentiation operator, we can recover the polynomial differentiation operator $\bm{D}_i$ by contracting the output of $\bm{D}^i_N$ with projection and lifting matrices.  Straightforward computations show that
\begin{equation}
\bm{D}_i = \LRs{\begin{array}{cc}
\bm{P}_q & \bm{L}_q\end{array}} \bm{D}^i_N \LRs{\begin{array}{c}
\bm{V}_q\\ \bm{V}_f\end{array}}.
\label{eq:Drecovery}
\end{equation}
Motivated by this fact, let $u(\bm{x})$ be a non-polynomial function, and let $[\bm{u}_q, \bm{u}_f]^T$ be the vector whose entries $\bm{u}_q, \bm{u}_f$ are the evaluations of $u$ at volume and surface quadrature points respectively.  A polynomial approximation of the derivative of $u$ can be computed by first applying $\bm{D}^i_N$ to $[\bm{u}_q, \bm{u}_f]^T$, then applying the projection and lifting matrices to the result 
\begin{equation}
\tilde{\bm{u}} = \LRs{\begin{array}{cc}
\bm{P}_q & \bm{L}_q\end{array}} \bm{D}^i_N \LRs{\begin{array}{c}\bm{u}_q\\ \bm{u}_f\end{array}}, \qquad \sum_{j=1}^{N_p} \tilde{\bm{u}}_j \phi_j(\bm{x}) \approx \pd{u}{\hat{\bm{x}}_i}.
\end{equation}
In the above case, the application of the projection and lifting matrices can be combined with the application of $\bm{D}^i_N$ into a single matrix-vector product.  However, applying $\bm{P}_q$ and $\bm{L}_q$ separately from $\bm{D}^i_N$ makes it possible to approximate the product of a function and a function derivative while maintaining an SBP-like property.  Let $u(\hat{\bm{x}})$ and $w(\hat{\bm{x}})$ denote two non-polynomial functions, whose evaluations at volume and surface quadrature points are denoted $[\bm{u}_q, \bm{u}_f]^T$ and $[\bm{w}_q, \bm{w}_f]^T$, respectively.  \noteOne{The utility of the operator $\bm{D}^i_N$ is that, when combined with $\bm{P}_q,\bm{L}_q$, it can be used to construct some polynomial with coefficients $\tilde{\bm{u}}_w$ which} approximates the product $w(\hat{\bm{x}}) \pd{u}{\hat{\bm{x}}_i}$ 
\begin{equation}
\tilde{\bm{u}}_w = \LRs{\begin{array}{cc}
\bm{P}_q & \bm{L}_q\end{array}} {\rm diag}\LRp{\LRs{\begin{array}{c}\bm{w}_q\\ \bm{w}_f\end{array}}}\bm{D}^i_N \LRs{\begin{array}{c}\bm{u}_q\\ \bm{u}_f\end{array}}, \qquad  \sum_{j=1}^{N_p} (\tilde{\bm{u}}_w)_j \phi_j(\hat{\bm{x}}) \approx w(\hat{\bm{x}})\pd{u}{\hat{\bm{x}}_i}.
\label{eq:wdsbp}
\end{equation}
We note that the polynomial approximation resulting from (\ref{eq:wdsbp}) is equivalent to a quadrature discretization of the following approximation of $w(\bm{x})\pd{u}{\hat{\bm{x}}_i}$ involving the continuous $L^2$ projection and lifting operators $\Pi_N$ and $L$ 
\begin{equation}
w(\hat{\bm{x}})\pd{u}{\hat{\bm{x}}_i} \approx \Pi_N \LRp{ w(\hat{\bm{x}}) \LRp{\pd{\Pi_N u}{\hat{\bm{x}}_i} + L(u - \Pi_N u)}} + L\LRp{w(u-\Pi_N u)}.
\end{equation}
\noteOne{In the context of discretizations for (\ref{eq:pde}), (\ref{eq:wdsbp}) provides a way to approximate the spatial term involving the nonlinear flux functions when it is in non-conservative form.  The connection to the entropy conservative discretization of (\ref{eq:pde}) is made explicit using Burgers' equation as an example in \ref{appendix:A}.  }

}

\section{Entropy conservative DG methods on a single element}
\label{sec:ecdg}

In this section, we focus on constructing an entropy conservative DG scheme on a single element using the matrix operators defined in Section~\ref{sec:ops}.  
 We first prove that the proposed scheme is entropy conservative on a single one dimensional element, then generalize the proof to higher dimensions.  



\subsection{A continuous interpretation of flux differencing}

In this section, we discuss a continuous interpretation of flux differencing \cite{fisher2013high,gassner2017br1,chen2017entropy}, which also encompasses the split form methodology proposed in \cite{gassner2016split}.  This interpretation will guide the construction of entropy conservative DG schemes.  We first introduce the definition of an entropy conservative finite volume numerical flux given by Tadmor \cite{tadmor1987numerical}:
\begin{definition}
Let $\bm{f}_S(\bm{u}_L,\bm{u}_R)$ be a bivariate function which is symmetric and consistent with the flux function $\bm{f}(\bm{u})$
\begin{equation}
\bm{f}_S(\bm{u}_L,\bm{u}_R) = \bm{f}_S(\bm{u}_R,\bm{u}_L), \qquad \bm{f}_S(\bm{u},\bm{u}) = \bm{f}(\bm{u})
\end{equation}
The numerical flux $\bm{f}_S(\bm{u}_L, \bm{u}_R)$ is entropy conservative if, for entropy variables $\bm{v}_L = \bm{v}(\bm{u}_L), \bm{v}_R = \bm{v}(\bm{u}_R)$
\begin{equation}
\LRp{\bm{v}_L - \bm{v}_R}^T \bm{f}_S(\bm{u}_L,\bm{u}_R) = (\psi_L - \psi_R), \qquad \psi_L = \psi(\bm{v}(\bm{u}_L)), \quad \psi_R = \psi(\bm{v}(\bm{u}_R)).  
\end{equation}
Similarly, a flux $\bm{f}_S(\bm{u}_L, \bm{u}_R)$ is referred to as entropy stable if $\LRp{\bm{v}_L - \bm{v}_R}^T \bm{f}_S(\bm{u}_L,\bm{u}_R) \leq (\psi_L - \psi_R)$.
\label{def:tadmor}
\end{definition}
This numerical flux can be used to construct entropy conservative finite volume methods, and was generalized in \cite{fjordholm2012arbitrarily} for the construction of high order finite volume schemes.  This flux was later used for the construction of discretely entropy conservative schemes using an approach referred to as \emph{flux differencing} \cite{fisher2013high, carpenter2014entropy, gassner2017br1, chen2017entropy}.  The key factor enabling the construction of discretely entropy conservative schemes is that (unlike the continuous proof of entropy conservation), the proof of discrete entropy conservation avoids the use of the chain rule, which does not hold in general at the discrete level.  Entropy conservation can also be extended beyond a single element by defining interface fluxes between two elements using the entropy conservative flux $f_S(u_L,u_R)$ \cite{carpenter2014entropy, gassner2017br1, chen2017entropy}.  Similarly, employing an entropy stable flux at element interfaces results in an entropy stable method which satisfies a global entropy inequality.  


Let $\bm{f}_S(\bm{u}_L,\bm{u}_R)$ now be the symmetric and consistent two-point flux defined in (\ref{def:tadmor}).  Consider the bivariate function
\begin{equation}
\bm{f}_S\LRp{\bm{u}({x}),\bm{u}({y})}, \qquad x,{y} \in \mathbb{R}.
\end{equation}
{We consider an interpretation of flux differencing motivated by the derivative projection operator introduced by Gassner, Winters, Hindenlang, and Kopriva in \cite{gassner2017br1}}.  
\noteTwo{
Here, the two-point flux $\bm{f}_S = \bm{f}_S(\bm{u}_1,\bm{u}_2)$ is a bivariate function of $\bm{u}_1,\bm{u}_2$.  Then, using the consistency of $\bm{f}_S$, 
\begin{align}
\pd{\bm{f}(\bm{u})}{\bm{u}} &= \pd{\bm{f}_S(\bm{u},\bm{u})}{\bm{u}_1} = \LRl{\LRp{{\pd{\bm{f}_S(\bm{u}_1,\bm{u}_2)}{\bm{u}_1}} +  {\pd{\bm{f}_S(\bm{u}_1,\bm{u}_2)}{\bm{u}_2}}}}_{\bm{u}_1,\bm{u}_2 = \bm{u}} = \LRl{2\pd{\bm{f}_S(\bm{u}_1,\bm{u}_2)}{\bm{u}_1}}_{\bm{u}_1,\bm{u}_2 = \bm{u}},
\label{eq:chainrulefluxdiff}
\end{align}
where we have used the consistency of $\bm{f}_S$ in the first step and the symmetry of $\bm{f}_S$ in the last.  
Let $\bm{u}_1 = \bm{u}(x)$ and $\bm{u}_2 = \bm{u}(y)$, where $x, y$ are independent spatial coordinates over the domain.  Combining (\ref{eq:chainrulefluxdiff}) with the chain rule yields that
\begin{equation}
\pd{\bm{f}(\bm{u}(x))}{x} = 2\left.\pd{\bm{f}_S\LRp{\bm{u}(x),\bm{u}(y)}}{x}\right|_{y=x}.
\label{eq:fluxdiff}
\end{equation}
The accuracy of (\ref{eq:fluxdiff}) was shown using this same approach in \cite{chen2017entropy,crean2018entropy}.  
}
\noteOne{Flux differencing was first used to systematically recover split formulations \cite{gassner2016split}, which we describe in more detail in \ref{appendix:A} for Burgers' equation.  }

\subsection{{Quadrature-based operators and flux differencing}}
{
We first simplify the evaluation of (\ref{eq:fluxdiff}) using quadrature-based operators.  For simplicity of notation, we have dropped the superscript $i$ from one-dimensional operators.  The matrix $\bm{D}_q = \bm{V}_q \bm{D}\bm{P}_q$ is a discretization of the one-dimensional operator $\pd{}{\hat{x}}\Pi_N$, and maps from volume quadrature points to volume quadrature points.  Let ${f}_S(\bm{u}({x}),\bm{u}({y}))$ denote a scalar bivariate function.  Define the matrix $\bm{F}_S$ as the evaluation of ${f}_S(\bm{u}({x}),\bm{u}({y}))$ at quadrature points 
\begin{equation}
\LRp{\bm{F}_S}_{ij} = {f}_S(\bm{u}(\hat{x}_i),\bm{u}(\hat{x}_j)), \qquad 1\leq i,j \leq N_q.  
\end{equation}
The columns of the matrix-matrix product $\bm{D}_q\bm{F}_S$ then correspond to the projection and differentiation of the univariate function ${f}_S(\bm{u}(\hat{x}),\bm{u}(\hat{x}_j))$ for fixed quadrature points $\hat{x}_j$ for $j = 1,\ldots, N_q$.  Thus, evaluating (\ref{eq:fluxdiff}) at different quadrature points is equivalent to evaluating
\begin{equation}
\LRl{\pd{\Pi_N f_S(\bm{u}(\hat{x}),\bm{u}(\hat{y}))}{\hat{{x}}}}_{\hat{y}=\hat{x}_i} = \LRp{\bm{D}_q\bm{F}_S}_{ii} = \LRp{{\rm diag}\LRp{\bm{D}_q\bm{F}_S}}_i.
\label{eq:qfluxdiff}
\end{equation}
In other words, performing flux differencing and evaluating (\ref{eq:fluxdiff}) reduces to the computation of the diagonal entries of a matrix-matrix product when using quadrature-based matrices.  

We can further simplify (\ref{eq:qfluxdiff}) using the Hadamard product, which is defined as the matrix  operation $\circ$ such that
\begin{equation}
\LRp{\bm{A}\circ \bm{B}}_{ij} = \bm{A}_{ij} \bm{B}_{ij}, 
\end{equation}
where $\bm{A},\bm{B}$ are two matrices with the same row and column dimensions.  The Hadamard product obeys the following properties, whose proofs can be found in \cite{horn2012matrix}.
\begin{lemma}
Let $\bm{A},\bm{B},\bm{C}$ be square matrices of the same dimension.
\begin{enumerate}
\item The Hadamard product is commutative, with $\bm{A}\circ\bm{B} = \bm{B}\circ\bm{A}$.  
\item The Hadamard product is linear with respect to addition and the transpose operation
\begin{equation}
\LRp{\bm{A} + \bm{B}}\circ \bm{C} = \bm{A} \circ\bm{C} + \bm{B}\circ \bm{C}, \qquad  \LRp{\bm{A}\circ \bm{B}}^T = \bm{A}^T\circ \bm{B}^T.
\end{equation}
\item The Hadamard product is related to the ``$\rm diag$'' operation as follows:
\begin{equation}
\diag{\bm{A}\bm{B}^T} = \LRp{\bm{A}\circ \bm{B}}\bm{1}.
\end{equation}
where $\bm{1}$ is the vector of all ones.  
\end{enumerate}
\label{lemma:hadamard}
\end{lemma}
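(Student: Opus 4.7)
The plan is to verify each of the three claims by direct element-wise computation, since the Hadamard product is defined entrywise and each identity reduces to a statement about scalar entries. I would treat the three parts independently and in order, using only the definition $(\bm{A}\circ\bm{B})_{ij} = \bm{A}_{ij}\bm{B}_{ij}$ together with the elementary properties of scalar multiplication, addition, and matrix-matrix multiplication. No analytic machinery is needed; the challenge, if any, is organizational rather than mathematical.

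For part 1, I would write $(\bm{A}\circ\bm{B})_{ij} = \bm{A}_{ij}\bm{B}_{ij} = \bm{B}_{ij}\bm{A}_{ij} = (\bm{B}\circ\bm{A})_{ij}$ and conclude commutativity from the commutativity of scalar multiplication. For part 2, the distributivity identity follows from $((\bm{A}+\bm{B})\circ\bm{C})_{ij} = (\bm{A}_{ij}+\bm{B}_{ij})\bm{C}_{ij} = \bm{A}_{ij}\bm{C}_{ij} + \bm{B}_{ij}\bm{C}_{ij}$, and the transpose identity from $((\bm{A}\circ\bm{B})^T)_{ij} = (\bm{A}\circ\bm{B})_{ji} = \bm{A}_{ji}\bm{B}_{ji} = (\bm{A}^T)_{ij}(\bm{B}^T)_{ij}$. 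In both cases the proof reduces to a one-line manipulation of indices.

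For part 3, I would compute the $i$-th diagonal entry of $\bm{A}\bm{B}^T$ using the standard matrix product formula, obtaining $(\bm{A}\bm{B}^T)_{ii} = \sum_{k} \bm{A}_{ik}(\bm{B}^T)_{ki} = \sum_k \bm{A}_{ik}\bm{B}_{ik}$. I would then compute the $i$-th entry of the vector $(\bm{A}\circ\bm{B})\bm{1}$, which is the sum of the entries in row $i$ of $\bm{A}\circ\bm{B}$, giving $\sum_k \bm{A}_{ik}\bm{B}_{ik}$. Matching the two expressions entrywise establishes $\diag{\bm{A}\bm{B}^T} = (\bm{A}\circ\bm{B})\bm{1}$.

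Since each claim collapses to a short index calculation, there is no real obstacle. The only mild subtlety is notational: part 3 requires being careful to distinguish the ``$\diag$'' operator applied to a matrix (extracting its diagonal as a vector, consistent with its use in the surrounding text) from the variant that produces a diagonal matrix, but the surrounding convention in the paper makes the intended reading unambiguous.
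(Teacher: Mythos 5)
Your proof is correct in all three parts, and each index computation is accurate (in particular, $(\bm{A}\bm{B}^T)_{ii} = \sum_k \bm{A}_{ik}\bm{B}_{ik} = ((\bm{A}\circ\bm{B})\bm{1})_i$ for part 3). The paper itself gives no proof of this lemma --- it simply cites Horn and Johnson's \emph{Matrix Analysis} --- and the entrywise verifications you supply are precisely the standard arguments found there, so your proposal is a faithful self-contained filling-in of the deferred proof rather than a different route. Your closing caution about $\diag{\cdot}$ is also well taken: the paper earlier defines $\diag{\bm{u}}$ on a \emph{vector} (producing a diagonal matrix), whereas in part 3 the operator is applied to a matrix and must be read as extracting the diagonal into a vector for the identity to type-check, exactly as you interpret it.
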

Using part 2 of Lemma~\ref{lemma:hadamard} and the fact that $\bm{F}_S$ is symmetric, we rewrite the ``$\rm diag$'' operation as the row sum of a Hadamard product, or equivalently the multiplication of a Hadamard product with the vector of all ones
\begin{equation}
{\rm diag}\LRp{\bm{D}_q\bm{F}_S} = \LRp{\bm{D}_q\circ \bm{F}_S}\bm{1}.  
\label{eq:qfluxdiffhadamard}
\end{equation}
This yields a generalization of the derivative projection operator introduced in \cite{gassner2017br1}.


\subsection{{A one-dimensional entropy conservative DG method on a single element}}

Motivated by the observations in the previous section, we now construct a semi-discretely entropy conservative formulation on a single element in one spatial dimension based on the interpretation of flux differencing (\ref{eq:fluxdiff}) in the previous section.  This formulation involves projection and lifting matrices along with the decoupled SBP-like operator $\bm{D}_N$ used in (\ref{eq:wdsbp}).  We seek degree $N$ polynomial approximations $\bm{u}_N({x},t)$ to the conservative variables $\bm{u}({x},t)$, with coefficients $\bm{u}_h(t)$ such that
\begin{equation}
\bm{u}_N(\hat{\bm{x}},t) = \sum_{j=1}^{N_p} \LRp{\bm{u}_h(t)}_j \phi_j(\hat{\bm{x}}), \qquad \LRp{\bm{u}_h(t)}_j \in \mathbb{R}^n.
\end{equation}
Because $\bm{u}_h$ consists of vectors of coefficients for each scalar component of $\bm{u}_N(\hat{\bm{x}},t)$, from this point onward we understand the product of matrices applied to component-wise vectors like $\bm{u}_h$ in a Kronecker product sense (as in \cite{chen2017entropy}), e.g.\ $\bm{A} \bm{u}_h$ should be understood as applying $\bm{A}$ to each component of $\bm{u}_h$, or applying $\bm{A}\otimes\bm{I}$ (with $\bm{I}$ the ${n\times n}$ identity matrix) to the full vector $\bm{u}_h$.  

We now introduce $\bm{v}_h$ as the $L^2$ projection of the entropy variables and $\tilde{\bm{u}}$ as the evaluation (at volume and surface quadrature points) of the conservative variables in terms of the $L^2$ projected entropy variables.  
\begin{align}
&\bm{u}_q = \bm{V}_q\bm{u}_h,& &\bm{v}_q = \bm{v}\LRp{\bm{u}_q},&  &\bm{v}_h = \bm{P}_q \bm{v}_q,&\nonumber\\
&\tilde{\bm{v}} = \LRs{\begin{array}{c}
\tilde{\bm{v}}_q\\
\tilde{\bm{v}}_f
 \end{array}} = 
\LRs{\begin{array}{c}
 \bm{V}_q \\
 \bm{V}_f 
 \end{array}}\bm{v}_h,& &\tilde{\bm{u}} = \LRs{\begin{array}{c}
\tilde{\bm{u}}_q\\
\tilde{\bm{u}}_f
 \end{array}} = \bm{u}\LRp{\tilde{\bm{v}}}.&
\label{eq:uvvars}
\end{align}
Here, $\bm{u}_q, \bm{v}_q$ denote the conservative variables and entropy variables (as a function of the conservation variables) evaluated at volume quadrature points.  The vector $\tilde{\bm{v}}$ denotes the evaluations of the $L^2$ projection of the entropy variables at volume and surface quadrature points, while $\tilde{\bm{u}}$ denotes the evaluation of the conservative variables in terms of the projected entropy variables $\bm{u}\LRp{\Pi_N\bm{v}}$, which will be crucial to proving discrete conservation of entropy.  For the remainder of this paper, $\bm{u}\LRp{\Pi_N\bm{v}}$ and $\tilde{\bm{u}}$ will be referred to as entropy-projected conservative variables.  \note{We note that this approach closely resembles that of \cite{parsani2016entropy}, where entropy stable schemes were constructed for generalized SBP operators based on Gauss nodes.  In \cite{parsani2016entropy}, the flux is computed by first interpolating the entropy variables at Gauss nodes, then evaluating the conservative variables in terms of the interpolated entropy variables at a separate set of nodes. }

Let $\bm{f}^*$ be a numerical flux which is used for the imposition of boundary conditions.  Motivated by the quadrature discretization of flux differencing (\ref{eq:qfluxdiff}) and its reformulation using the Hadamard product (\ref{eq:qfluxdiffhadamard}), we introduce the semi-discrete formulation for $\bm{u}_h(t)$ as
\begin{align}
\td{\bm{u}_h}{t} &= - \LRs{\begin{array}{cc} 
\bm{P}_q & \bm{L}_q\end{array}} \LRp{2\bm{D}_N \circ \bm{F}_S} \bm{1} - \bm{L}_q\diag{\hat{\bm{n}}}\LRp{\bm{f}^* - \bm{f}(\tilde{\bm{u}}_f)},  \label{eq:oneelemformulation}\\
\LRp{\bm{F}_S}_{ij} &= \bm{f}_S\LRp{\tilde{\bm{u}}_{i},\tilde{\bm{u}}_{j}}, \qquad 1\leq i,j \leq N_q+N^f_q,
\nonumber
\end{align}
where $\bm{F}_S$ is a symmetric matrix (symmetry is a result of the symmetry condition of Definition~\ref{def:tadmor})
and $\tilde{\bm{u}}_{j}$ denotes the evaluation of the entropy-projected conservative variables at the $i$th quadrature point, where $i$ indexes into the combined set of both volume and surface quadrature points.  

We have the following theorem: 


\begin{theorem}
\noteTwo{Let $\bm{f}_S$ be an entropy conservative flux from Definition~\ref{def:tadmor}}.  Then, assuming continuity in time, the semi-discrete formulation defined by (\ref{eq:uvvars}) and (\ref{eq:oneelemformulation}) \noteTwo{satisfies}
\begin{equation}
\bm{1}^T \bm{W}\td{U(\bm{u}_q)}{t} = \bm{1}^T\bm{W}_f\diag{\hat{\bm{n}}}\LRp{\psi(\tilde{\bm{u}}_f) - \tilde{\bm{v}}_f^T\bm{f}^*}
\end{equation}
which is an approximation \noteTwo{of the statement of conservation of entropy (\ref{eq:consentropy}) involving numerical quadrature and the numerical flux $\bm{f}^*$}
\begin{equation}
\int_{\hat{D}} \pd{U(\bm{u}_N)}{t} \diff{\hat{x}}= \int_{\partial \hat{D}} \LRp{\psi\LRp{\bm{u}\LRp{\Pi_N\bm{v}}}-\LRp{\Pi_N \bm{v}}^T\bm{f}^*
}  \hat{n} \diff{\hat{x}}.
\end{equation}
\label{thm:ec1D}
\end{theorem}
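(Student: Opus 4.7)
The plan is to compute $\bm{1}^T\bm{W}\td{U(\bm{u}_q)}{t}$ directly from the semi-discrete equation and verify that the volume contributions telescope to a boundary integral. Applying the chain rule at each volume quadrature node gives $\bm{1}^T\bm{W}\td{U(\bm{u}_q)}{t} = \bm{v}_q^T\bm{W}\td{\bm{u}_q}{t}$, where $\bm{v}_q = \bm{v}(\bm{u}_q)$. Since $\td{\bm{u}_q}{t} = \bm{V}_q\td{\bm{u}_h}{t}$ and $\bm{V}_q^T\bm{W} = \bm{M}\bm{P}_q$ from the definition of $\bm{P}_q$, this rearranges to $\bm{v}_h^T\bm{M}\td{\bm{u}_h}{t}$, where $\bm{v}_h = \bm{P}_q\bm{v}_q$ is the $L^2$-projected entropy variables of (\ref{eq:uvvars}). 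The appearance of $\bm{v}_h$ here (rather than $\bm{v}_q$) is precisely the reason the scheme uses the entropy-projected conservative variables $\tilde{\bm{u}}$: the entropy variables that pair against the scheme are forced to be their projected values.

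Next, I would substitute (\ref{eq:oneelemformulation}) into $\bm{v}_h^T\bm{M}\td{\bm{u}_h}{t}$. Using $\bm{M}\bm{P}_q = \bm{V}_q^T\bm{W}$ and $\bm{M}\bm{L}_q = \bm{V}_f^T\bm{W}_f$, the prefactor combines with $[\bm{P}_q\;\;\bm{L}_q]$ to form $\bm{V}_N^T\bm{W}_N$ with $\bm{V}_N = [\bm{V}_q;\bm{V}_f]$. Since diagonal left-multiplication passes through the Hadamard product, $\bm{W}_N(2\bm{D}_N\circ\bm{F}_S) = 2\bm{Q}_N\circ\bm{F}_S$, and using $\bm{V}_N\bm{v}_h = \tilde{\bm{v}}$ the identity becomes
\begin{equation*}
\bm{1}^T\bm{W}\td{U(\bm{u}_q)}{t} = -\tilde{\bm{v}}^T(2\bm{Q}_N\circ\bm{F}_S)\bm{1} - \tilde{\bm{v}}_f^T\bm{W}_f\diag{\hat{\bm{n}}}\LRp{\bm{f}^* - \bm{f}(\tilde{\bm{u}}_f)}.
\end{equation*}

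The heart of the argument is to expand the Hadamard term as the double sum $\sum_{i,j}2(\bm{Q}_N)_{ij}\tilde{\bm{v}}_i^T\bm{f}_S(\tilde{\bm{u}}_i,\tilde{\bm{u}}_j)$ and split $2\bm{Q}_N = (\bm{Q}_N+\bm{Q}_N^T) + (\bm{Q}_N-\bm{Q}_N^T)$. By Theorem~\ref{thm:sbp}, $\bm{Q}_N+\bm{Q}_N^T = \bm{B}_N$ is diagonal and supported on the surface indices, so the symmetric piece collapses via consistency $\bm{f}_S(\bm{u},\bm{u}) = \bm{f}(\bm{u})$ to $\bm{1}^T\bm{W}_f\diag{\hat{\bm{n}}}\LRp{\tilde{\bm{v}}_f^T\bm{f}(\tilde{\bm{u}}_f)}$. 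For the antisymmetric piece, relabelling the dummy indices and invoking symmetry of $\bm{f}_S$ converts it to $\sum_{i,j}(\bm{Q}_N)_{ij}(\tilde{\bm{v}}_i-\tilde{\bm{v}}_j)^T\bm{f}_S(\tilde{\bm{u}}_i,\tilde{\bm{u}}_j)$, and Tadmor's condition from Definition~\ref{def:tadmor} reduces this to $\sum_{i,j}(\bm{Q}_N)_{ij}(\tilde{\psi}_i - \tilde{\psi}_j)$. Using $\bm{Q}_N\bm{1} = 0$ together with $\bm{Q}_N^T\bm{1} = \bm{B}_N\bm{1}$ (both from Theorem~\ref{thm:sbp}), this collapses to $-\bm{1}^T\bm{W}_f\diag{\hat{\bm{n}}}\psi(\tilde{\bm{u}}_f)$.

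Finally, combining these contributions with the boundary correction, the $\tilde{\bm{v}}_f^T\bm{f}(\tilde{\bm{u}}_f)$ terms cancel exactly, leaving the claimed $\bm{1}^T\bm{W}_f\diag{\hat{\bm{n}}}\LRp{\psi(\tilde{\bm{u}}_f) - \tilde{\bm{v}}_f^T\bm{f}^*}$. The main conceptual obstacle is recognizing that the projected entropy variables $\tilde{\bm{v}}$, rather than the nodal ones $\bm{v}_q$, must appear in $\bm{F}_S$ for the chain-rule step to close; this is not a free choice but is forced by $\bm{v}_h = \bm{P}_q\bm{v}_q$ emerging on the left-hand side, whose boundary trace is $\tilde{\bm{v}}_f = \bm{V}_f\bm{v}_h$. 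The main technical obstacle is careful block-Hadamard bookkeeping: each entry of $\bm{F}_S$ is an $n$-vector while each entry of $\bm{Q}_N$ is a scalar, so the Hadamard product acts blockwise and the contraction with $\tilde{\bm{v}}^T$ and $\bm{1}$ must be expanded componentwise before the symmetry swap and Tadmor identity can be applied.
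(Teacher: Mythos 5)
Your proposal is correct and takes essentially the same route as the paper's proof: it tests the mass-matrix form of (\ref{eq:oneelemformulation}) with the projected entropy variables $\bm{v}_h = \bm{P}_q\bm{v}_q$, uses the chain rule to identify $\bm{v}_h^T\bm{M}\td{\bm{u}_h}{t}$ with $\bm{1}^T\bm{W}\td{U(\bm{u}_q)}{t}$, splits $2\bm{Q}_N = \bm{B}_N + \LRp{\bm{Q}_N - \bm{Q}_N^T}$ via Theorem~\ref{thm:sbp}, and collapses the symmetric part by flux consistency and the antisymmetric part by Tadmor's condition together with $\bm{Q}_N\bm{1} = \bm{0}$, exactly as in the paper. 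The only differences are presentational (you work forward from the left-hand side rather than testing the variational form, and you make explicit the blockwise Hadamard bookkeeping that the paper handles via its Kronecker-product convention), with the signs and cancellations all checking out.
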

\begin{proof}
The proof borrows concepts from \cite{gassner2017br1, chen2017entropy} in combination with properties of the decoupled SBP-like operator $\bm{D}_N$.  We first note that we can rewrite the projection and lifting operation as
\begin{equation}
\LRs{\begin{array}{cc} 
\bm{P}_q & \bm{L}_q\end{array}} = \bm{M}^{-1}\LRs{\begin{array}{c} 
\bm{V}_q\\ \bm{V}_f\end{array}}^T \LRs{\begin{array}{cc} 
\bm{W} &\\ &\bm{W}_f\end{array}}.
\end{equation}
Multiplying (\ref{eq:oneelemformulation}) by $\bm{M}$ yields the variational form of the equation
\begin{align}
\label{eq:matrixvarform}
\bm{M}\td{\bm{u}_h}{t} 
&= - \LRs{\begin{array}{c} 
\bm{V}_q\\ \bm{V}_f\end{array}}^T \LRp{2 \bm{Q}_N \circ \bm{F}_S}\bm{1} - \bm{V}_f^T\bm{W}_f\diag{\hat{\bm{n}}} \LRp{\bm{f}^* - \bm{f}(\tilde{\bm{u}}_f)}.
\end{align}
where we have moved the matrix of quadrature weights $\bm{W}$ inside the Hadamard product because multiplication of a matrix by a diagonal matrix is identical to taking the Hadamard product of both matrices \cite{horn2012matrix}.  We now test with the projection of the entropy variables $\bm{v}_h^T$ on both sides and note that
\begin{equation}
\bm{v}_h^T\bm{M} = \bm{v}_q^T\bm{P}_q^T\bm{M} = \bm{v}_q^T \bm{W}\bm{V}_q \bm{M}^{-1}\bm{M}
= \bm{v}_q \bm{W}\bm{V}_q.  
\end{equation}
Plugging this into the left hand side of (\ref{eq:matrixvarform}), assuming continuity in time (such that the chain rule holds), and using that $\bm{v} = \pd{U(\bm{u})}{\bm{u}}$ then yields
\begin{align}
\bm{v}_h^T\bm{M}\td{\bm{u}_h}{t} &= \bm{v}_q^T \bm{W}\td{\bm{u}_q}{t} = \sum_{i=1}^{N_q} w_i\bm{v}(\bm{u}_N({x}_i,t))^T \pd{\bm{u}_N({x}_i,t)}{t} = \sum_{i=1}^{N_q} w_i\pd{U\LRp{\bm{u}_N({x}_i,t)}}{t} \nonumber\\
&= \bm{1}^T\bm{W}\td{U(\bm{u}_q)}{t} \approx \int_{\hat{D}} \pd{U(\bm{u}_N)}{t}\diff{\hat{x}}.
\end{align}
We treat the right hand side next.  The contribution involving the numerical flux $\bm{f}^*$ yields
\begin{equation}
-\bm{v}_h^T\bm{V}_f^T\bm{W}_f\diag{\hat{\bm{n}}} \LRp{ \bm{f}^* -\bm{f}(\tilde{\bm{u}})} = -\tilde{\bm{v}}_f^T\bm{W}_f\diag{\hat{\bm{n}}} \LRp{ \bm{f}^* -\bm{f}(\tilde{\bm{u}}_f)}.
\label{eq:numflux}
\end{equation}
For the right hand side terms involving $\bm{F}_S$, testing with $\bm{v}_h^T$ and using Theorem~\ref{thm:sbp} yields 
\begin{equation}
- \LRp{\LRs{\begin{array}{c} 
\bm{V}_q\\ \bm{V}_f\end{array}}\bm{v}_h}^T \LRp{ \LRp{
\bm{B}_N + 
\bm{Q}_N - \bm{Q}_N^T} \circ \bm{F}_S}\bm{1} = - \tilde{\bm{v}}^T \LRp{ \LRp{
\bm{B}_N + 
\bm{Q}_N - \bm{Q}_N^T} \circ \bm{F}_S}\bm{1}.
\label{eq:rhs}
\end{equation}
The first term involving $\bm{B}_N$ is simplified by noting that $\bm{B}_N$ is diagonal, and that  $\bm{B}_N \circ \bm{F}_S$ extracts the values of $\bm{F}_S$ at face quadrature points.  These values correspond to evaluations of the entropy conservative numerical flux $\bm{f}_S(\tilde{\bm{u}}_f,\tilde{\bm{u}}_f)$, where $\tilde{\bm{u}}_f = \bm{u}\LRp{\bm{V}_f\bm{v}_h}$ is the evaluation of the entropy-projected conservative variables at surface quadrature points.  By the consistency condition of Definition~\ref{def:tadmor}, $\bm{f}_S(\tilde{\bm{u}}_f,\tilde{\bm{u}}_f) = \bm{f}(\tilde{\bm{u}}_f)$.  As a result, 
\begin{equation}
\LRp{\bm{B}_N \circ \bm{F}_S}\bm{1} =\LRp{\LRs{
\begin{array}{cc}
0&\\
& \bm{W}_f {\rm diag}\LRp{\hat{\bm{n}}} {\rm diag}\LRp{\bm{f}(\tilde{\bm{u}}_f)}
\end{array}
}}\bm{1} =  \bm{W}_f {\rm diag}\LRp{\hat{\bm{n}}} \bm{f}(\tilde{\bm{u}}_f).
\end{equation}
Combining this with (\ref{eq:rhs}) and (\ref{eq:numflux}) yields 
\begin{align}
-\tilde{\bm{v}}^T \LRp{ 
\bm{B}_N \circ \bm{F}_S}\bm{1} - \tilde{\bm{v}}_f^T\bm{W}_f\diag{\hat{\bm{n}}} \LRp{ \bm{f}^* -\bm{f}(\tilde{\bm{u}})} &= -\tilde{\bm{v}}_f^T \bm{W}_f {\rm diag}\LRp{\hat{\bm{n}}} \bm{f}(\tilde{\bm{u}}_f)  - \tilde{\bm{v}}_f^T\bm{W}_f\diag{\hat{\bm{n}}} \LRp{ \bm{f}^* -\bm{f}(\tilde{\bm{u}}_f)} \nonumber\\
&=  - \bm{1}^T\bm{W}_f\diag{\hat{\bm{n}}}\tilde{\bm{v}}_f^T\bm{f}^* \approx -\int_{\partial \hat{D}} \LRp{\Pi_N \bm{v}}^T\bm{f}^*  \hat{n} \diff{\hat{x}}.
\end{align}
where we have used the fact that $\bm{W}_f\diag{\hat{\bm{n}}}$ is diagonal to commute $\tilde{\bm{v}}_f^T$.  
The remaining terms in (\ref{eq:rhs}) involving $\bm{Q}_N$ are 
\begin{equation}
\tilde{\bm{v}}^T\LRp{ \LRp{
\bm{Q}_N - \bm{Q}_N^T} \circ \bm{F}_S}\bm{1} = 
\tilde{\bm{v}}^T \LRp{\bm{Q}_N \circ \bm{F}_S - \bm{Q}_N^T \circ \bm{F}_S}\bm{1} = \tilde{\bm{v}}^T \LRp{\bm{Q}_N \circ \bm{F}_S}\bm{1} - \bm{1}^T \LRp{\bm{Q}_N \circ \bm{F}_S}\tilde{\bm{v}},
\label{eq:rhsQN}
\end{equation}
where we have used 
\begin{equation}
\tilde{\bm{v}}^T\LRp{\bm{Q}_N^T \circ \bm{F}_S}\bm{1} = 
\bm{1}^T\LRp{\bm{Q}_N^T \circ \bm{F}_S}^T\tilde{\bm{v}}
= 
\bm{1}^T\LRp{\bm{Q}_N \circ \bm{F}_S}\tilde{\bm{v}}
\end{equation}
by the transpose property of Lemma~\ref{lemma:hadamard} and symmetry of $\bm{F}_S$.  Writing out (\ref{eq:rhsQN}) using sum notation 
\begin{align}
\tilde{\bm{v}}^T \LRp{\bm{Q}_N \circ \bm{F}_S}\bm{1} - \bm{1}^T \LRp{\bm{Q}_N \circ \bm{F}_S}\tilde{\bm{v}}
&= \sum_{i,j=1}^{N_q}  \LRp{\bm{Q}_N}_{ij} \tilde{\bm{v}}_i^T\LRp{\bm{F}_S}_{ij} - \LRp{\bm{Q}_N}_{ij} \tilde{\bm{v}}_j^T\LRp{\bm{F}_S}_{ij} \nonumber\\
&= \sum_{i,j=1}^{N_q} \LRp{\bm{Q}_N}_{ij} \LRp{\tilde{\bm{v}}_i - \tilde{\bm{v}}_j}^T \bm{f}_S(\tilde{\bm{u}}_i,\tilde{\bm{u}}_j).
\label{eq:QNsum}
\end{align}
Using the conservation condition in Definition~\ref{def:tadmor} of an entropy conservative flux and the fact that $\tilde{\bm{u}}_i = \bm{u}\LRp{\tilde{\bm{v}}_i}$, 
\begin{equation}
\LRp{\tilde{\bm{v}}_i - \tilde{\bm{v}}_j}^T \bm{f}_S(\tilde{\bm{u}}_i,\tilde{\bm{u}}_j) = \psi(\tilde{\bm{u}}_i) -  \psi(\tilde{\bm{u}}_j).
\label{eq:psi}
\end{equation}
Let $\bm{\psi}_i = \psi(\tilde{\bm{u}}_i)$.  Inserting (\ref{eq:psi}) into (\ref{eq:QNsum}) yields
\begin{align}
\sum_{i,j=1}^{N_q} \LRp{\bm{Q}_N}_{ij} \LRp{\tilde{\bm{v}}_i - \tilde{\bm{v}}_j}^T \bm{f}_S(\tilde{\bm{u}}_i,\tilde{\bm{u}}_j) 
&= \bm{1}^T\bm{Q}_N \bm{\psi} - \bm{\psi}^T\bm{Q}_N \bm{1} = \bm{1}^T\bm{Q}_N \bm{\psi} \nonumber\\
&= \bm{1}^T\LRp{\bm{B}_N-\bm{Q}_N^T} \bm{\psi} = \bm{1}^T\bm{B}_N\bm{\psi}\nonumber\\
&= \bm{1}^T\bm{W}_f\diag{\hat{\bm{n}}}\psi(\tilde{\bm{u}}_f) \approx \int_{\partial \hat{D}} \hat{n}\psi(\tilde{\bm{u}}_f)\diff{\hat{x}}, 
\end{align}
where Theorem~\ref{thm:sbp} implies that $\bm{Q}_N\bm{1} = 0$.  

\end{proof}
}

It is important to emphasize that conservation of entropy does not, in general, imply stability of the numerical scheme unless the solution satisfies additional constraints.  For example, the transformation between conservative and entropy variables for the compressible Euler and Navier-Stokes equations is well-defined only if the density and internal energy are positive, and steps must be taken in any numerical scheme to guarantee that the discrete solution satisfies such constraints.  For DG methods, this is most commonly done using bound and positivity-preserving limiters \cite{zhang2010positivity, zhang2012maximum}.  

It is also worth pointing out that, for specific choices of basis and quadrature, the formulation (\ref{eq:oneelemformulation}) can be significantly simplified.  For example, the co-location of nodal and quadrature points (assuming that the dimension of the approximation space is identical to the number of quadrature points) reduces $\bm{V}_q, \bm{P}_q$ to identity matrices.  Furthermore, if quadrature points coincide with boundary points (as with DG-SEM methods), the lift matrix $\bm{L}_q$ is zero except for entries which correspond to those boundary points.  These assumptions greatly simplify both the formulation and implementation of entropy conservative/stable DG methods, and will be discussed more thoroughly in a future manuscript.  

\subsection{Local conservation}
\label{sec:localconserv}
We next show that (\ref{eq:oneelemformulation}) is locally conservative.  {The proof is very similar to that of \cite{chen2017entropy}.  Recall the matrix variational form (\ref{eq:matrixvarform})
\begin{align}
\bm{M}\td{\bm{u}_h}{t} 
&= - \LRs{\begin{array}{c} 
\bm{V}_q\\ \bm{V}_f\end{array}}^T \LRp{2 \bm{Q}_N \circ \bm{F}_S}\bm{1} - \bm{V}_f^T\bm{W}_f\diag{\hat{\bm{n}}} \LRp{\bm{f}^* - \bm{f}(\tilde{\bm{u}}_f)}.
\end{align}
To prove local conservation, we test with $\LRp{\bm{P}_q\bm{1}}^T$, which gives
\begin{align}
\bm{1}^T\bm{W}\td{\bm{u}_q}{t} 
&= - \bm{1}^T\LRp{2 \bm{Q}_N \circ \bm{F}_S}\bm{1} - \bm{1}^T\bm{W}_f\diag{\hat{\bm{n}}} \LRp{\bm{f}^* - \bm{f}(\tilde{\bm{u}}_f)}.
\label{eq:localconserv}
\end{align}
Using the commutativity of the Hadmard product (Lemma~\ref{lemma:hadamard}), symmetry of $\bm{F}_S$, and Theorem~\ref{thm:sbp},
\begin{align}
\bm{1}^T\LRp{2 \bm{Q}_N \circ \bm{F}_S}\bm{1} &= \bm{1}^T\LRp{\bm{Q}_N \circ \bm{F}_S}\bm{1} + \bm{1}^T\LRp{\bm{Q}_N \circ \bm{F}_S}\bm{1} = 
\bm{1}^T\LRp{\bm{Q}_N \circ \bm{F}_S}\bm{1} + \bm{1}^T\LRp{\bm{Q}_N^T \circ \bm{F}_S}\bm{1} \nonumber\\
&= \bm{1}^T\LRp{\LRp{\bm{Q}_N+\bm{Q}_N^T} \circ \bm{F}_S}\bm{1} =  \bm{1}^T\LRp{\bm{B}_N \circ \bm{F}_S}\bm{1} = \bm{1}^T\bm{W}_f\diag{\hat{\bm{n}}} \bm{f}(\tilde{\bm{u}}_f).
\end{align}
Plugging this back into (\ref{eq:localconserv}) yields
\begin{equation}
\bm{1}^T\bm{W}\td{\bm{u}_q}{t} + \bm{1}^T\bm{W}_f\diag{\hat{\bm{n}}}\bm{f}^* = 0 
\end{equation}
which is a quadrature approximation to the conservation condition
\begin{equation}
\int_{\hat{D}} \pd{\bm{u}}{t}\diff{\hat{x}} + \int_{\partial \hat{D}} \bm{f}^*\diag{\hat{n}} \diff{\hat{x}} = 0.
\end{equation}

\section{{Entropy stable DG methods on multiple elements and in higher dimensions} }
\label{sec:ecdg2}
\label{sec:multipleelems}

{
In this section, we discuss the extension of entropy conservative DG schemes to multiple elements, the addition of interface dissipation, and the construction of entropy stable schemes in higher dimensions.   

\subsection{Multiple elements}

We now extend entropy conservative schemes to multiple elements in one dimension, where the domain $\Omega$ is broken up into $K$ non-overlapping intervals $D^k$ with outward normals $n$.  Each interval can be represented as the affine mapping $\Phi^k$ of the reference interval $\hat{D}$.  Because this mapping is affine, $J^k$ (the determinant of the Jacobian of $\Phi^k$) is constant over each element. 

An entropy conservative formulation can be constructed by modifying the lifting matrix for dimensional consistency.  Let $\bm{L}_q = \frac{1}{J^k}{\hat{\bm{L}}}_q$, where $\hat{\bm{L}}_q$ is the lifting matrix over the reference element, and let $\bm{n} = [-1,1]$ be the vector containing values of the outward normal at face quadrature points on the element $D^k$.  An entropy stable formulation over $D^k$ is given by 
\begin{align}
\td{\bm{u}_h}{t} &= - \LRs{\begin{array}{cc} 
\bm{P}_q & \bm{L}_q\end{array}} \LRp{2\bm{D}_N \circ \bm{F}_S} \bm{1} - \bm{L}_q\diag{{\bm{n}}}\LRp{\bm{f}^* - \bm{f}(\tilde{\bm{u}}_f)},  \label{eq:multielemformulation}\\
\LRp{\bm{F}_S}_{ij} &= \bm{f}_S\LRp{\tilde{\bm{u}}_{i},\tilde{\bm{u}}_{j}}, \qquad 1\leq i,j \leq N_q+N^f_q, \nonumber\\
\bm{f}^* &= \bm{f}_S\LRp{\tilde{\bm{u}}_f^+,\tilde{\bm{u}}_f}, \qquad \text{ at interior interfaces},
\nonumber
\end{align}
where $\tilde{\bm{u}}_f^+$ is the value of the entropy-projected conservative variables on the neighboring element.

Let $\hat{\bm{x}}^f_i$ denote face quadrature points on the reference element $\hat{D}$, and let $\bm{x}^f_i = \Phi^k\LRp{\hat{\bm{x}}^f_i}$ denote the mapping of the reference face points to physical face points of $D^k$.  Let $\bm{W}_{\partial \Omega}$ be the diagonal boundary matrix such that 
\begin{equation}
\LRp{\bm{W}_{\partial \Omega}}_{ii} = \begin{cases}
\LRp{\bm{W}_f}_{ii}, & \text{ if $\bm{x}^f_i$ is on the domain boundary } \partial \Omega\\
0, & \text{ otherwise.}
\end{cases}
\label{eq:bmatrix}
\end{equation}
In other words, $\bm{W}_{\partial \Omega}$ is zero for any interior elements and is equal to $\bm{W}_f$ at face quadrature points $\bm{x}^f_i$ which coincide with the boundary $\partial \Omega$.  Then, (\ref{eq:multielemformulation}) satisfies the following theorem:
\begin{theorem}
\noteTwo{Let $\bm{f}_S$ be an entropy conservative flux from Definition~\ref{def:tadmor}}.
The scheme (\ref{eq:multielemformulation}) is locally conservative and \noteTwo{satisfies}
\begin{equation}
\sum_{k=1}^K \bm{1}^T J^k\bm{W}\td{U(\bm{u}_q)}{t} = \sum_{k=1}^K \bm{1}^T\bm{W}_{\partial \Omega} \diag{{\bm{n}}}\LRp{\psi(\tilde{\bm{u}}_f) - \tilde{\bm{v}}_f^T\bm{f}^*},
\end{equation}
which is an approximation of \noteTwo{(\ref{eq:consentropy}) involving quadrature and the numerical flux on the boundary $\bm{f}^*$}
\begin{equation}
\int_{\Omega} \pd{U(\bm{u}_N)}{t}\diff{x} = \int_{\partial \Omega} \LRp{\psi\LRp{\bm{u}\LRp{\Pi_N\bm{v}}}-\LRp{\Pi_N \bm{v}}^T\bm{f}^*}  {n} \diff{x}.
\end{equation}
\label{thm:ecmultielem}
\end{theorem}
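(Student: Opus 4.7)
The plan is to apply Theorem~\ref{thm:ec1D} element-wise to each interval $D^k$, then sum the resulting entropy balance over $k=1,\ldots,K$ and show that contributions from interior interfaces cancel pairwise. The local conservation statement follows from the element-wise conservation argument of Section~\ref{sec:localconserv}, combined with the same cancellation mechanism at interior faces.

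First I would verify that the single-element proof carries over to each mapped interval $D^k$ with the scaled lifting matrix $\bm{L}_q = \frac{1}{J^k}\hat{\bm{L}}_q$. Because $\Phi^k$ is affine and $J^k$ is constant, multiplying (\ref{eq:multielemformulation}) by $J^k\bm{M}$ absorbs the Jacobian into the mass matrix while leaving the Hadamard flux-difference term unchanged; the remaining steps of Theorem~\ref{thm:ec1D} (testing with $\bm{v}_h^T$, invoking the SBP-like identity of Theorem~\ref{thm:sbp}, using the entropy conservation property of $\bm{f}_S$ inside the double sum, and using $\bm{Q}_N\bm{1} = 0$) then yield the per-element balance
\begin{equation}
\bm{1}^T J^k\bm{W}\td{U(\bm{u}_q)}{t} = \bm{1}^T\bm{W}_f\diag{\bm{n}}\LRp{\psi(\tilde{\bm{u}}_f) - \tilde{\bm{v}}_f^T\bm{f}^*}.
\end{equation}

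Next I would sum this identity over $k = 1,\ldots,K$ and group the right-hand side by face. Each interior interface is shared by two elements; using $+$ and $-$ to denote the exterior and interior traces of entropy-projected conservative variables $\tilde{\bm{u}}_f$ and projected entropy variables $\tilde{\bm{v}}_f$, and noting that the outward normals satisfy $n^+ = -n^-$, the combined contribution from both sides at a single interior face quadrature point reduces to
\begin{equation}
w_f\, n^-\LRs{\LRp{\psi(\tilde{\bm{u}}_f^-) - \psi(\tilde{\bm{u}}_f^+)} - \LRp{\tilde{\bm{v}}_f^- - \tilde{\bm{v}}_f^+}^T \bm{f}_S(\tilde{\bm{u}}_f^-,\tilde{\bm{u}}_f^+)},
\end{equation}
where I have used the symmetry $\bm{f}_S(\tilde{\bm{u}}^-,\tilde{\bm{u}}^+) = \bm{f}_S(\tilde{\bm{u}}^+,\tilde{\bm{u}}^-)$. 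The bracket vanishes identically by the entropy conservation condition of Definition~\ref{def:tadmor}, so only face quadrature points on $\partial \Omega$ survive, which is precisely what is encoded by $\bm{W}_{\partial \Omega}$ from (\ref{eq:bmatrix}). This gives the claimed global entropy balance.

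For local conservation, I would repeat the argument of Section~\ref{sec:localconserv} on each $D^k$, testing (\ref{eq:multielemformulation}) with $(\bm{P}_q\bm{1})^T$ and scaling by $J^k$ to obtain
\begin{equation}
\bm{1}^T J^k\bm{W}\td{\bm{u}_q}{t} + \bm{1}^T\bm{W}_f\diag{\bm{n}}\bm{f}^* = 0
\end{equation}
on each element, which is the local conservation statement. Summing over elements and using the symmetry of $\bm{f}_S$ together with $n^+ = -n^-$ shows that interior interface contributions of $\bm{f}^*$ cancel, leaving a quadrature approximation of the global conservation law. The main technical subtlety, and the step to carry out carefully, is bookkeeping the $\pm$ trace conventions so that the symmetry of $\bm{f}_S$ and the sign flip of the normal cleanly produce the Definition~\ref{def:tadmor} identity; once that bookkeeping is right, both statements fall out immediately.
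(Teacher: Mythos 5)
Your proposal is correct and follows essentially the same route as the paper's proof: apply the single-element result of Theorem~\ref{thm:ec1D} on each mapped element (with the Jacobian scaling absorbed as you describe), sum over elements, and cancel interior interface contributions using the symmetry and conservation conditions of Definition~\ref{def:tadmor} together with $n^+ = -n$. The only cosmetic difference is that you group the two contributions at each interior face directly and invoke the conservation identity on the resulting bracket, whereas the paper first rewrites the summed flux terms in a half-jump form and then returns the $\psi(\tilde{\bm{u}}^+)$ contributions to neighboring elements; the cancellation mechanism is identical.
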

\begin{proof}
The proof of local conservation over each element $D^k$ is the same as the one-element case shown in Section~\ref{sec:localconserv}.  For multiple elements, showing conservation of entropy is done by first applying the one-element proof of conservation of entropy over each element $D^k$, summing over all elements, then cancelling shared interface terms.  Without loss of generality, we assume a periodic domain such that all interfaces are interior interfaces.  Scaling by $J^k$ on both sides, applying the one-element proof of conservation of entropy in Theorem~\ref{thm:ec1D}, and summing the results gives
\begin{equation}
\sum_{k=1}^K \bm{1}^TJ^k\bm{W}\td{U(\bm{u}_q)}{t} = \sum_{k=1}^K \LRp{\bm{1}^T\bm{W}_f \diag{{\bm{n}}} \psi\LRp{\tilde{\bm{u}}_f} - \bm{1}^T\bm{W}_f\diag{{\bm{n}}} \tilde{\bm{v}}_f^T\bm{f}_S\LRp{\tilde{\bm{u}}^+,\tilde{\bm{u}}}}. 
\end{equation}
Note that ${n}^+ = -{n}$, where ${n}^+$ denotes the outward normal on a neighboring element.  Then, splitting interface contributions between neighboring elements yields
\begin{align}
-\sum_{k=1}^K \tilde{\bm{v}}_f^T\bm{W}_f\diag{{\bm{n}}} \bm{f}_S\LRp{\tilde{\bm{u}}^+,\tilde{\bm{u}}}
&= \sum_{k=1}^K \frac{1}{2}\bm{1}^T\bm{W}_f\diag{{\bm{n}}} \LRp{\LRp{\tilde{\bm{v}}_f}^+-{\tilde{\bm{v}}_f}}^T\bm{f}_S\LRp{\tilde{\bm{u}}^+,\tilde{\bm{u}}},\label{eq:tadmornumflux}\\
&= \sum_{k=1}^K  \frac{1}{2}\bm{1}^T\bm{W}_f\diag{{\bm{n}}} \LRp{ \psi\LRp{\tilde{\bm{u}}^+} - \psi\LRp{\tilde{\bm{u}}}},\nonumber
\end{align}
where we have used the symmetry and conservation conditions of Definition~\ref{def:tadmor}.   Returning contributions involving $\psi\LRp{\tilde{\bm{u}}^+}$ to neighboring elements of $D^k$ cancels interface terms, such that
\begin{equation}
\sum_{k=1}^K \bm{1}^TJ^k\bm{W}\td{U(\bm{u}_q)}{t} = 0.  
\end{equation}
When the domain is not periodic, (\ref{eq:tadmornumflux}) is required only on interior interfaces, such that contributions from $\bm{f}^*$ remain on the boundaries of the domain.  
\end{proof}
}
We note that, since $\bm{f}^*$ is a function of $\tilde{\bm{u}}$ and not $\bm{u}$, it is not immediately clear that this statement of local conservation satisfies the conditions of the classic Lax-Wendroff theorem.  However, this form of local conservation does satisfy a generalized definition of local conservation, which is sufficient to guarantee convergence to a weak solution under mesh refinement \cite{shi2017local}.  

We also note that the analysis in previous sections has focused on the construction of entropy conservative schemes.  However, entropy is only conserved for smooth solutions and should be dissipated away in the presence of discontinuities and shocks.  To this end, one can construct discretely entropy stable schemes by adding additional dissipative terms (for example, by adding matrix dissipation terms \cite{chandrashekar2013kinetic, winters2017uniquely}) or Lax-Friedrichs penalization \noteTwo{in terms of either the entropy variables \cite{carpenter2014entropy} or the conservative variables \cite{chen2017entropy}.  For the numerical experiments presented in Section~\ref{sec:num}, we apply a local Lax-Friedrichs penalization in terms of the \noteTwo{entropy-projected conservative variables}, augmenting the flux function at element interfaces with the additional term
\begin{equation}
 \bm{f}_S\LRp{\bm{u}_L,\bm{u}_R}\Rightarrow \bm{f}_S\LRp{\bm{u}_L,\bm{u}_R} - \frac{\lambda}{2}\jump{\tilde{\bm{u}}},
\end{equation}
where $\lambda$ is an estimate of the maximum eigenvalue of the flux Jacobian.   It is not immediately obvious that the Lax-Friedrichs penalization dissipate entropy when multiplied by the entropy variables; however, it was shown in \cite[Corollary 3.2]{chen2017entropy} that the local Lax-Friedrichs flux is entropy dissipative when $\lambda$ is an appropriately chosen estimate of the average wave-speed.  The result can be extended to the current setting by noting that, since conservation of entropy requires testing with the projection of the entropy variables, the jump term should involve the evaluations of the entropy-projected conservative variables $\tilde{\bm{u}} = \bm{u}\LRp{\Pi_N \bm{v}}$ in order to guarantee entropy dissipation.  

\note{Finally, Theorem~\ref{thm:ecmultielem} implies that if boundary conditions are enforced in such a way that $\bm{f}^*$ is entropy stable at the boundaries (see, for example, \cite{svard2014entropy, parsani2015entropy, chen2017entropy}), then the semi-discrete scheme will satisfy a discrete analogue of the global entropy inequality 
\begin{equation}
\int_{\Omega} \pd{U(\bm{u}_N)}{t}\diff{x} \leq \int_{\partial \Omega} \LRp{\psi\LRp{\bm{u}\LRp{\Pi_N\bm{v}}}-\LRp{\Pi_N \bm{v}}^T\bm{f}^*}  {n} \diff{x} \leq 0.  
\end{equation}
A similar statement of global entropy dissipation also holds for periodic boundary conditions.}

\subsection{Higher dimensions}

In this section, we describe the construction of entropy stable DG methods for nonlinear conservation laws in $d$ dimensions on a domain $\Omega$
\begin{equation}
\pd{\bm{u}}{t} + \sum_{i=1}^d\pd{\bm{f}_i(\bm{u})}{\bm{x}_i} = 0.  
\end{equation}
We assume that the $d$-dimensional domain $\Omega \in \mathbb{R}^d$ is decomposed into non-overlapping elements $D^k$, and that $D^k$ is the image of the reference element $\widehat{D}$  under an affine mapping $\bm{x} = \Phi^k(\hat{\bm{x}})$ (where $\hat{\bm{x}}$ denotes coordinates on the reference element).  We approximate the solution over a physical element by mapping $P^N\LRp{\widehat{D}}$ to $D^k$ under $\Phi^k$
\begin{equation}
P^N\LRp{D^k} =  {\Phi}^k \circ P^N\LRp{\widehat{D}}.
\end{equation}
Volume integrals over each physical element $D^k$ can be mapped to the reference element.  In two and three dimensions, we also assume that each face of the element $D^k$ is the image of some reference face, such that surface integrals can be mapped from the physical element boundary $\partial D^k$ to reference element boundary $\partial \hat{D}$.  Thus, we have
\begin{equation}
\int_{D^k} f(\bm{x})\diff{\bm{x}} = \int_{\hat{D}}f(\bm{x}) J^k\diff{\hat{\bm{x}}}, \qquad \int_{\partial D^k} f(\bm{x})\diff{\bm{x}} = \int_{\partial \hat{D}} f(\bm{x}) J^k_f\diff{\hat{\bm{x}}},
\end{equation}
where  $J^k$ is the determinant of the Jacobian of $\Phi^k$ and $J^k_f$ is the Jacobian factor of the face mapping.  We assume both mappings to be affine in this work, such that both $J^k$ is constant over each element $D^k$ and $J^k_f$ is constant over each face.  


To construct entropy stable schemes in higher dimensions, we require a generalization of the entropy conservative fluxes defined in Definition~\ref{def:tadmor}.  
\begin{definition}
Let $\bm{f}_{i,S}(\bm{u}_L,\bm{u}_R)$ be a bivariate function which is symmetric and consistent with the $i$th coordinate flux function $\bm{f}_i(\bm{u})$.  The numerical flux $\bm{f}_{i,S}(\bm{u}_L, \bm{u}_R)$ is entropy conservative if, for entropy variables $\bm{v}_L = \bm{v}(\bm{u}_L), \bm{v}_R = \bm{v}(\bm{u}_R)$
\begin{align}
&\LRp{\bm{v}_L - \bm{v}_R}^T \bm{f}_{i,S}(\bm{u}_L,\bm{u}_R) = (\psi_{i,L} - \psi_{i,R}), \\
&\psi_{i,L} = \psi_i(\bm{v}(\bm{u}_L)), \quad \psi_{i,R} = \psi_i(\bm{v}(\bm{u}_R)), \qquad i = 1,\ldots,d.  \nonumber
\end{align}
\label{def:tadmor2}
\end{definition}

We now define SBP-like operators on mapped elements in multiple dimensions.  Let $\bm{G}^k = \pd{\Phi^k}{\bm{x}}$ be the matrix of geometric factors.  Since the mapping $\Phi^k$ is assumed to be affine, the entries of $\bm{G}^k$ are constant over each element.  Using these factors, we can define 
\begin{align}
\bm{D}^i_N = \sum_{j=1}^d \bm{G}^k_{ij} \hat{\bm{D}}^j_N, \qquad  \bm{L}_q = \hat{\bm{L}}_q \diag{\frac{\bm{J}^k_f}{J^k}},
\label{eq:sbpgeo}
\end{align}
where $\hat{\bm{D}}^j_N$ and $\hat{\bm{L}}_q$ are SBP-like operators defined on the reference element $\hat{D}$, and $\bm{J}^k_f$ is a vector of the values of $J^k_f$ at surface quadrature points.  One can show the relation between the geometric factors $\bm{G}_{ij}$ and components of the outward normals $\bm{n}_i$
\begin{equation}
\sum_{j=1}^d J^k\bm{G}_{ij} \hat{\bm{n}}_i\hat{J}_f = \bm{n}_i J^k_f,
\end{equation}
where $\hat{J}_f$ is the face Jacobian factor of the mapping from faces of the reference element to the reference face \cite{hesthaven2007nodal}.\footnote{The factor $\hat{J}_f$ appears, for example, for triangles, where the reference triangle is usually taken to be a right triangle.  For this reference triangle, two faces are of the same size, but the hypotenuse face is larger and will thus have a different value of $\hat{J}_f$ from the other two faces when mapping to the reference face.}  We assume $\hat{J}_f$ is pre-multiplied into the surface quadrature weights.  

Let $\bm{W}^k_N$ be the Jacobian-weighted diagonal matrix of volume and surface quadrature points
\begin{equation}
\bm{W}^k_N = 
\LRp{\begin{array}{cc}
\bm{W}J^k & \\
& \bm{W}_f \diag{\bm{J}^k_f/\hat{\bm{J}}_f}
\end{array}},
\end{equation}
where $\hat{\bm{J}}_f$ is the vector containing values of $\hat{J}_f$ at surface quadrature points, and ${\bm{J}^k_f/\hat{\bm{J}}_f}$ is a vector corresponding to the entry-wise division of $\bm{J}^k_f$ by $\hat{\bm{J}}_f$.  Then, $\bm{D}^i_N$ and $\bm{L}_q$ satisfy the following analogue of Theorem~\ref{thm:sbp} with respect to $\bm{W}^k_N$
\begin{equation}
\bm{W}^k_N \bm{D}^i_N + \LRp{\bm{W}^k_N \bm{D}^i_N}^T = \bm{B}^i_N, \qquad \bm{B}^i_N =
\LRp{\begin{array}{cc}
0 & \\
& \bm{W}_f \diag{{\bm{J}^k_f/\hat{\bm{J}}_f}} \diag{\bm{n}_i}
\end{array}}.
\label{eq:sbpgeoeq}
\end{equation}

A semi-discrete scheme can then be constructed over each element using (\ref{eq:sbpgeo})
\begin{align}
\td{\bm{u}_h}{t} &= - \sum_{i=1}^d \LRs{\begin{array}{cc} 
\bm{P}_q & \bm{L}_q\end{array}} \LRp{2\bm{D}^i_N \circ \bm{F}_{i,S}} \bm{1} - \bm{L}_q \diag{\bm{n}_i}\LRp{\bm{f}^*_i - \bm{f}_i(\tilde{\bm{u}}_f)},  \label{eq:multielemformulationnd}\\
\LRp{\bm{F}_{i,S}}_{ij} &= \bm{f}_{i,S}\LRp{\tilde{\bm{u}}_{i},\tilde{\bm{u}}_{j}}, \qquad 1\leq i,j \leq N_q+N^f_q.
\nonumber
\end{align}
where $\bm{f}^*_i$ is the $i$th component of the numerical flux and $\tilde{\bm{u}}$ is again the evaluation of the entropy-projected conservative variables.   

Let $\bm{W}^k_{\partial \Omega} = \bm{W}_{\partial \Omega}\diag{\bm{J}^k_f / \hat{\bm{J}}_f}$, where $\bm{W}_{\partial \Omega}$ is the diagonal boundary matrix defined in (\ref{eq:bmatrix}).  The multi-dimensional scheme satisfies the following theorem:
\begin{theorem}
\noteTwo{Let $\bm{f}_S$ be a higher dimensional entropy conservative flux from Definition~\ref{def:tadmor2}.}  
The scheme (\ref{eq:multielemformulationnd}) is locally conservative and \noteTwo{satisfies}
\begin{equation}
\sum_{k=1}^K \bm{1}^T J^k\bm{W}\td{U(\bm{u}_q)}{t} = \sum_{k=1}^K \sum_{i=1}^d \bm{1}^T\bm{W}_{\partial \Omega} \diag{\bm{n}_i}\LRp{\psi_i(\tilde{\bm{u}}_f) - \tilde{\bm{v}}_f^T\bm{f}_i^*},
\end{equation}
which is an \noteTwo{approximation of the higher dimensional generalization of the conservation of entropy (\ref{eq:consentropy}) involving numerical quadrature and the boundary numerical fluxes $\bm{f}_i^*$}
\begin{equation}
\int_{\Omega} \pd{U(\bm{u}_N)}{t}\diff{x} = \int_{\partial \Omega} \LRp{\psi_i\LRp{\bm{u}\LRp{\Pi_N\bm{v}}}-\LRp{\Pi_N \bm{v}}^T\bm{f}_i^*} \cdot \bm{n} \diff{\bm{x}}.
\end{equation}
\end{theorem}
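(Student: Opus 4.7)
The plan is to mirror the one-dimensional argument of Theorem~\ref{thm:ecmultielem} direction-by-direction, leveraging the multi-dimensional SBP-like identity (\ref{eq:sbpgeoeq}) and the componentwise entropy conservative flux from Definition~\ref{def:tadmor2}. First I would establish local conservation on a single element by testing (\ref{eq:multielemformulationnd}) with $(\bm{P}_q \bm{1})^T$. Summing over $i$, the Hadamard product identity $\bm{1}^T(2\bm{Q}^i_N \circ \bm{F}_{i,S})\bm{1} = \bm{1}^T((\bm{Q}^i_N + (\bm{Q}^i_N)^T)\circ \bm{F}_{i,S})\bm{1} = \bm{1}^T(\bm{B}^i_N \circ \bm{F}_{i,S})\bm{1}$ (using symmetry of $\bm{F}_{i,S}$, commutativity of $\circ$, and (\ref{eq:sbpgeoeq})) combined with consistency of $\bm{f}_{i,S}$ collapses the volume term to exactly $\bm{1}^T \bm{W}_f \diag{\bm{J}^k_f/\hat{\bm{J}}_f}\diag{\bm{n}_i}\bm{f}_i(\tilde{\bm{u}}_f)$, cancelling the $\bm{f}_i(\tilde{\bm{u}}_f)$ in the surface term and leaving the required conservation statement with only $\bm{f}^*_i$ on the boundary.

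For the entropy identity on a single element, I would multiply (\ref{eq:multielemformulationnd}) by $J^k \bm{M}$, test with $\bm{v}_h^T$, and use $\bm{v}_h^T \bm{M} = \bm{v}_q^T \bm{W} \bm{V}_q$ together with the continuous-in-time chain rule to obtain $\bm{1}^T J^k \bm{W}\,\tfrac{{\rm d}}{{\rm d}t}U(\bm{u}_q)$ on the left. On the right, for each $i$ I would split $2\bm{Q}^i_N = \bm{B}^i_N + (\bm{Q}^i_N - (\bm{Q}^i_N)^T)$ using (\ref{eq:sbpgeoeq}). The symmetric $\bm{B}^i_N$ piece, by consistency of $\bm{f}_{i,S}$, together with the surface flux contribution $-\tilde{\bm{v}}_f^T \bm{W}_f \diag{\bm{J}^k_f/\hat{\bm{J}}_f}\diag{\bm{n}_i}(\bm{f}^*_i - \bm{f}_i(\tilde{\bm{u}}_f))$ cancels the interior flux evaluation and produces the $-\tilde{\bm{v}}_f^T \bm{f}^*_i$ boundary contribution. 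The skew part, written as a double sum $\sum_{\ell,m}(\bm{Q}^i_N)_{\ell m}(\tilde{\bm{v}}_\ell - \tilde{\bm{v}}_m)^T \bm{f}_{i,S}(\tilde{\bm{u}}_\ell,\tilde{\bm{u}}_m)$, telescopes under the componentwise conservation condition of Definition~\ref{def:tadmor2} to $\bm{1}^T \bm{Q}^i_N \bm{\psi}_i$, and then (\ref{eq:sbpgeoeq}) together with $\bm{Q}^i_N \bm{1} = 0$ (which inherits from the reference property in Theorem~\ref{thm:sbp} since $\bm{D}^i_N$ is a constant-coefficient linear combination of the $\hat{\bm{D}}^j_N$) reduces it to $\bm{1}^T \bm{B}^i_N \bm{\psi}_i = \bm{1}^T \bm{W}_f \diag{\bm{J}^k_f/\hat{\bm{J}}_f}\diag{\bm{n}_i}\psi_i(\tilde{\bm{u}}_f)$, which is precisely the boundary entropy potential.

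To finish, I would sum these single-element identities over all $K$ elements. For interior faces shared between $D^k$ and its neighbor, writing the flux contribution via the Tadmor flux $\bm{f}^*_i = \bm{f}_{i,S}(\tilde{\bm{u}}_f^+,\tilde{\bm{u}}_f)$, symmetrizing the jump, and applying the multi-dimensional conservation condition of Definition~\ref{def:tadmor2} converts the interior-face contributions into $\tfrac{1}{2}\bm{1}^T \bm{W}_f \diag{\bm{J}^k_f/\hat{\bm{J}}_f}\diag{\bm{n}_i}(\psi_i(\tilde{\bm{u}}^+) - \psi_i(\tilde{\bm{u}}))$, which cancels in pairs since $\bm{n}_i^+ = -\bm{n}_i$ and the face Jacobian is shared. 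Only the physical-boundary contributions survive, which matches $\bm{W}^k_{\partial \Omega}$ as defined, yielding the stated equality.

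I expect the main obstacle to be the geometric bookkeeping in the skew-part telescoping step, specifically verifying that the boundary contribution picked up from $\bm{1}^T \bm{B}^i_N \bm{\psi}_i$ at each element carries the correct $\bm{J}^k_f/\hat{\bm{J}}_f$ scaling so that the metric identity $\sum_{j} J^k \bm{G}^k_{ij}\hat{\bm{n}}_j \hat{J}_f = \bm{n}_i J^k_f$ makes neighboring face contributions truly opposite; once (\ref{eq:sbpgeoeq}) is in hand this is bookkeeping, but it is the one place where the multi-dimensional proof genuinely departs from the one-dimensional template of Theorem~\ref{thm:ecmultielem}.
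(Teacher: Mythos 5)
Your proposal is correct and is essentially the paper's own argument: the paper's proof is a one-line deferral stating that stability and conservation follow by applying (\ref{eq:sbpgeoeq}) together with the one-dimensional proofs of Theorem~\ref{thm:ec1D} and Theorem~\ref{thm:ecmultielem} along each coordinate direction, and your plan is exactly that outline carried out in detail. The two points you expand beyond the paper's text --- that $\bm{Q}^i_N\bm{1} = 0$ is inherited from the reference operators because the affine mapping makes $\bm{G}^k_{ij}$ constant, and that the metric identity $\sum_{j} J^k \bm{G}^k_{ij}\hat{\bm{n}}_j \hat{J}_f = \bm{n}_i J^k_f$ with shared face Jacobians makes the interface contributions cancel --- are precisely the pieces the paper leaves implicit, and you have them right.
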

\begin{proof}
The proofs of both stability and conservation are shown by applying (\ref{eq:sbpgeoeq}) and the one-dimensional proofs of Theorem~\ref{thm:ec1D} and Theorem~\ref{thm:ecmultielem} along each of the $i$th coordinate directions.  
\end{proof}
}

\section{Numerical experiments: the compressible Euler equations}
\label{sec:num}

In this section, we illustrate the entropy conservation and accuracy of the proposed schemes for the one dimensional compressible Euler equations.  
All numerical experiments utilize the fourth order five-stage low-storage Runge-Kutta method Carpenter and Kennedy \cite{carpenter1994fourth}.  Following the derivation of stable timestep restrictions in \cite{chan2015gpu}, we define the timestep $\Delta t$ to be 
\begin{equation}
\Delta t = {\rm CFL} \times \frac{h}{C_N}, \qquad C_N = \frac{(N+1)^2}{2},
\end{equation} 
where $C_N$ is the one-dimensional constant in the trace inequality \cite{warburton2003constants}, and ${\rm CFL}$ is a user-defined constant.  

We note that the numerical implementations used here are oblivious to the choice of basis, and the discretization is specified completely by the choice of quadrature.  For example, when GLL quadratures are used, an entropy conservative/stable DG-SEM discretization is recovered \cite{carpenter2014entropy, chen2017entropy, gassner2017br1}, and when a Gauss quadrature with $(N+1)$ points is used, generalized SBP-DG methods are recovered \cite{ranocha2017extended, ranocha2017comparison}.  

\subsection{One-dimensional experiments}

The one-dimensional compressible Euler equations, which correspond to the inviscid limit of the compressible Navier-Stokes equations, are given as follows:
\begin{align}
\pd{\rho}{t} + \pd{\LRp{\rho u}}{x} &= 0,\\
\pd{\rho u}{t} + \pd{\LRp{\rho u^2 + p }}{x} &= 0,\nonumber\\
\pd{E}{t} + \pd{\LRp{u(E+p)}}{x} &= 0.\nonumber
\end{align}
We assume an ideal gas, such that the pressure satisfies the constitutive relation $p = (\gamma-1)\LRp{E - \frac{1}{2}\rho u^2}$, where $\gamma = 1.4$ is the ratio of specific heat for a diatomic gas.    

The choice of convex entropy for the Euler equations is non-unique \cite{harten1983symmetric}.  However, a unique entropy can be chosen by restricting to choices of entropy variables which symmetrize the viscous heat conduction term in the compressible Navier-Stokes equations \cite{hughes1986new}.  This leads to $U(\bm{u})$ of the form
\begin{equation}
U(\bm{u}) = -\frac{\rho s}{\gamma-1},
\end{equation}
where $s = \log\LRp{\frac{p}{\rho^\gamma}}$ is the physical specific entropy.  The entropy variables under this choice of entropy are then
\begin{align}
v_1 = \frac{E - \rho e(\gamma + 1 - s)}{\rho e}, \qquad v_2 = \frac{\rho u}{\rho e}, \qquad v_3 = -\frac{\rho}{\rho e},
\end{align}
where $\rho e = E - \frac{1}{2}\rho u^2$ is the specific internal energy.  
The inverse mapping is given by 
\begin{equation}
\rho = -(\rho e) v_3, \qquad \rho u = (\rho e) v_2, \qquad E = (\rho e)\LRp{1 - \frac{v_2^2}{2 v_3}},
\end{equation}
where $\rho e$ and $s$ in terms of the entropy variables are 
\begin{equation}
\rho e = \LRp{\frac{(\gamma-1)}{\LRp{-v_3}^{\gamma}}}^{1/(\gamma-1)}e^{\frac{-s}{\gamma-1}}, \qquad s = \gamma - v_1 + \frac{v_2^2}{2v_3}.
\end{equation}
In order for the entropy $U(\bm{u})$ to be well defined, we require the assumption that the discrete density and pressure solutions are bounded away from zero
\begin{equation}
\rho \geq \rho_0 > 0,  \qquad p \geq p_0 > 0.  \label{eq:assumption2}
\end{equation}
This can be achieved using positivity-preserving limiters \cite{zhang2010positivity, zhang2012maximum}.  However, these have not been implemented in our numerical simulations.  

Examples of entropy conservative flux functions can be found in \cite{ismail2009affordable, chandrashekar2013kinetic}.  In this work, we utilize the flux function $\bm{f}_S(\bm{u}_L,\bm{u}_R)$ introduced by Chandreshekar \cite{chandrashekar2013kinetic}, whose components are given as
\begin{align}
f^1_S(\bm{u}_L,\bm{u}_R) &= \avg{\rho}^{\log} \avg{u}\\
f^2_S(\bm{u}_L,\bm{u}_R) &= \frac{\avg{\rho}}{2\avg{\beta}} + \avg{u}f^1_S\nonumber\\
f^3_S(\bm{u}_L,\bm{u}_R) &= f^1_S\LRp{\frac{1}{2(\gamma-1)\avg{\beta}^{\log}} - \frac{1}{2}\avg{u^2}} + \avg{u}f^2_S,\nonumber
\end{align}
where we have introduced the inverse temperature $\beta$
\begin{equation}
\beta = \frac{\rho}{2p}
\end{equation}
and the logarithmic mean
\begin{equation}
\avg{u}^{\log} = \frac{u_L - u_R}{\log{u_L}- \log{u_R}}.  
\end{equation}
We note that, because the direct evaluation of the logarithmic mean is numerically sensitive for $u_L\approx u_R$, when $\LRb{u_L-u_R}<\epsilon$ we switch to evaluation using a high order accurate expansion introduced by Ismail and Roe \cite{ismail2009affordable}.  

These fluxes are both entropy conservative and kinetic energy preserving.  Unlike the shallow water equations, these entropy conservative fluxes do not correspond to stable split formulations of the Euler equations.  Thus, the compressible Euler equations serve as a test of the flux differencing formulation, as entropy stability cannot be achieved through skew-symmetry.  

We also present results which utilize the dissipative local Lax-Friedrichs flux described in Section~\ref{sec:ecdg2}, where the value of $\lambda$ is estimated by
\begin{equation}
\lambda = \max_{\bm{u}^+, \bm{u}} \LRc{\LRb{u} + c}, 
\qquad c = \sqrt{\frac{\gamma p}{\rho}}.
\end{equation}
We will refer to the combination of the entropy conservative flux with Lax-Friedrichs dissipation as the ``Lax-Friedrichs'' flux.  

\subsubsection{Smooth entropy wave solution}

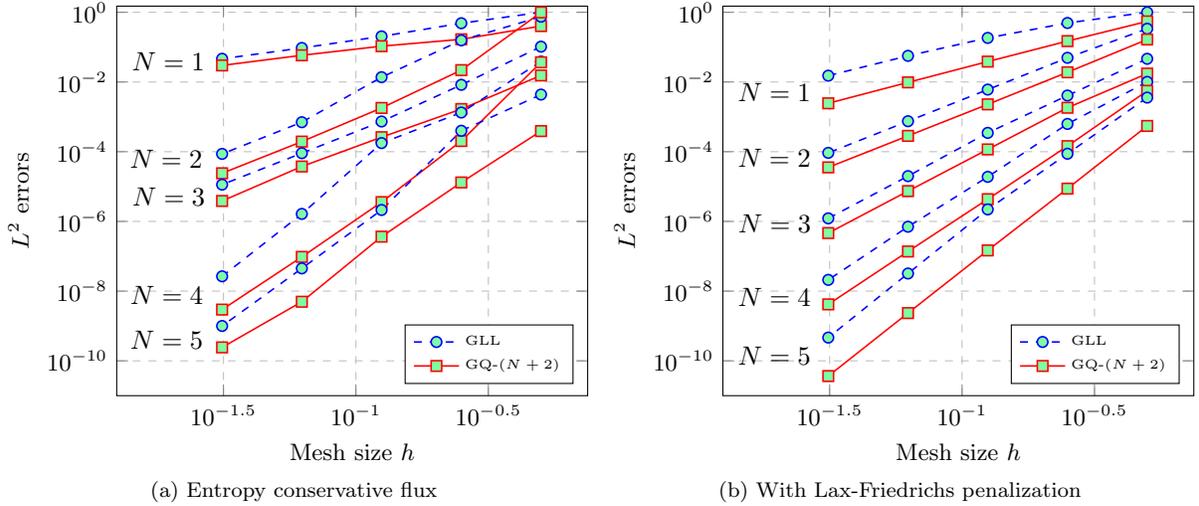
\begin{figure}
\centering
\subfloat[Entropy conservative flux]{
\begin{tikzpicture}
\begin{loglogaxis}[
    width=.475\textwidth,
    xlabel={Mesh size $h$},
    ylabel={$L^2$ errors}, 
    xmin=.0125, xmax=.75,
    ymin=1e-11, ymax=1.5,
    legend pos=south east, legend cell align=left, legend style={font=\tiny},	
    xmajorgrids=true, ymajorgrids=true, grid style=dashed,
    legend entries={GLL,GQ-$(N+2)$}    
]
\pgfplotsset{
cycle list={{blue, dashed, mark=*}, {red, mark=square*}}
}

\addplot+[semithick, mark options={solid, fill=markercolor}]
coordinates{(0.5,1)(0.25,0.485059)(0.125,0.203599)(0.0625,0.0947163)(0.03125,0.0463705)};
\addplot+[semithick, mark options={solid, fill=markercolor}]
coordinates{(0.5,0.402314)(0.25,0.167917)(0.125,0.106574)(0.0625,0.058359)(0.03125,0.0298728)}
[yshift=2pt] node[left, pos=1.025, color=black] {$N = 1$};

\addplot+[semithick, mark options={solid, fill=markercolor}]
coordinates{(0.5,0.746606)(0.25,0.156701)(0.125,0.0137392)(0.0625,0.000701926)(0.03125,8.64531e-05)};
\addplot+[semithick, mark options={solid, fill=markercolor}]
coordinates{(0.5,0.993771)(0.25,0.0219437)(0.125,0.00180028)(0.0625,0.000194939)(0.03125,2.4045e-05)}
[yshift=7pt] node[left, pos=1.025, color=black] {$N = 2$};

\addplot+[semithick, mark options={solid, fill=markercolor}]
coordinates{(0.5,0.103299)(0.25,0.00829887)(0.125,0.00073573)(0.0625,9.05975e-05)(0.03125,1.13596e-05)};
\addplot+[semithick, mark options={solid, fill=markercolor}]
coordinates{(0.5,0.0154054)(0.25,0.00167426)(0.125,0.000260859)(0.0625,3.76182e-05)(0.03125,3.86238e-06)}
[yshift=3pt] node[left, pos=1.025, color=black] {$N = 3$};

\addplot+[semithick, mark options={solid, fill=markercolor}]
coordinates{(0.5,0.0385542)(0.25,0.00133048)(0.125,0.000176663)(0.0625,1.64135e-06)(0.03125,2.66024e-08)};
\addplot+[semithick, mark options={solid, fill=markercolor}]
coordinates{(0.5,0.0367592)(0.25,0.000202817)(0.125,3.57758e-06)(0.0625,9.58294e-08)(0.03125,2.94985e-09)}[yshift=8pt] node[left, pos=1.025, color=black] {$N = 4$};

\addplot+[semithick, mark options={solid, fill=markercolor}]
coordinates{(0.5,0.00436131)(0.25,0.00039846)(0.125,2.1282e-06)(0.0625,4.49046e-08)(0.03125,9.99912e-10)};
\addplot+[semithick, mark options={solid, fill=markercolor}]
coordinates{(0.5,0.000390565)(0.25,1.31188e-05)(0.125,3.6544e-07)(0.0625,4.95271e-09)(0.03125,2.42763e-10)}
[yshift=5pt] node[left, pos=1.025, color=black] {$N = 5$};

\end{loglogaxis}
\end{tikzpicture}
}
\subfloat[With Lax-Friedrichs penalization]{
\begin{tikzpicture}
\begin{loglogaxis}[
    width=.475\textwidth,
    xlabel={Mesh size $h$},  ylabel={$L^2$ errors}, 
    xmin=.0125, xmax=.75,
    ymin=1e-11, ymax=1.5,
    legend pos=south east, legend cell align=left, legend style={font=\tiny},	
    xmajorgrids=true, ymajorgrids=true, grid style=dashed,
    legend entries={GLL,GQ-$(N+2)$}
] 
\pgfplotsset{
cycle list={{blue, dashed, mark=*}, {red, mark=square*}}
}

\addplot+[semithick, mark options={solid, fill=markercolor}]
coordinates{(0.5,1)(0.25,0.4932)(0.125,0.183839)(0.0625,0.0562398)(0.03125,0.0151873)};
\addplot+[semithick, mark options={solid, fill=markercolor}]
coordinates{(0.5,0.547558)(0.25,0.148981)(0.125,0.0384647)(0.0625,0.00974763)(0.03125,0.00244539)}
[yshift=5pt] node[left, pos=1.025, color=black] {$N = 1$};

\addplot+[semithick, mark options={solid, fill=markercolor}]	
coordinates{(0.5,0.336817)(0.25,0.04941)(0.125,0.00605428)(0.0625,0.000748842)(0.03125,9.25456e-05)};
\addplot+[semithick, mark options={solid, fill=markercolor}]
coordinates{(0.5,0.165807)(0.25,0.0190013)(0.125,0.00227903)(0.0625,0.00028425)(0.03125,3.54865e-05)}
[yshift=5pt] node[left, pos=1.025, color=black] {$N = 2$};

\addplot+[semithick, mark options={solid, fill=markercolor}]
coordinates{(0.5,0.0463242)(0.25,0.00408748)(0.125,0.000346831)(0.0625,1.99064e-05)(0.03125,1.22357e-06)};
\addplot+[semithick, mark options={solid, fill=markercolor}]
coordinates{(0.5,0.0174194)(0.25,0.00182234)(0.125,0.000116147)(0.0625,7.39839e-06)(0.03125,4.6305e-07)}
[yshift=5pt] node[left, pos=1.025, color=black] {$N = 3$};

\addplot+[semithick, mark options={solid, fill=markercolor}]
coordinates{(0.5,0.0100716)(0.25,0.000625923)(0.125,1.89866e-05)(0.0625,7.03865e-07)(0.03125,2.10265e-08)};
\addplot+[semithick, mark options={solid, fill=markercolor}]
coordinates{(0.5,0.00556743)(0.25,0.000144595)(0.125,4.33972e-06)(0.0625,1.37151e-07)(0.03125,4.16335e-09)}
[yshift=5pt] node[left, pos=1.025, color=black] {$N = 4$};

\addplot+[semithick, mark options={solid, fill=markercolor}]
coordinates{(0.5,0.00356628)(0.25,8.73125e-05)(0.125,2.20528e-06)(0.0625,3.20127e-08)(0.03125,4.63639e-10)};
\addplot+[semithick, mark options={solid, fill=markercolor}]
coordinates{(0.5,0.000547621)(0.25,8.7194e-06)(0.125,1.47105e-07)(0.0625,2.34345e-09)(0.03125,3.65306e-11)}
[yshift=10pt] node[left, pos=1.025, color=black] {$N = 5$};

\end{loglogaxis}
\end{tikzpicture}
}
\caption{$L^2$ errors under mesh refinement for entropy conservative and Lax-Friedrichs fluxes under both Gauss-Legendre-Lobatto (GLL) and over-integrated $(N+2)$ point Gauss quadrature (GQ-$(N+2)$).  Both sets of errors are evaluated using a GQ-$(N+5)$ quadrature rule. }
\label{fig:convergence}
\end{figure}
We begin by verifying the high order accuracy of the proposed methods using a periodic entropy wave solution
\begin{equation}
\rho(x,t) = 2 + \sin\LRp{\pi (x - t)}, \qquad u(x,t) = 1, \qquad p(x,t) = 1.
\end{equation}
We compute the $L^2$ error in the conservative variables 
\begin{equation}
\nor{\bm{u} - \bm{u}_h}_{L^2}^2 = \nor{\rho - \rho_h}_{L^2}^2 + \nor{\rho u - \rho u_h}_{L^2}^2 + \nor{E - E_h}_{L^2}^2
\end{equation}
at final time $T = .7$ using both the entropy conservative and local Lax-Friedrichs fluxes and a CFL of .125.  For these experiments, we compare two quadrature rules: 
\begin{enumerate}
\item the $(N+1)$ point Gauss-Legendre-Lobatto rule (referred to as ``GLL''),
\item an over-integrated $(N+2)$ point Gauss quadrature rule (referred to as ``GQ-$(N+2)$'').  
\end{enumerate}
The $L^2$ error is evaluated using a more accurate $N+5$ point Gauss quadrature rule.  

\begin{table}[h!]
\centering
 \begin{tabular}{||c | c |c |c |c | c|} 
 \hline
 &  $N=1$ &  $N=2$ &  $N=3$ &  $N=4$ &  $N=5$\\
 \hline
 GLL (entropy conservative) & 1.0672 & 3.6561  &  3.0086 &   6.3486 &   5.5278\\ 
 \hline
 GQ-$(N+2)$ (entropy conservative) &  0.9175  &  3.1132  &  3.0388 &   5.1221 &   5.2779 \\ 
 \hline
 GLL (Lax-Friedrichs) & 1.8887  &  3.0164  &  4.0241 &   5.0650 &   6.1095\\ 
 \hline
 GQ-$(N+2)$ (Lax-Friedrichs) & 1.9950  &  3.0018  &  3.9980  &  5.0419  &  6.0034\\ 
 \hline
 \end{tabular}
 \caption{Computed asymptotic convergence rates of $L^2$ errors under mesh refinement.  {Even-odd decoupling of the convergence rates are observed for the entropy conservative flux, while optimal convergence rates are observed for both GLL and GQ-$(N+2)$ quadratures when using Lax-Friedrichs penalization.}}
 \label{tab:rates}
\end{table}

Figure~\ref{fig:convergence} shows computed $L^2$ errors under mesh refinement for both GLL and GQ-$(N+2)$ quadrature rules with and without Lax-Friedrichs penalization.  We do not compare against the $(N+1)$ point Gauss quadrature rule or quadrature rules with $N_q > (N+2)$, as the $L^2$ errors are very similar to the GQ-$(N+2)$ case.  Computed asymptotic convergence rates are reported in Table~\ref{tab:rates}.  It can be observed that, for both GLL and GQ-$(N+2)$ quadratures, the entropy conservative flux exhibits suboptimal convergence rates for odd orders, while the inclusion of Lax-Friedrichs penalization restores the optimal $O(h^{N+1})$ convergence rate for both quadrature choices.

\subsubsection{Discontinuous profile on a periodic domain}

Next, we examine the discrete evolution of entropy by evolving a discontinuous initial profile to final time $T=2$ on the domain $[-1,1]$.  We initialize the density and velocity to be 
\begin{equation}
\rho(x,t) = \begin{cases}
3 & \LRb{x} < 1/2\\
2 & \text{otherwise},
\end{cases} \qquad 
u(x,t) = 0, \qquad
p(x,t) = \rho^\gamma.
\label{eq:discontin}
\end{equation}
Periodic boundary conditions are enforced in order to examine the evolution of entropy over longer time periods.  Figure~\ref{subfig:sol} shows $\rho, u$ at time $T = 1/10$ using both entropy conservative and Lax-Friedrichs fluxes (referred to in the figure as ``EC'' and ``LF'', respectively) and GQ-$(N+2)$ quadrature.  As expected, the entropy conservative flux results in spurious high frequency oscillations, which are significantly damped under Lax-Friedrichs penalization.  

We next examine the change in entropy over time.  While the proof of conservation of entropy holds at the semi-discrete level, it does not take into account the time discretization and thus does not hold at the fully discrete level.  This is reflected in the observation that the numerical change in entropy 
\begin{equation}
\Delta U(t) = U\LRp{\bm{u}(x,t)} - U\LRp{\bm{u}(x,0)}
\end{equation}
increases as $t$ increases.  However, numerical experiments in \cite{gassner2016well} suggest that the discrete change in entropy over time should converge to zero as the timestep decreases.

Figure~\ref{subfig:dS} shows the evolution of the integral of $\Delta U(t)$ to final time $T=2$ for both entropy conservative and Lax-Friedrichs fluxes at various CFL numbers {using a GQ-$(N+2)$ quadrature rule.  We focus on this rule, as the conservation of entropy for the GLL quadrature rule has been established in the literature theoretically and numerically for the compressible Euler equations \cite{fisher2013high, gassner2017br1, chen2017entropy}.  Additionally, we note that the semi-discrete conservation of the integrated entropy depends on the strength of the quadrature rule utilized.  Comparisons between GLL and GQ-$(N+2)$ quadrature rules introduce inconsistency due to the fact that the former rule is exact for degree $2N-1$ polynomials, while the latter is a stronger rule and is exact for $2N+3$ polynomials. }
   
For the entropy conservative flux, we observe that $\Delta U(t)$ decreases as the CFL and timestep $\Delta t$ decrease.  This is expected, since the discrete time problem should converge to the continuous semi-discrete problem (for which $\Delta U(t) = 0$) as $\Delta t\rightarrow 0$.  We also observe that $\Delta U(t)$ does not change significantly as a function of the CFL for the Lax-Friedrichs flux, indicating that the non-zero change in entropy in this case is due to the effect of the dissipative flux rather than the time discretization.  {We also compute the convergence rate of $\Delta \bm{U}(T)$ to zero with respect to the timestep $\Delta t$, as shown in Figure~\ref{subfig:dURate}.  Despite the fact that a fourth order time-stepper is used, we observe nearly fifth order convergence.  This phenomena is not observed in two dimensions, as we show in Section~\ref{sec:2d}.}

\begin{figure}
\centering
\subfloat[Solution at time $T = .1$]{
\includegraphics[width=.32\textwidth]{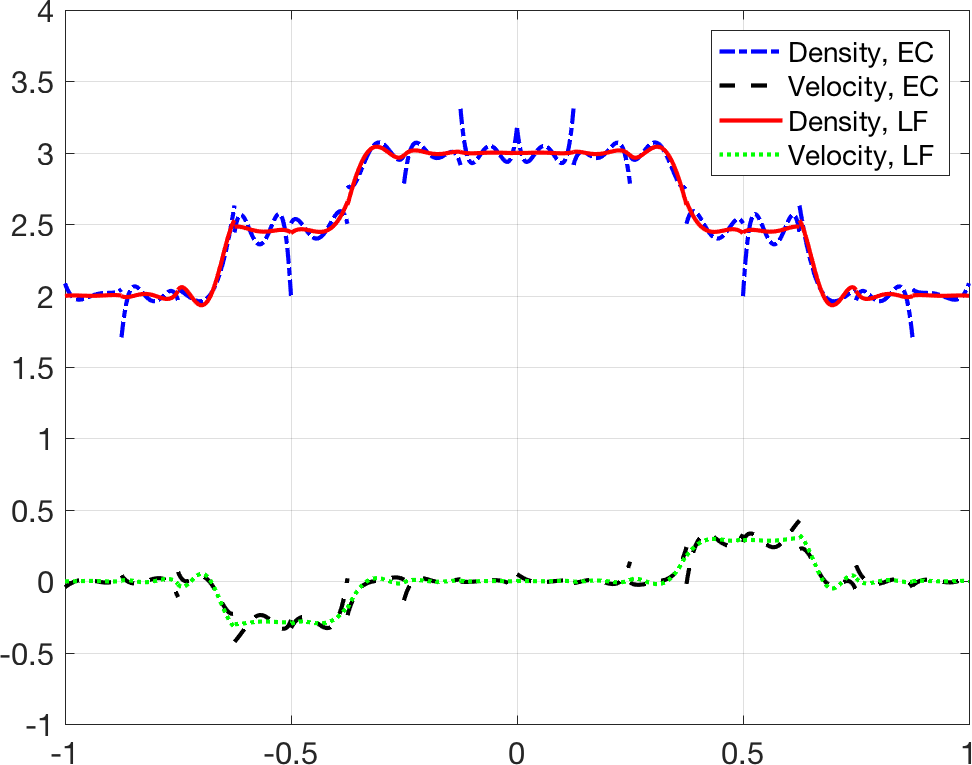}
\label{subfig:sol}}
\subfloat[$\Delta U(t)$]{
\includegraphics[width=.32\textwidth]{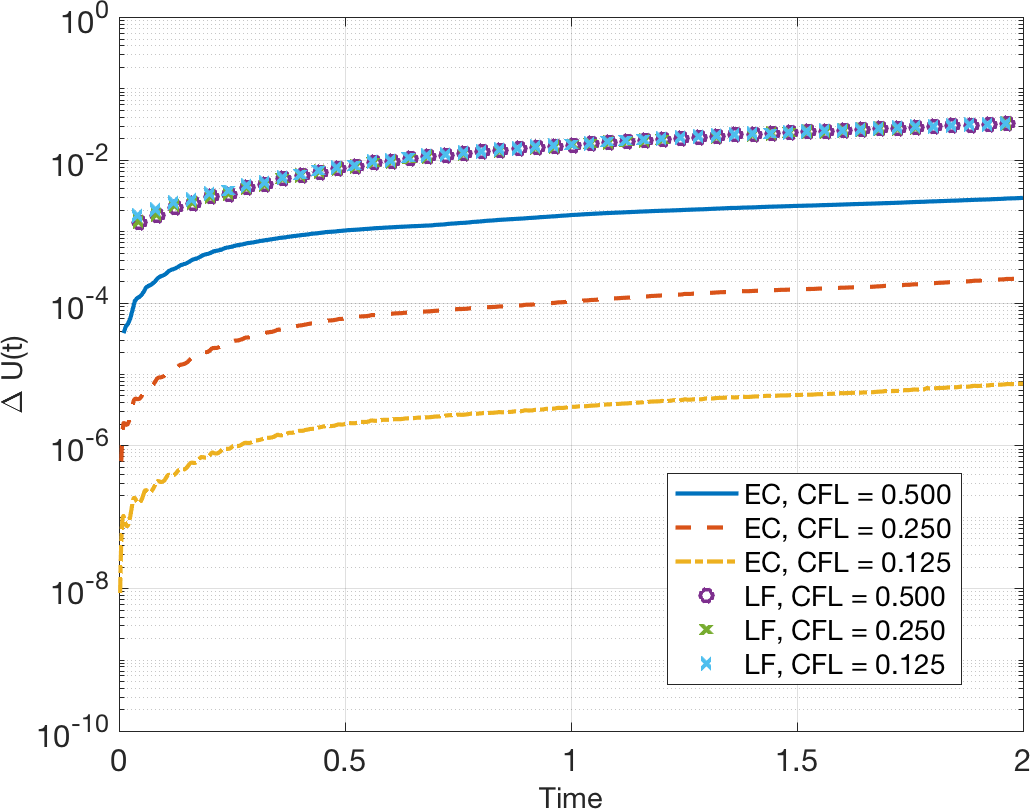}
\label{subfig:dS}}
\subfloat[Convergence of $\Delta U(T)$ with $\Delta t$]{
\begin{tikzpicture}
\begin{loglogaxis}[
    width=.325\textwidth,
    xlabel={Time step size $\Delta t$},
    legend pos=south east, legend cell align=left, legend style={font=\tiny},	
    xmajorgrids=true, ymajorgrids=true, grid style=dashed,
]

\addplot+[semithick, mark options={solid, fill=markercolor}]
coordinates{(0.005,0.0012)(0.0025,8.7262e-05)(0.00125,2.9252e-06)(0.000625,9.39239e-08)};
\logLogSlopeTriangleFlip{0.45}{0.225}{0.3}{4.93}{blue}
\end{loglogaxis}
\end{tikzpicture}
\label{subfig:dURate}
}
\caption{Solution snapshot and change in entropy $\Delta U(t)$ for both entropy conservative (EC) and Lax-Friedrichs (LF) fluxes {using a GQ-$(N+2)$ quadrature rule}.  The convergence of the change in entropy $\Delta U(t)$ at the final time $T = 2$ converges to zero as $O\LRp{\Delta t^{4.93}}$, which is greater than the order of the 4th order time-stepper used. }
\label{fig:dS}
\end{figure}

We also numerically evaluate the spatial formulation tested against the projected entropy variables
\begin{equation}
\delta(t) = \LRb{\LRp{\left.\LRp{D^x_h \bm{f}_S(\tilde{\bm{u}}(x),\tilde{\bm{u}}(y))}\right|_{y=x},\Pi_N \bm{v}}_{\Omega}}, \qquad  \qquad 0\leq t \leq T.
\end{equation}
We utilize the non-dissipative entropy conservative flux and evolve the initial profile (\ref{eq:discontin}) until time $T = 1$ using a GQ-$(N+2)$ quadrature rule.  From the proof of entropy conservation, we expect $\delta_{\max} = \max_{t\in (0,T)}{\delta}$ to be machine zero.  In practice, we have found that $\delta_{\max}$ depends on the tolerance $\epsilon$ used in evaluation of the logarithmic mean.  For a simulation to time $T=4$ using $N=4$ and $K = 16$ with a CFL of $1/2$, we observe that using $\epsilon = 10^{-2}$ (as recommended in \cite{ismail2009affordable}) results in $\delta_{\max} = O\LRp{10^{-10}}$.  Decreasing $\epsilon$ to $10^{-3}$ reduces $\delta_{\max}$ to $O\LRp{10^{-14}}$, and decreasing $\epsilon$ further to $1\times 10^{-4}$ reduces $\delta_{\max}$ to $10^{-15}$.  Smaller values of $\epsilon$ do result in observable changes to $\delta_{\max}$, and we do not observe any significant dependence of $\delta_{\max}$ on other discretization parameters.  

\begin{figure}
\centering
\includegraphics[width=.525\textwidth]{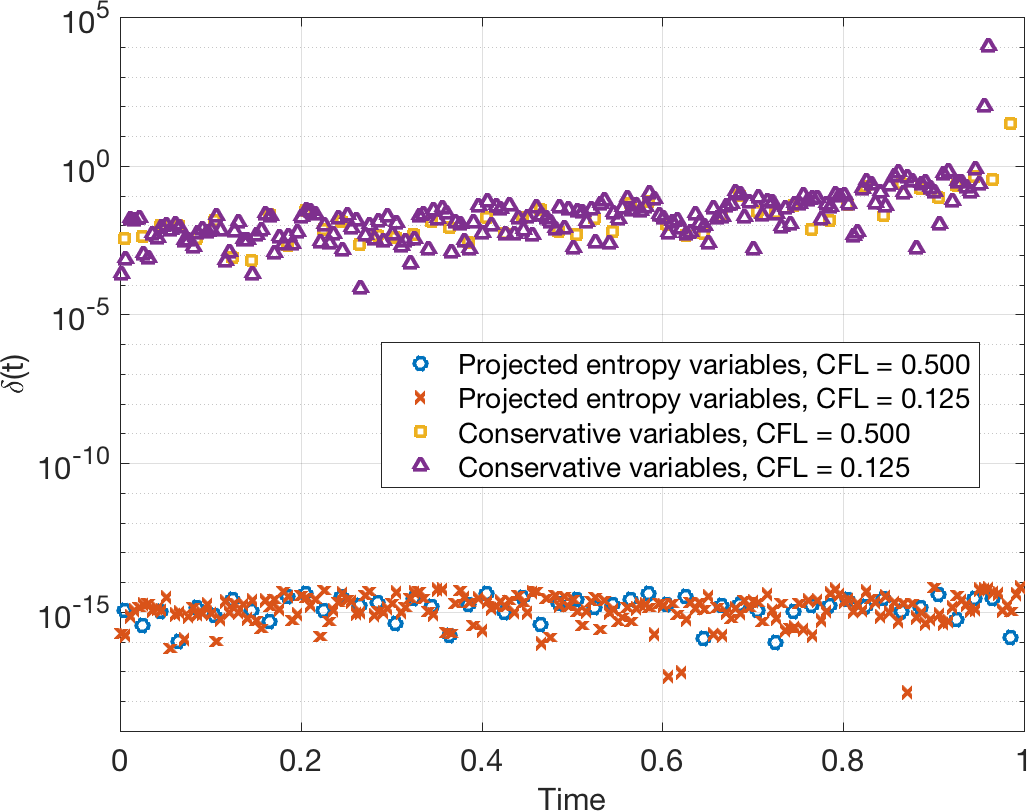}
\caption{Comparison of $\delta(t)$ when evaluating the flux function $\bm{f}_S$ using conservative variables and projected entropy variables.  {A GQ-$(N+2)$ quadrature rule is used.} }
\label{fig:rhstest}
\end{figure}
Next, in order to illustrate the importance of evaluating the flux in terms of the projected entropy variables $\Pi_N \bm{v}$, we compute $\delta(t)$ while evaluating the flux function $\bm{f}_S$ directly in terms of the conservative variables $\bm{u}(x)$ and in terms of the entropy-projected conservative variables $\bm{u}\LRp{(\Pi_N \bm{v})(x)}$.  It can be observed from Figure~\ref{fig:rhstest} that $\delta(t)$ is near machine precision when evaluating the flux in terms of the projected entropy variables.  When evaluating the flux function directly in terms of the conservative variables, $\delta(t)$ begins near $10^{-4}$ but grows steadily, blowing up exponentially near $t = 1$.  

\subsubsection{Sod shock tube}

We now examine the behavior of the proposed DG methods for some common one-dimensional test problems.  We begin with the Sod shock tube, which is posed on the domain $[-1/2,1/2]$ with initial conditions
\begin{equation}
\rho = \begin{cases}
1 & x < 0\\
.125 & x \geq 0,
\end{cases} 
\qquad
u = 0, \qquad
p = \begin{cases}
1 & x < 0\\
.1 & x \geq 0.
\end{cases}
\end{equation}
Boundary conditions are enforced by taking the external value $\bm{u}^+$ in the numerical flux to be that of the initial condition at $x = \pm 1$.  The solution develops a left-moving rarefaction, as well as a right moving shock wave and a contact discontinuity.  We simulate the solution until time $T = .2$, without the use of positivity preserving or TVD-type limiters.  For all choices of quadrature tested, the solution diverges when using the entropy conservative flux, which is a result of oscillations in the solution and density and temperature becoming negative.  This is remedied when using the dissipative Lax-Friedrichs flux, for which we do not observe blowup of the solution.  
\begin{figure}
\centering
\subfloat[GLL quadrature]{\includegraphics[width=.475\textwidth]{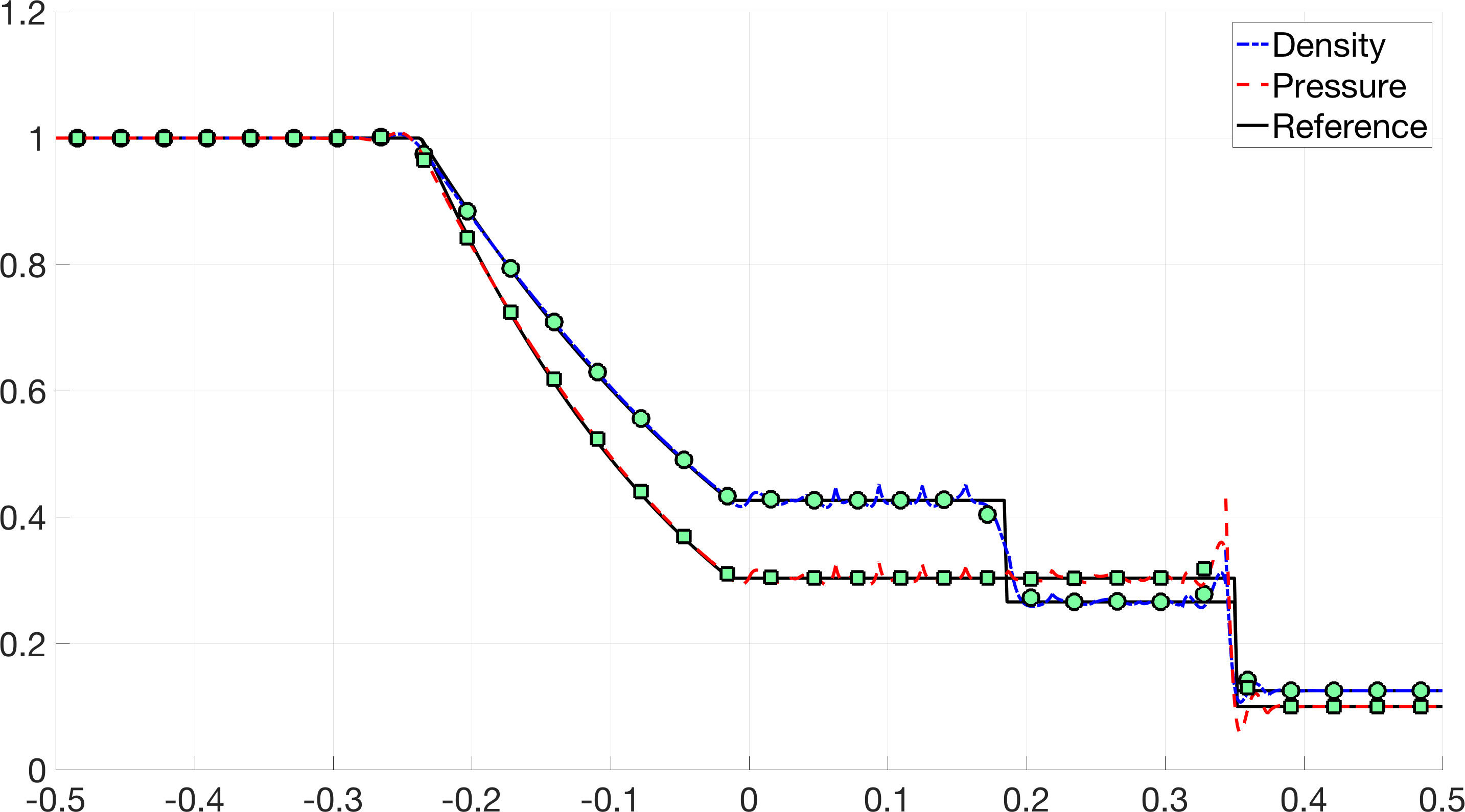}}
\hspace{.1em}
\subfloat[GQ-$(N+2)$ quadrature]{\includegraphics[width=.475\textwidth]{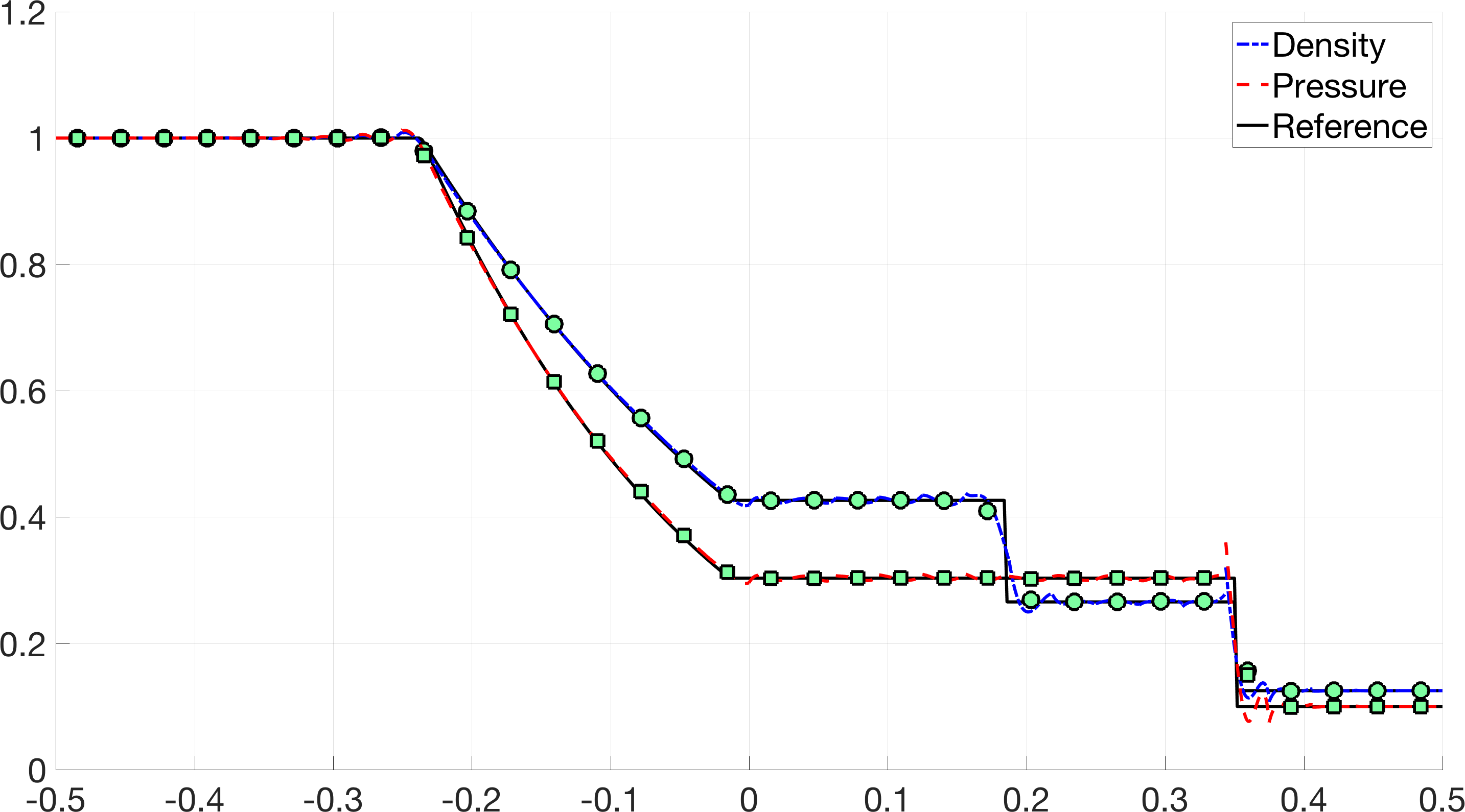}}
\caption{Density and pressure solutions for the Sod shock tube at $T = .2$ for $N = 4$ and $K = 32$ elements.  Results for both GLL and GQ-$(N+2)$ quadratures are shown.  Cell averages are overlaid as filled circles.   Both simulations are run using a CFL of $.125$. }
\label{fig:sod}
\end{figure}

Figure~\ref{fig:sod} shows both the exact solution and the computed density and pressure along with their cell averages.  These results were obtained using a CFL of $.125$ and GLL and GQ-$(N+2)$ quadrature rules.  For both quadratures, the cell averages agree relatively well with the exact solution.  Both solutions contain spurious oscillations, though the oscillations under GQ-$(N+2)$ quadrature appear qualitatively smoother and smaller.  


\subsubsection{Sine-shock interaction}
\label{sec:sineshock}

The next benchmark problem we consider is the sine-shock interaction problem, which is posed on domain $[-5,5]$ with initial conditions
\begin{align}
\rho(x,0) &= \begin{cases}
3.857143 & x < -4\\
1 + .2\sin(5x) & x \geq -4,
\end{cases} \\
u(x,0) &= \begin{cases}
2.629369 & x < -4\\
0 & x \geq -4,
\end{cases}
\qquad
p(x,0) = \begin{cases}
10.3333 & x < -4\\
1 & x \geq -4.
\end{cases}\nonumber
\end{align}
We simulate the solution using different quadrature rules with $N=4$, $K=40$ elements, and a Lax-Friedrichs flux.  No TVD or positivity-preserving limiters are applied.  A smaller CFL of $.05$ is used, and is necessary to avoid solution divergence when using GQ-$(N+2)$ quadrature.  It should be pointed out that, for GLL quadrature, it is possible to use a larger CFL of $.125$ without observing solution blowup.  The reason for this discrepancy is the sensitivity of the evaluation $\bm{u}\LRp{\Pi_N \bm{v}}$ when $\Pi_N \bm{v}$ differs significantly from $\bm{v}$, and is described in more detail in Section~\ref{sec:instab}.  
\begin{figure}
\centering
\subfloat[GLL quadrature]{\includegraphics[width=.475\textwidth]{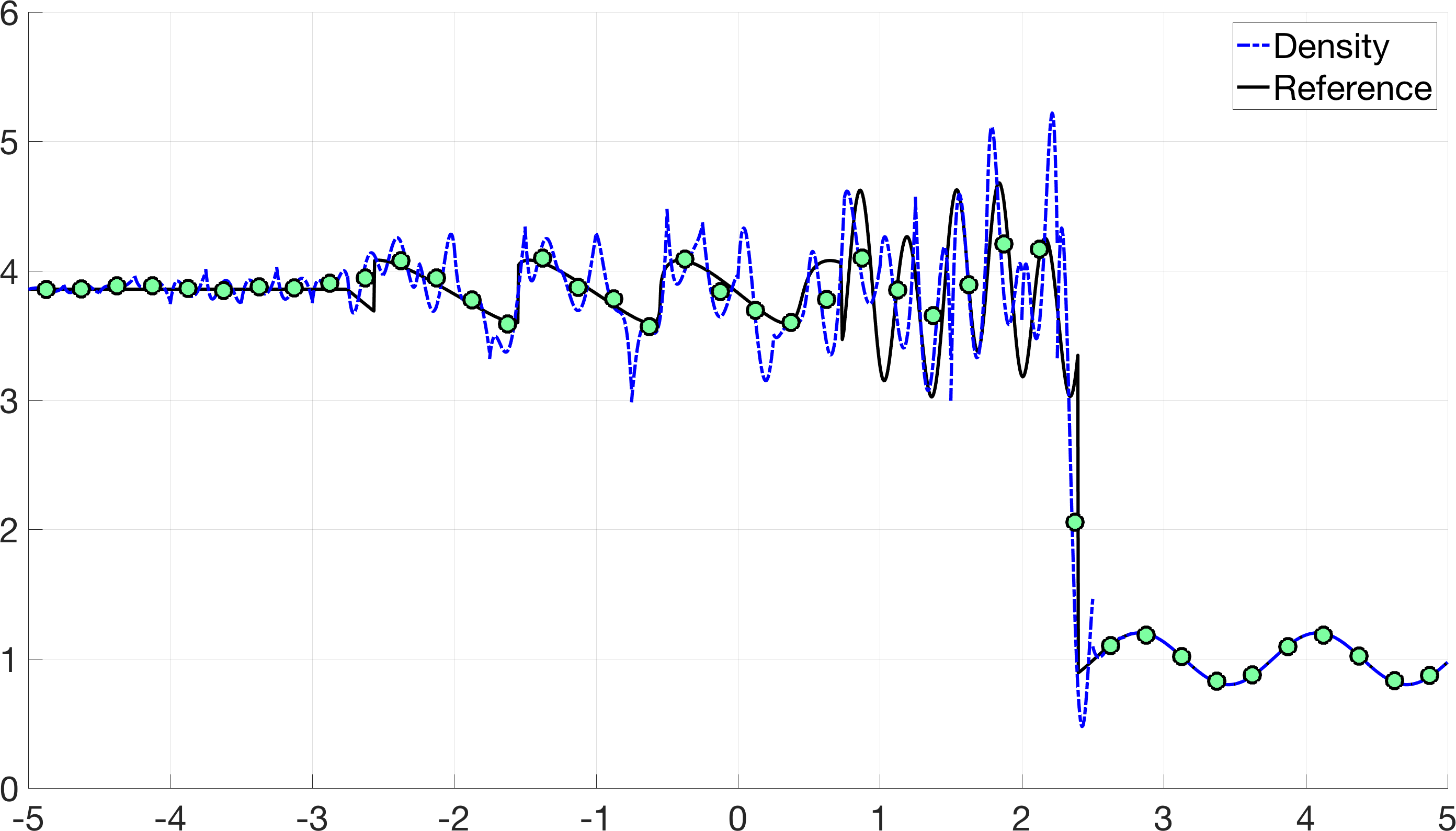}}
\hspace{.1em}
\subfloat[GQ-$(N+2)$ quadrature]{\includegraphics[width=.475\textwidth]{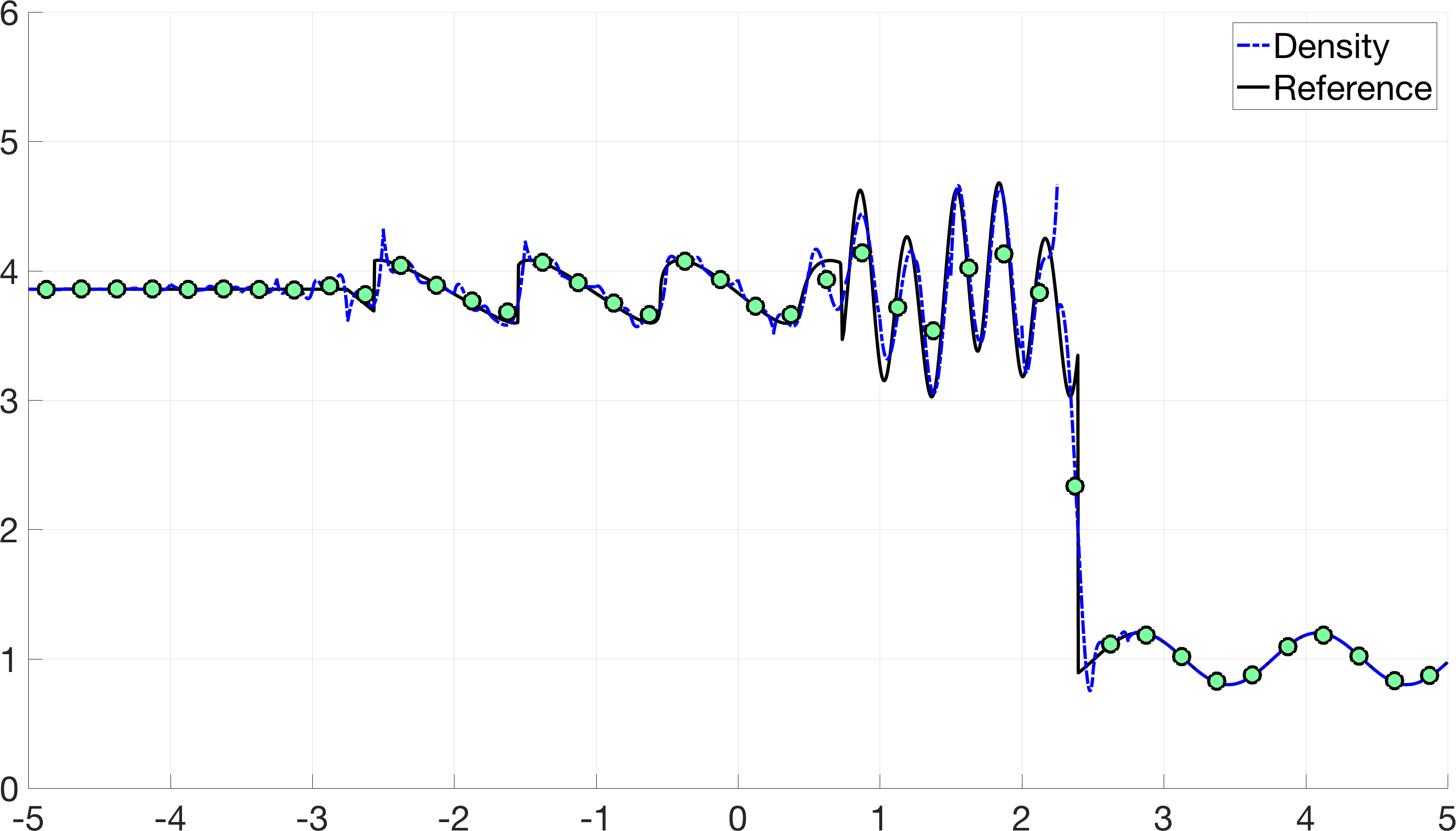}}
\caption{Density solutions for the sine-shock interaction problem at $T = 1.8$ for $N = 4$ and $K = 40$ elements.  Results for both GLL and GQ-$(N+2)$ quadratures are shown.  Cell averages are overlaid as filled circles.  Both simulations are run using a CFL of $.05$, though the GLL simulation is stable for a larger CFL of $.125$.}
\label{fig:sineshock}
\end{figure}

Figure~\ref{fig:sineshock} shows snapshots of the density at final time $T= 1.8$ for both GLL and GQ-$(N+2)$ quadrature, along with a reference solution computed using a 5th order WENO scheme with 25000 cells \cite{shu2009high}.  For both GLL and GQ-$(N+2)$ quadrature, the cell averages are close to the reference solution, though the solutions still contain spurious oscillations resulting from the presence of shocks and discontinuities.  However, as with the Sod shock tube, we observe that these oscillations are smoother and smaller in amplitude for the choice of GQ-$(N+2)$ quadrature.  


\subsubsection{Sensitivity of evaluation in terms of the projected entropy variables}
\label{sec:instab}

The numerical experiments in previous sections suggest that solutions computed using Gauss quadrature rules can be more accurate than those computed using GLL quadratures.  However, we also observe that, for the sine-shock interaction problem, a larger CFL of $.125$ can be taken when using GLL quadrature, whereas a much smaller CFL of $.05$ is required to prevent solution blowup when using GQ-$(N+1)$ and GQ-$(N+2)$ quadratures.  
For GQ-$(N+1)$ and GQ-$(N+2)$ quadratures, solution spikes can occur when evaluating the entropy-projected conservative variables at surface points.  The use of GLL quadrature avoids this phenomena because of two factors: the equivalence between interpolation and projection under a $(N+1)$ point quadrature rule, and the presence of boundary points in GLL quadrature.  

Figure~\ref{fig:proj} shows snapshots of different variables for $N=2$ and $K = 20$ elements at the fifth Runge-Kutta stage of the first timestep (just prior to the detection of negative density and pressure values) using a GQ-$(N+2)$ quadrature rule.  Discrepancies between the evaluated and projected values of the entropy variables $v_1, v_2, v_3$ at element boundaries are present, and while these discrepancies are not extremely large, they produce large spikes in the conservative variables due to the sensitivity of the nonlinear evaluation $\bm{u}\LRp{\Pi_N \bm{v}}$.  In particular, because $v_3$ appears in the denominators of entropy and $\rho e$ (as functions of the entropy variables), values of $v_3$ near zero result in large values of density and energy.  These spikes result in large oscillations in the solution if the CFL is too large, which eventually cause the density and pressure to become negative at quadrature or boundary points.  
\begin{figure}
\centering
\subfloat[$v_1$ and $\Pi_N(v_1)$]{\includegraphics[width=.32\textwidth]{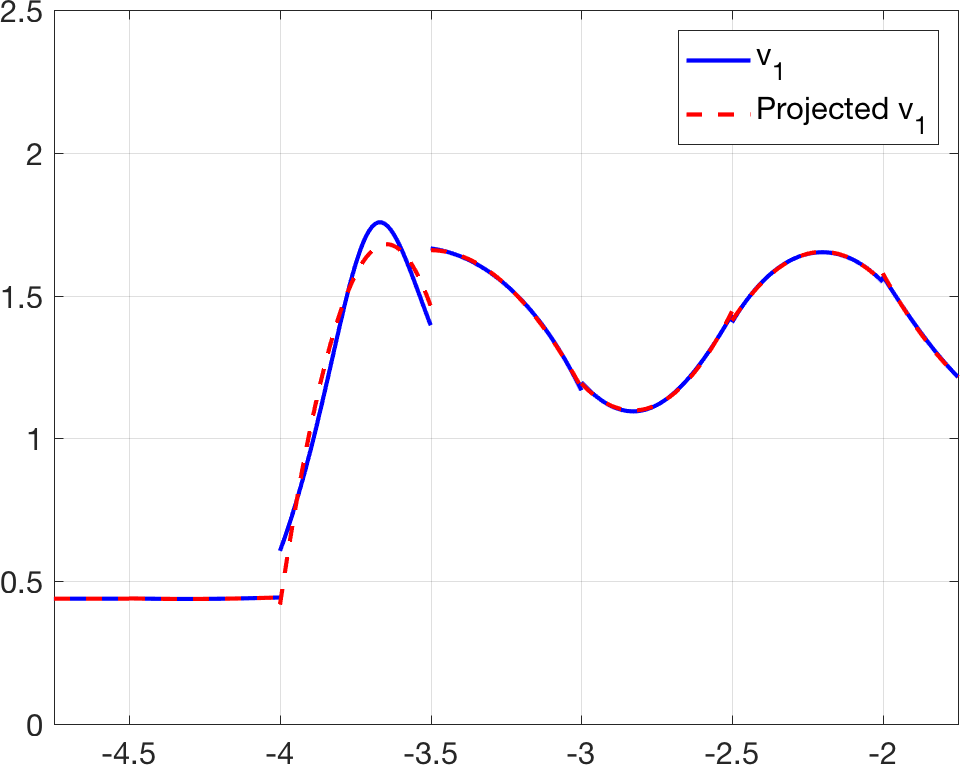}}\label{subfig:v1}
\hspace{.1em}
\subfloat[$v_2$ and $\Pi_N(v_2)$]{\includegraphics[width=.32\textwidth]{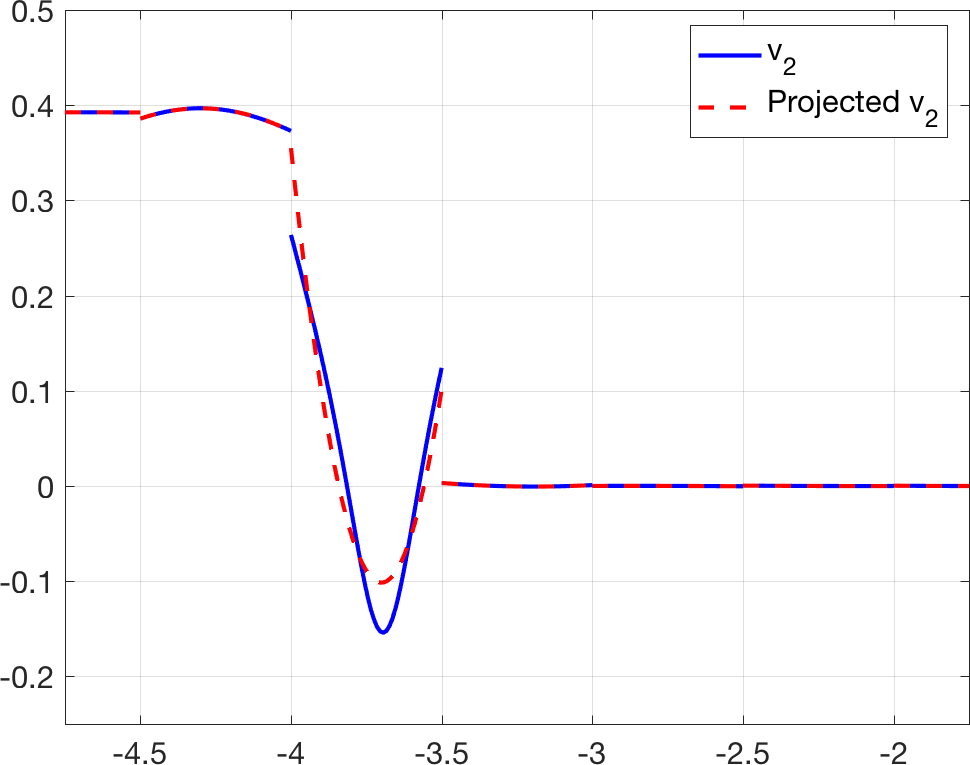}}\label{subfig:v2}
\hspace{.1em}
\subfloat[$v_3$ and $\Pi_N(v_3)$]{\includegraphics[width=.32\textwidth]{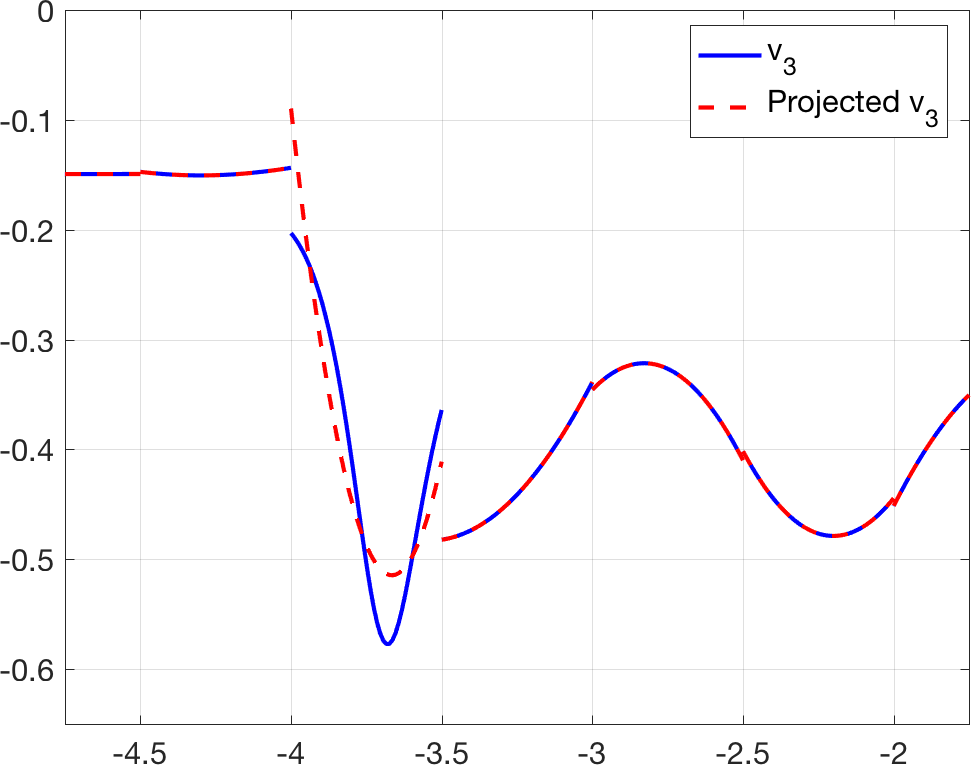}}\label{subfig:v3}\\
\subfloat[$\rho(x)$ and $\rho\LRp{\Pi_N \bm{v}}$]{\includegraphics[width=.32\textwidth]{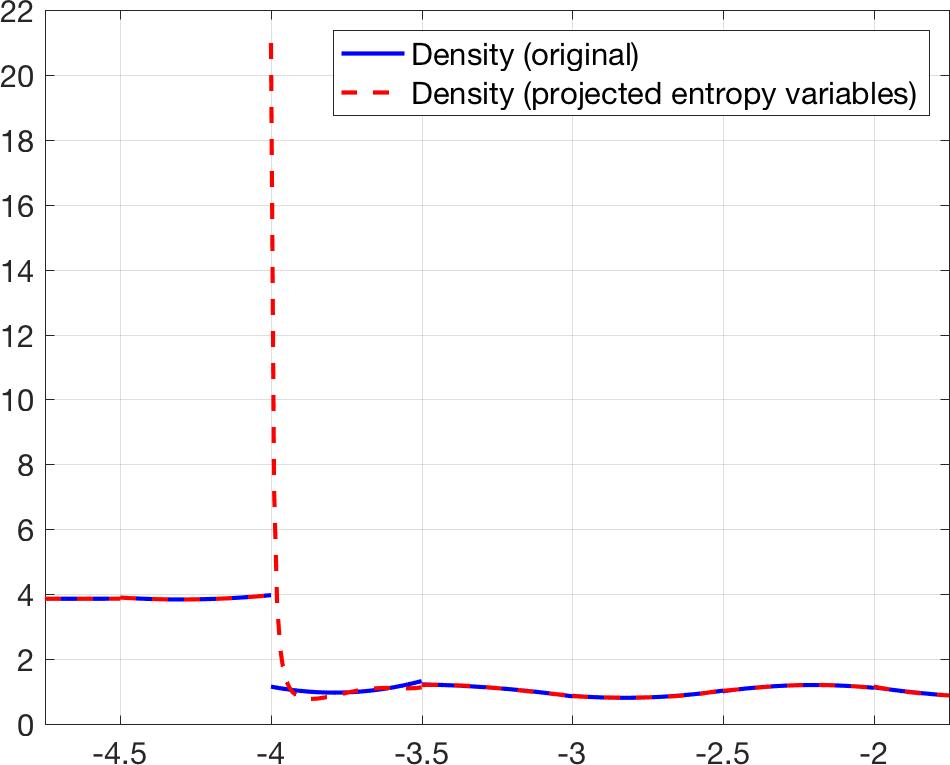}}\label{subfig:u1}
\hspace{.1em}
\subfloat[$u(x)$ and $u\LRp{\Pi_N \bm{v}}$]{\includegraphics[width=.32\textwidth]{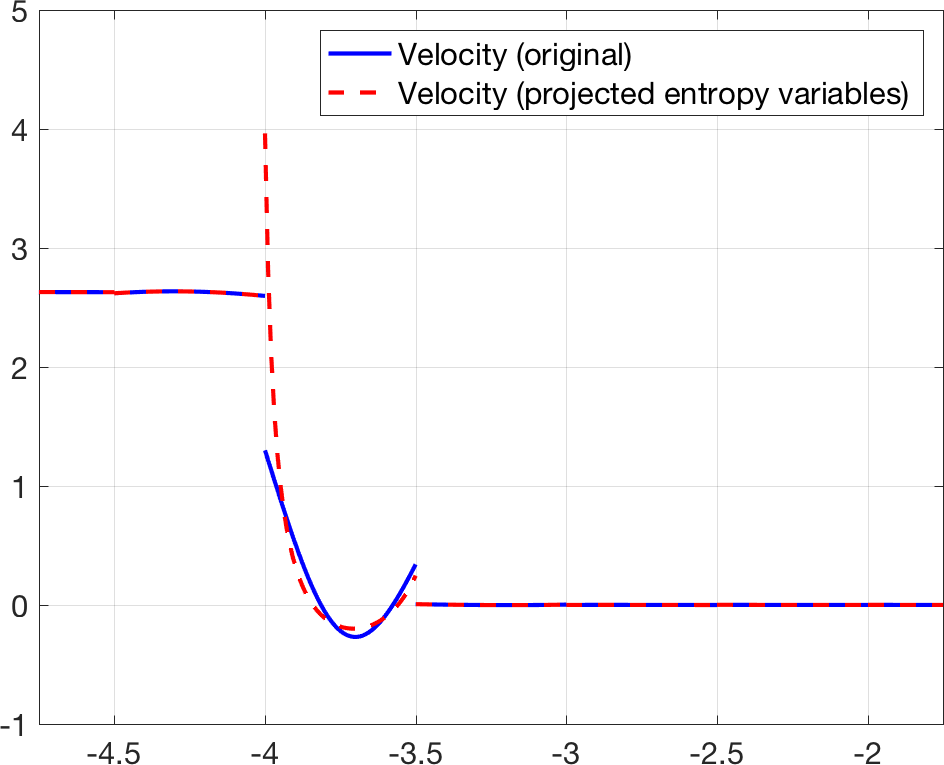}}\label{subfig:u2}
\hspace{.1em}
\subfloat[$p(x)$ and $p\LRp{\Pi_N \bm{v}}$]{\includegraphics[width=.32\textwidth]{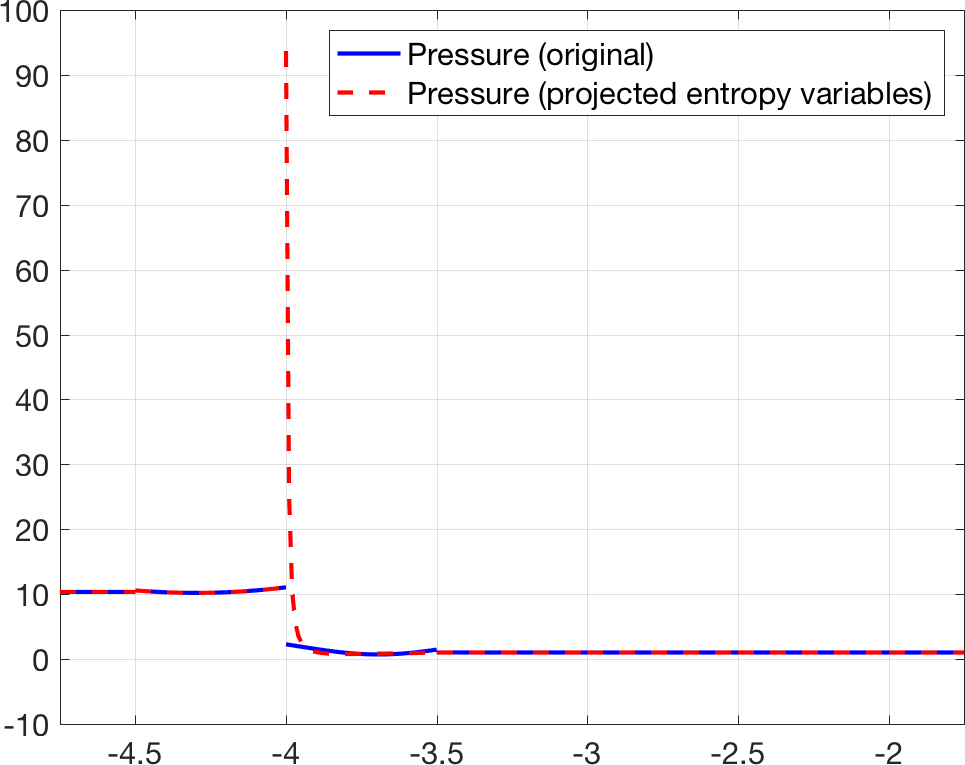}}\label{subfig:u3}
\caption{Top row: comparisons of entropy variables (evaluated directly as functions of conservative variables) and projected entropy variables (using $N=2$, $K=20$ elements, and a GQ-$(N+2)$ quadrature rule) for the sine-shock interaction before the detection of negative density and pressures.  Bottom row: comparisons of the conservative variables and their evaluations in terms of the projected entropy variables.  {The overshoot in the $L^2$ projection of $v_3$ produces a large spike in the entropy-projected conservation variables $\bm{u}\LRp{\Pi_N \bm{v}}$.}}
\label{fig:proj}
\end{figure}

Qualitatively similar behavior is observed when using a GQ-$(N+1)$ quadrature rule, though negative density and pressure values occur at a slightly later time.  In this case, the $L^2$ projection reduces to interpolation at interior Gauss points.  This still implies that $\bm{u}\LRp{\Pi_N\bm{v}}$ does not necessarily agree with $\bm{u}$ at element boundaries, and this discrepancy again leads to spikes in the conservative variables similar to those presented in Figure~\ref{fig:proj}.  These experiments suggest that the lack of control of boundary values in $\Pi_N \bm{v}$ leads to large oscillations or spikes in the boundary values of $\bm{u}\LRp{\Pi_N\bm{v}}$.  The presence of boundary nodes in {the $(N+1)$ point} GLL quadrature avoids this issue: $\bm{u}\LRp{\Pi_N\bm{v}}$ and $\bm{u}$ agree at element boundaries due to the fact that quadrature-based $L^2$ projection under a GLL quadrature is equivalent to interpolation at GLL points.\footnote{{We note that the presence of boundary nodes in a quadrature does not necessarily reduce the sensitivity of the evaluation of the entropy-projected conservative variables $\bm{u}\LRp{\Pi_N\bm{v}}$.  This due to the fact that the projection is computed using weighted averages of the points: while GLL quadrature rules contain endpoints, the quadrature weights are small at boundary points, implying that boundary values factor less  strongly into the quadrature-based projection than interior points.  Additionally, while Gauss quadrature rules do not contain endpoints, the variation of the Gauss quadrature weights is slightly less than that of GLL quadrature weights. }}

This scenario illustrates some of the more subtle differences between the use of GLL and GQ quadratures.  We note that bound-preserving and TVD limiters \cite{zhang2010positivity, zhang2012maximum} are typically implemented in practice to detect and control spikes of the kind observed in these numerical experiments.  However, slope-limiting the conservative variables $\bm{u}$ may still result in spikes being generated when evaluating $\bm{u}\LRp{\Pi_N\bm{v}}$ (unless the slope is set to zero such that $\bm{u}$ is constant).  A more nuanced approach will likely require modifying the conservative variables $\bm{u}$ based on the projected entropy variables $\Pi_N \bm{v}$ to control oscillations while maintaining entropy conservation.  These strategies will be the focus in future work.  

\subsection{Two-dimensional experiments}
\label{sec:2d}

{
We now present numerical experiments in two dimensions to verify the semi-discrete entropy conservation and accuracy of the presented schemes for the two-dimensional compressible Euler equations:
\begin{align}
\pd{\rho}{t} + \pd{\LRp{\rho u}}{x} + \pd{\LRp{\rho v}}{y} &= 0,\\
\pd{\rho u}{t} + \pd{\LRp{\rho u^2 + p }}{x} + \pd{\LRp{\rho uv}}{y} &= 0,\nonumber\\
\pd{\rho v}{t} + \pd{\LRp{\rho uv}}{x} + \pd{\LRp{\rho v^2 + p }}{y} &= 0,\nonumber\\
\pd{E}{t} + \pd{\LRp{u(E+p)}}{x} + \pd{\LRp{v(E+p)}}{x}&= 0.\nonumber
\end{align}
The following experiments use a triangular volume quadrature rule from \cite{xiao2010quadrature} which is exact for polynomials of degree $2N$, with $(N+1)$ point one-dimensional Gauss quadrature rules on the faces.  $L^2$ errors are evaluated using a volume quadrature rule of degree $2N+2$.  

In two dimensions, the pressure is $p = (\gamma-1)\LRp{E - \frac{1}{2}\rho (u^2+v^2)}$, and the specific internal energy is $\rho e = E - \frac{1}{2}\rho (u^2+v^2)$.  The formula for the entropy $U(\bm{u})$ is the same as in one dimension.  
 The entropy variables in two dimensions are 
\begin{align}
v_1 = \frac{\rho e (\gamma + 1 - s) - E}{\rho e}, \qquad v_2 = \frac{\rho u}{\rho e}, \qquad v_3 = \frac{\rho v}{\rho e}, \qquad v_4 = -\frac{\rho}{\rho e}.
\end{align}
The conservation variables in terms of the entropy variables are given by
\begin{equation}
\rho = -(\rho e) v_4, \qquad \rho u = (\rho e) v_2, \qquad \rho v = (\rho e) v_3, \qquad E = (\rho e)\LRp{1 - \frac{{v_2^2+v_3^2}}{2 v_4}},
\end{equation}
where $\rho e$ and $s$ in terms of the entropy variables are 
\begin{equation}
\rho e = \LRp{\frac{(\gamma-1)}{\LRp{-v_4}^{\gamma}}}^{1/(\gamma-1)}e^{\frac{-s}{\gamma-1}}, \qquad s = \gamma - v_1 + \frac{{v_2^2+v_3^2}}{2v_4}.
\end{equation}
The entropy conservative numerical fluxes for the two-dimensional compressible Euler equations are given by
\begin{align}
&f^1_{1,S}(\bm{u}_L,\bm{u}_R) = \avg{\rho}^{\log} \avg{u},& &f^1_{2,S}(\bm{u}_L,\bm{u}_R) = \avg{\rho}^{\log} \avg{v},&\\
&f^2_{1,S}(\bm{u}_L,\bm{u}_R) = f^1_{1,S} \avg{u} + p_{\rm avg},&  &f^2_{2,S}(\bm{u}_L,\bm{u}_R) = f^1_{2,S} \avg{u},&\nonumber\\
&f^3_{1,S}(\bm{u}_L,\bm{u}_R) = f^2_{2,S},& &f^3_{2,S}(\bm{u}_L,\bm{u}_R) = f^1_{2,S} \avg{v} + p_{\rm avg},&\nonumber\\
&f^4_{1,S}(\bm{u}_L,\bm{u}_R) = \LRp{\frac{p_{\rm avg}^{\log}}{\gamma -1} + p_{\rm avg} + \frac{\nor{\bm{u}}^2_{\rm avg}}{2}}\avg{u},& &f^4_{2,S}(\bm{u}_L,\bm{u}_R) = \LRp{\frac{p_{\rm avg}^{\log}}{\gamma -1} + p_{\rm avg} + \frac{\nor{\bm{u}}^2_{\rm avg}}{2}}\avg{v},& \nonumber
\end{align}
where we have defined
\begin{equation}
p_{\rm avg} = \frac{\avg{\rho}}{2\avg{\beta}}, \qquad p_{\rm avg}^{\log} = \frac{\avg{\rho}^{\log}}{2\avg{\beta}^{\log}}, \qquad \nor{\bm{u}}^2_{\rm avg} = 2(\avg{u}^2 + \avg{v}^2) - \LRp{\avg{u^2} +\avg{v^2}}.  
\end{equation}

\subsubsection{Discontinuous profile on a periodic domain}

We again examine semi-discrete conservation of entropy by evolving a discontinuous initial profile to final time $T=2$ on the square domain $[-1,1]^2$.  We set the initial velocities to be zero, and initialize the density and pressure as a discontinuous square pulse
\begin{equation}
\rho(\bm{x},t) = \begin{cases}
3 & \LRb{x} < 1/2 \text{ and } \LRb{y} < 1/2\\
2 & \text{otherwise},
\end{cases} \qquad 
u(\bm{x},t) = v(\bm{x},t) = 0, \qquad
p(\bm{x},t) = \rho^\gamma.
\label{eq:discontin}
\end{equation}
Figure~\ref{fig:ec2d} shows the evolution of entropy over time for an entropy conservative scheme without additional interface dissipation.  As in the one-dimensional case, we observe that the entropy increases over time, but decreases with the time-step. Unlike the one-dimensional case, we observe a jump in the change in entropy near time $t = 1.86$, though this does not appear to affect the order of convergence of the change in entropy at the final time $\Delta U(T)$, which converges to zero as $O(\Delta t^4)$ (which corresponds to the order of the time-stepping scheme used).
}
\begin{figure}
\centering
\subfloat[$\Delta U(t)$]{
\includegraphics[width=.48\textwidth]{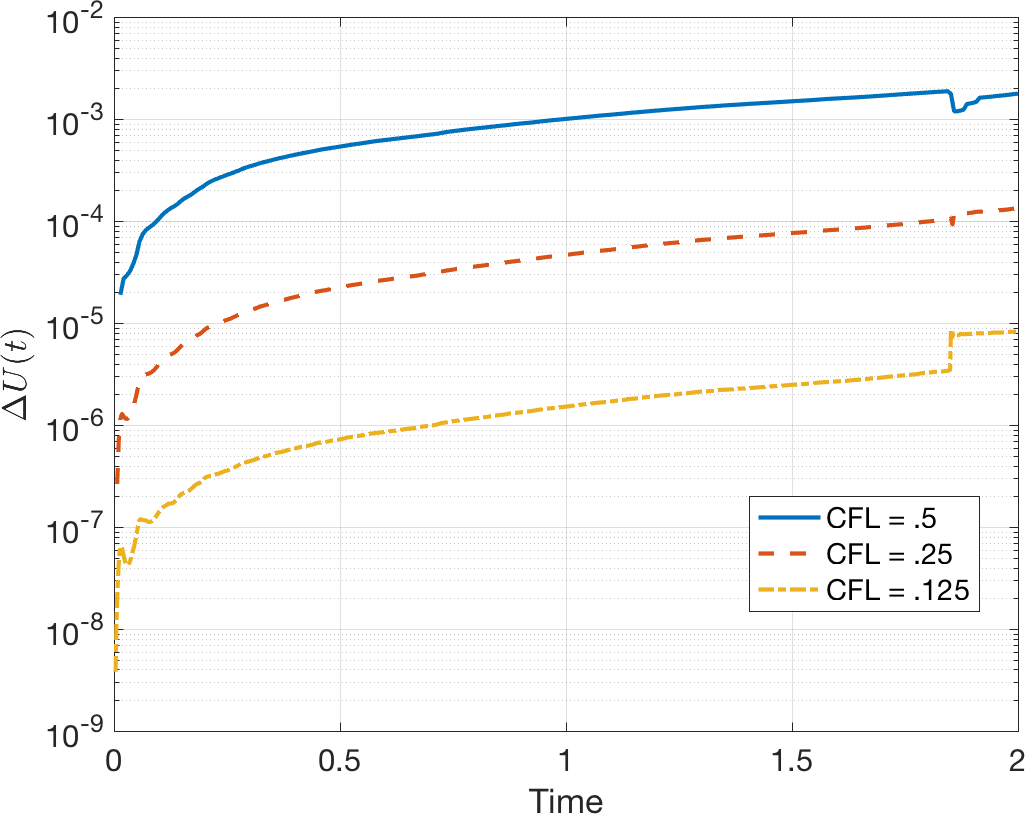}
}
\hspace{1em}
\subfloat[Convergence of $\Delta U(T)$ with $\Delta t$]{
\begin{tikzpicture}
\begin{loglogaxis}[
    width=.475\textwidth,
    xlabel={Time step size $\Delta t$},
    legend pos=south east, legend cell align=left, legend style={font=\tiny},	
    xmajorgrids=true, ymajorgrids=true, grid style=dashed,
]

\addplot+[semithick, mark options={solid, fill=markercolor}]
coordinates{(0.0070671378 ,0.0018)(0.0035335689 ,0.00013255)(0.0017667845,8.2229e-06)};
\logLogSlopeTriangleFlip{0.35}{0.175}{0.25}{4.01}{blue}

\end{loglogaxis}
\end{tikzpicture}
}
\caption{Change in entropy $\Delta U(t) = \LRb{U(t)-U(0)}$ over time and convergence of $\Delta U(T)$ at the final time $T$ for a $N=4$ and a mesh of $8\times 8$ subdivided quadrilaterals.  These results use a volume quadrature rule exact for degree $2N$ polynomials, and surface quadrature rules constructed using one-dimensional $(N+1)$ point Gauss quadratures.   The convergence of the change in entropy $\Delta U(t)$ at the final time $T = 2$ converges to zero as $O\LRp{\Delta t^{4}}$, matching the order of the 4th order time-stepper used.}
\label{fig:ec2d}
\end{figure}

\subsubsection{Isentropic vortex problem}

{
Next, we examine high order convergence in two dimensions using the vortex problem as set up in \cite{shu1998essentially, crean2017high}.  The analytical solution is given as 
\begin{align}
\rho(\bm{x},t) &= \LRp{1 - \frac{\frac{1}{2}(\gamma-1)(\beta e^{1-r(\bm{x},t)^2})^2}{8\gamma \pi^2}}^{\frac{1}{\gamma-1}}, \qquad p = \rho^{\gamma},\\
u(\bm{x},t) &= 1 - \frac{\beta}{2\pi} e^{1-r(\bm{x},t)^2}(y-y_0), \qquad v(\bm{x},t) = \frac{\beta}{2\pi} e^{1-r(\bm{x},t)^2}(y-y_0),\nonumber
\end{align}
where $u, v$ are the $x$ and $y$ velocity and $r(\bm{x},t) = \sqrt{(x-x_0-t)^2 + (y-y_0)^2}$.  Here, we take $x_0 = 5, y_0 = 0$ and $\beta = 5$.  

We solve on a periodic rectangular domain $[0, 20] \times [-5,5]$ until final time $T=5$.  The mesh is constructed by first building a mesh of uniform quadrilateral elements and subdividing each quadrilateral into two uniform triangles.  We estimate the $L^2$ errors and their rates of convergence, which are shown in Figure~\ref{fig:converge2d}.  We observe $L^2$ optimal $O(h^{N+1})$ rates of convergence for $N = 1,\ldots, 3$, while for $N = 4$ we observe a rate of convergence between $O(h^{N+1})$ and $O(h^{N+1/2})$.  We note that the rate of $O(h^{N+1/2})$ is the theoretically proven rate of convergence for upwind DG methods on general meshes applied to linear hyperbolic problems  \cite{johnson1986analysis,cockburn2008optimal}.  We note that this observed rate does not change significantly if the time-step is halved (improving from $4.785$ to $4.8$), suggesting that this slight degradation in convergence rate is not due to temporal errors.  
}

\begin{figure}
\centering
\begin{tikzpicture}
\begin{loglogaxis}[
    width=.525\textwidth,
    xlabel={Mesh size $h$},  ylabel={$L^2$ errors}, 
    xmin=.1, xmax=7,
    ymin=1e-5, ymax=1,   
    legend pos = south east, legend cell align=left, legend style={font=\tiny},	
    xmajorgrids=true, ymajorgrids=true, grid style=dashed
] 
\pgfplotsset{
cycle list={{blue,  mark=*},{red, mark=square*},{cyan, mark=triangle*},{black, mark=diamond*}}
}
\addplot+[semithick, mark options={solid, fill=markercolor}]
coordinates{(5,0.152443)(2.5,0.509207)(1.25,0.445656)(0.625,0.166519)(0.3125,0.036208)};
\logLogSlopeTriangle{0.4}{0.1}{0.72}{2.2}{blue}

\addplot+[semithick, mark options={solid, fill=markercolor}]
coordinates{(5,0.484478)(2.5,0.46951)(1.25,0.125591)(0.625,0.015585)(0.3125,0.001791)};
\logLogSlopeTriangleFlip{0.375}{0.1}{0.49}{3.12}{red}

\addplot+[semithick, mark options={solid, fill=markercolor}]
coordinates{(5,0.557086)(2.5,0.260022)(1.25,0.042815)(0.625,0.003611)(0.3125,0.000213)};
\logLogSlopeTriangleFlip{0.35}{0.0785}{0.305}{4.084}{cyan}

\addplot+[semithick, mark options={solid, fill=markercolor}]
coordinates{(5,0.463269)(2.5,0.169843)(1.25,0.017172)(0.625,0.000965)(0.3125,3.5e-05)};
\logLogSlopeTriangle{0.4}{0.1}{0.1275}{4.785}{black}

\legend{$N=1$, $N=2$, $N=3$, $N=4$}

\end{loglogaxis}
\end{tikzpicture}
\caption{Convergence of $L^2$ errors for the isentropic vortex problem at final time $T = 5$ for various $N$.  }
\label{fig:converge2d}
\end{figure}
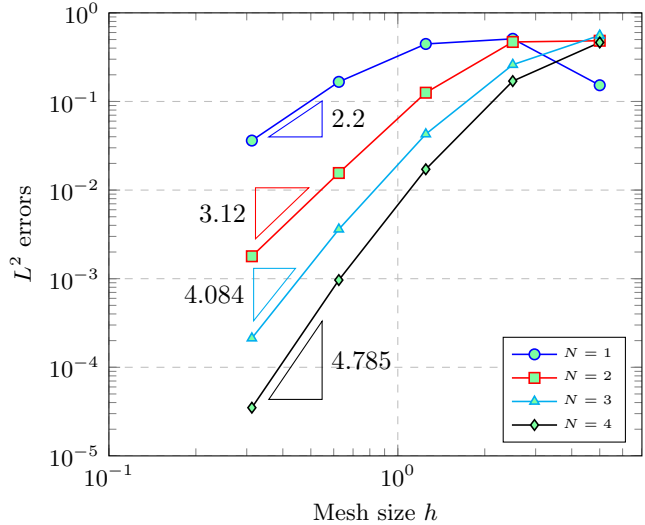

\subsubsection{A two-dimensional Riemann problem}

{
Finally, we present numerical results for a two-dimensional Riemann problem \cite{lax1998solution, kurganov2002solution, liska2003comparison} using an entropy stable high order DG scheme using Lax-Friedrichs penalization.  No additional stabilization, artificial dissipation, or limiting is applied.  The problem is posed on the square domain $[-.5, .5]^2$ with piecewise constant initial conditions
\begin{align}
&\rho(\bm{x}) = .5313,&  &u(\bm{x}) = 0,& &v(\bm{x}) = 0,&  &p(\bm{x}) = .4,&  &\bm{x} \in [0, .5]\times[0, .5],&\\
&\rho(\bm{x}) = 1,& &u(\bm{x}) = .7276,&  &v(\bm{x}) = 0,&  &p(\bm{x}) = 1,&  &\bm{x} \in [-.5, 0]\times[0, .5],&\nonumber\\
&\rho(\bm{x}) = .8,&  &u(\bm{x}) = 0,&  &v(\bm{x}) = 0,&  &p(\bm{x}) = 1,&  &\bm{x} \in [-.5, 0]\times[-.5,0],&\nonumber\\
&\rho(\bm{x}) = 1,&  &u(\bm{x}) = 0,&  &v(\bm{x}) = .7276,&  &p(\bm{x}) = 1,&  &\bm{x} \in [0,.5]\times[-.5,0].&\nonumber
\end{align}
In \cite{chen2017entropy}, the boundary condition is imposed by computing the exact shock speed using a one-dimensional Riemann solver on each boundary.  Here, we solve instead on an enlarged quadrilateral domain $[-1,1]^2$ with periodic boundary conditions, but consider the solution only on the smaller physical domain $[-.5, .5]^2$.  The size of the enlarged domain is chosen such that solution within the fictitious portion of the domain $[-1,1]^2\setminus [-.5,.5]^2$ does not pollute the solution within the smaller subdomain $[-.5, .5]^2$ at the final time $T = 1/4$.  The piecewise constant initial conditions are initialized as piecewise constants in each quadrant of the enlarged domain.  

The enlarged domain $[-1,1]^2$ is meshed using $64$ uniform quadrilaterals per direction.  A uniform triangular mesh of $8192$ elements is constructed by subdividing each quadrilateral into two triangles.  Figure~\ref{fig:riemann2d} shows the numerical solution on this mesh for $N=3$ using a CFL of $.125$ on both the enlarged and physical domains.  The result on the physical domain shows qualitative agreement with results presented in the literature.  
}

\begin{figure}
\centering
\subfloat[Enlarged domain $\LRs{-1,1}^2$ containing $64\times 64$ elements]{\includegraphics[width=.45\textwidth]{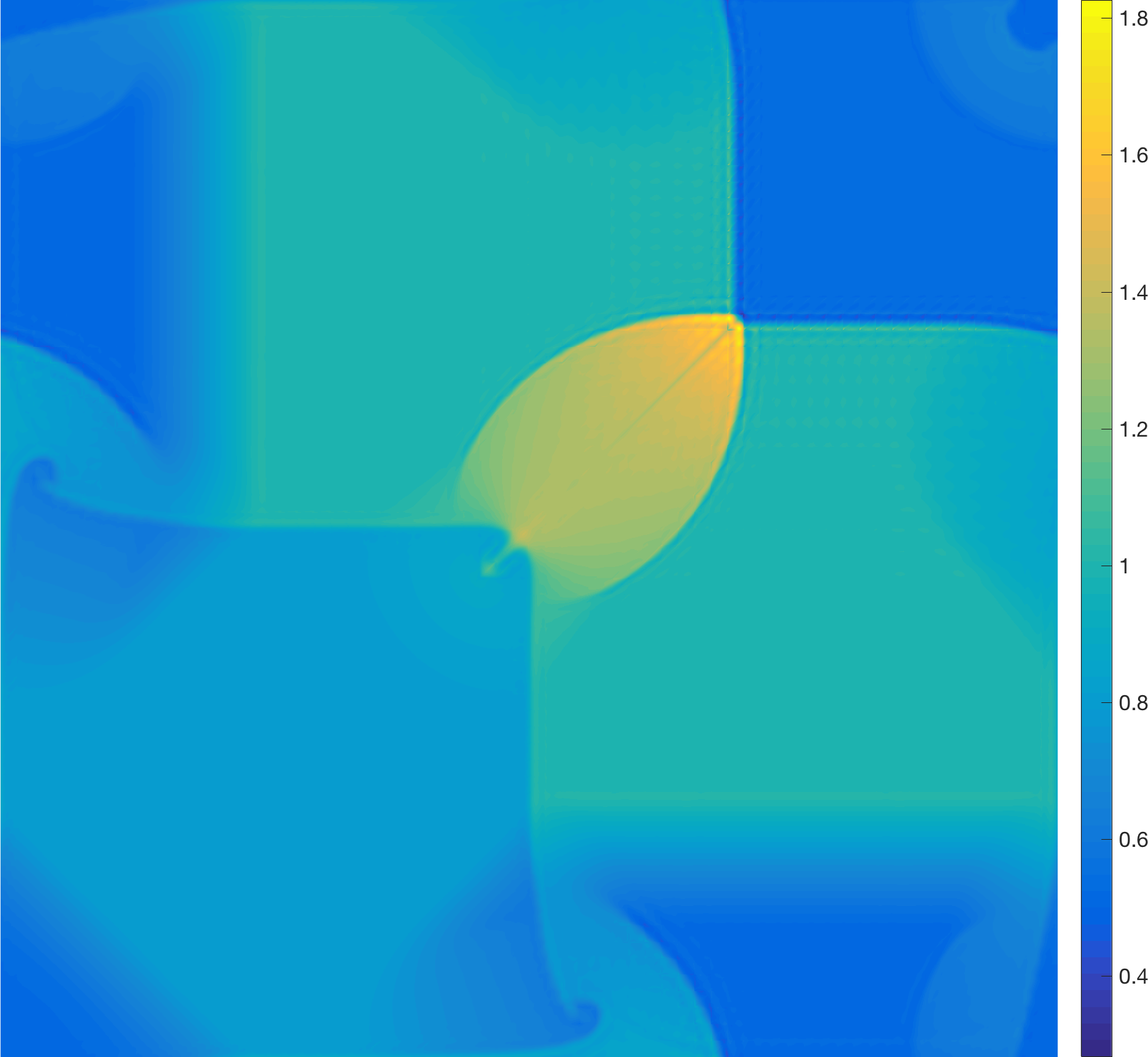}}
\hspace{2em}
\subfloat[Physical domain $\LRs{-.5,.5}^2$, $32\times 32$ elements]{\includegraphics[width=.45\textwidth]{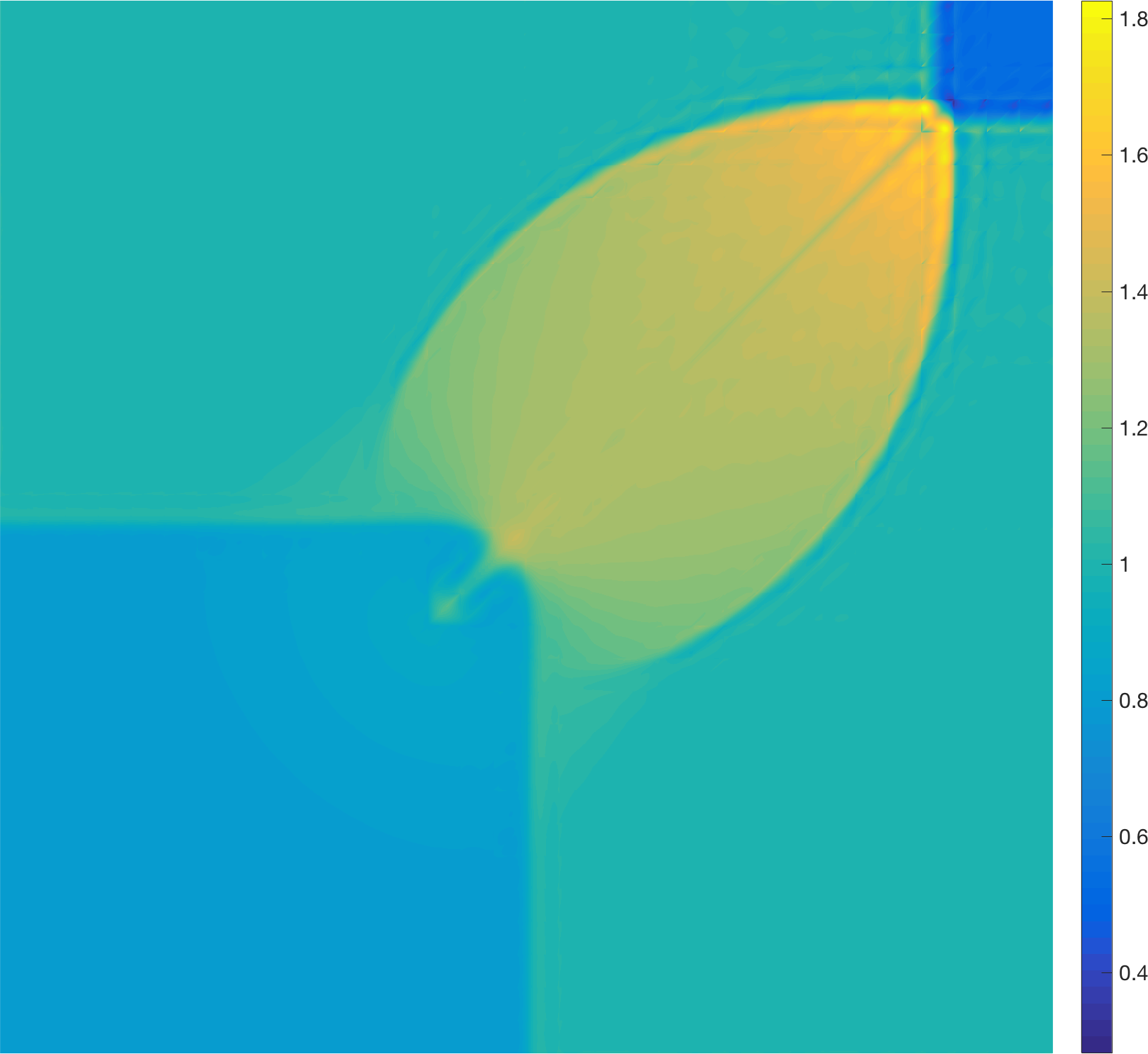}}
\caption{Numerical solution of the Riemann problem at time $T = .25$ using an entropy stable DG method with $N=3$.  These results use a volume quadrature rule of degree $2N$ and surface quadrature rules using one-dimensional GQ-$(N+1)$ quadratures.  }
\label{fig:riemann2d}
\end{figure}

\subsection{{On accuracy and computational cost}}

{While we have shown how to construct high order schemes which are discretely entropy conservative or entropy stable, we have not shown theoretically that these methods are high order accurate for general conservation laws.  We first note that, for conservation laws where $\bm{v} = \bm{u}$ (i.e.\ the entropy variables are the same as the conservative variables), the entropy-projected conservation variables $\tilde{\bm{u}} = \bm{u}\LRp{\Pi_N \bm{v}}$ reduce to the conservative variables.  By (\ref{eq:Drecovery}), 
\begin{equation}
 \LRs{\begin{array}{cc} 
\bm{P}_q & \bm{L}_q\end{array}} \bm{D}_N 
\end{equation}
is at least a degree $N$ approximation to the derivative.  Using arguments in \cite{chen2017entropy}, the truncation error of the proposed schemes is then at least $O\LRp{h^N}$ in smooth regions.  

When $\bm{v}\neq \bm{u}$, we require additional conditions for high order accuracy.  Ensuring that the truncation error is degree $r$ accurate in this more general case would require at least that 
\begin{equation}
\LRb{\bm{u} - \bm{u}\LRp{\Pi_N\bm{v}}} = O\LRp{h^{r}}
\end{equation}
in smooth regions.  In other words, we would require that the difference between $\tilde{\bm{u}}$ and $\bm{u}$ is high order accurate where $\bm{u}$ is a high order accurate approximation to the solution.  While we have not yet proven this, numerical experiments indicate that this is satisfied with $r = N+1$ in both one and two dimensions.  

We compute the $L^2$ error between $\bm{u}$ and $\bm{u}\LRp{\Pi_N\bm{v}}$ on the one-dimensional interval $[-1,1]$ and the two-dimensional domain $[-1,1]^2$.  The conservation variables are set to the polynomial $L^2$ projections of smooth functions.  In one dimension, we set
\begin{equation}
\rho(\bm{x}) = \rho_0 + e^{x/2}\sin(\pi x), \qquad \rho u(\bm{x}) = \sin(\pi x), \qquad E(\bm{x}) = E_0 + \frac{\rho}{2} u^2,
\end{equation}
while in two dimensions, we set 
\begin{align}
\rho(\bm{x}) &= \rho_0 + e^{(x+y)/2}\sin(\pi x)\sin(\pi y), \\
 \rho u(\bm{x}) &= \rho v(\bm{x}) = \sin(\pi x)\sin(\pi y), \qquad E(\bm{x}) = E_0 + \frac{\rho}{2} (u^2 + v^2).\nonumber
\end{align}
Uniform meshes of $K_{\rm 1D}$ elements are used in both cases.  The two-dimensional domain is first meshed using a uniform quadrilateral mesh of $K_{\rm 1D}\times K_{\rm 1D}$ elements in each direction, and a uniform triangular mesh is constructed by bisecting each quadrilateral element into two triangles.  In one dimension, a $(N+2)$ point Gauss quadrature rule is used, while in two dimensions, a quadrature rule exact for degree $2N$ polynomials is used.  The error is evaluated using a quadrature rule of two degrees higher.  

Figure~\ref{fig:accUV} plots the $L^2$ error $\nor{\bm{u} - \bm{u}\LRp{\Pi_N\bm{v}}}_{L^2\LRp{\Omega}}$ between the conservative and entropy-projected conservative variables in one and two dimensions for $\rho_0 = E_0 = 2$.  Convergence rates of $O(h^{N+1})$ are observed for  $N = 1,\ldots, 5$.  This implies that the entropy-projected conservative variables approximate the conservative variables with high order accuracy, and that the flux is approximated with high order accuracy.
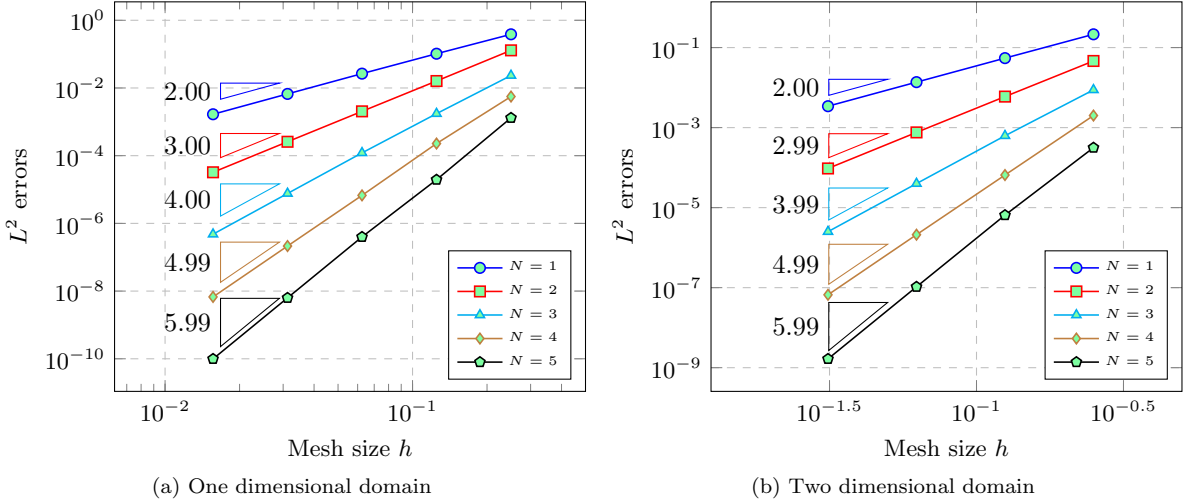
\begin{figure}
\centering
\subfloat[One dimensional domain]{
\begin{tikzpicture}
\begin{loglogaxis}[
    width=.475\textwidth,
    xlabel={Mesh size $h$},
    ylabel={$L^2$ errors}, 
    xmin=.00625, xmax=.5,
    legend pos=south east, legend cell align=left, legend style={font=\tiny},	
    xmajorgrids=true, ymajorgrids=true, grid style=dashed,
    legend entries={$N=1$,$N=2$,$N=3$,$N=4$,$N=5$}    
]
\pgfplotsset{
cycle list={{blue,  mark=*},{red, mark=square*},{cyan, mark=triangle*},{brown, mark=diamond*},{black, mark=pentagon*}}
}

\addplot+[semithick, mark options={solid, fill=markercolor}]
coordinates{(0.25,0.384932)(0.125,0.102767)(0.0625,0.02652)(0.03125,0.00667795)(0.015625,0.00167262)};
\logLogSlopeTriangleFlip{0.35}{0.125}{0.75}{2.00}{blue}
\addplot+[semithick, mark options={solid, fill=markercolor}]
coordinates{(0.25,0.129276)(0.125,0.0160502)(0.0625,0.00203945)(0.03125,0.000257735)(0.015625,3.23072e-05)};
\logLogSlopeTriangleFlip{0.35}{0.125}{0.6}{3.00}{red}
\addplot+[semithick, mark options={solid, fill=markercolor}]
coordinates{(0.25,0.0233657)(0.125,0.0017421)(0.0625,0.000120953)(0.03125,7.63711e-06)(0.015625,4.78665e-07)};
\logLogSlopeTriangleFlip{0.35}{0.125}{0.45}{4.00}{cyan}
\addplot+[semithick, mark options={solid, fill=markercolor}]
coordinates{(0.25,0.00557888)(0.125,0.000228086)(0.0625,6.71799e-06)(0.03125,2.1405e-07)(0.015625,6.72003e-09)};
\logLogSlopeTriangleFlip{0.35}{0.125}{0.28}{4.99}{brown}
\addplot+[semithick, mark options={solid, fill=markercolor}]
coordinates{(0.25,0.00130749)(0.125,1.94884e-05)(0.0625,3.98247e-07)(0.03125,6.30018e-09)(0.015625,9.90995e-11)};
\logLogSlopeTriangleFlip{0.35}{0.125}{0.115}{5.99}{black}

\end{loglogaxis}
\end{tikzpicture}
}
\subfloat[Two dimensional domain]{
\begin{tikzpicture}
\begin{loglogaxis}[
    width=.475\textwidth,
    xlabel={Mesh size $h$},
    ylabel={$L^2$ errors}, 
    xmin=.0125, xmax=.5,
    legend pos=south east, legend cell align=left, legend style={font=\tiny},	
    xmajorgrids=true, ymajorgrids=true, grid style=dashed,
    legend entries={$N=1$,$N=2$,$N=3$,$N=4$,$N=5$}    
]
\pgfplotsset{
cycle list={{blue,  mark=*},{red, mark=square*},{cyan, mark=triangle*},{brown, mark=diamond*},{black, mark=pentagon*}}
}

\addplot+[semithick, mark options={solid, fill=markercolor}]
coordinates{(0.25,0.214705)(0.125,0.0545393)(0.0625,0.0136973)(0.03125,0.00342883)};
\logLogSlopeTriangleFlip{0.375}{0.125}{0.76}{2.00}{blue}

\addplot+[semithick, mark options={solid, fill=markercolor}]
coordinates{(0.25,0.0462894)(0.125,0.00595998)(0.0625,0.000759106)(0.03125,9.53391e-05)};
\logLogSlopeTriangleFlip{0.375}{0.125}{0.6}{2.99}{red}

\addplot+[semithick, mark options={solid, fill=markercolor}]
coordinates{(0.25,0.00876667)(0.125,0.000625609)(0.0625,3.99053e-05)(0.03125,2.50774e-06)};
\logLogSlopeTriangleFlip{0.375}{0.125}{0.44}{3.99}{cyan}

\addplot+[semithick, mark options={solid, fill=markercolor}]
coordinates{(0.25,0.00200257)(0.125,6.52844e-05)(0.0625,2.10741e-06)(0.03125,6.63717e-08)};
\logLogSlopeTriangleFlip{0.375}{0.125}{0.275}{4.99}{brown}

\addplot+[semithick, mark options={solid, fill=markercolor}]
coordinates{(0.25,0.000314991)(0.125,6.49805e-06)(0.0625,1.04992e-07)(0.03125,1.65675e-09)};
\logLogSlopeTriangleFlip{0.375}{0.125}{0.105}{5.99}{black}

\end{loglogaxis}
\end{tikzpicture}
}
\caption{Convergence of the $L^2$ difference $\nor{\bm{u} - \bm{u}\LRp{\Pi_N\bm{v}}}_{L^2\LRp{\Omega}}$ between conservative variables and conservative variables as a function of the projected entropy variables with mesh refinement.  A GQ-$(N+2)$ quadrature rule is used in one dimension, while a volume quadrature rule of degree $2N$ is used in two dimensions.  The $L^2$ norm is evaluated using a quadrature of two degrees higher.   }
\label{fig:accUV}
\end{figure}
Numerical experiments also suggest that the constant in the $O(h^{N+1})$ error rate increases if $\rho_0$ and $E_0$ are decreased.  This is likely due to the fact that the error $\nor{\bm{u} - \bm{u}\LRp{\Pi_N\bm{v}}}_{L^2\LRp{\Omega}}$ depends on the nonlinearity of the mapping between conservation and entropy variables.  Because this mapping is non-invertible when $\rho \leq 0$ or $E-\frac{1}{2}\rho \LRb{\bm{u}}^2 \leq 0$, we expect that it becomes more nonlinear the closer the minimum values of the thermodynamic variables are to $0$.  

Finally, we briefly discuss the computational cost of the entropy stable method described in this work.  It was mentioned previously that the method proposed in this work can utilize high order polynomial approximation spaces with $N_p \leq N_q$, i.e.\ fewer degrees of freedom than the dimension of the underlying quadrature space.  However, this comes at the additional cost of computing $L^2$ projections of entropy variables and applying projection and lifting matrices for each right hand side evaluation.  Additionally, the main computational steps involving the Hadamard product and the application of the operator $\bm{D}^i_N$ depend only on the number of quadrature points $N_q$ and not $N_p$.  

Thus, the computational cost of the proposed methods is higher than that of SBP methods based on under-integrated quadrature rules.  However, the proposed work also allows for the construction of entropy conservative and entropy stable schemes for more general pairings of approximation spaces and quadrature.  Additionally, numerical experiments indicate that, in certain cases, the use of over-integrated quadrature rules improves solution accuracy.  Future work will compare the qualitative and quantitative behavior of entropy stable diagonal-norm SBP methods with the method proposed in this work.  
}

\section{Conclusions}

This work presents a generalization of discretely entropy conservative methods, allowing for the combination of more general approximation spaces and (sufficiently accurate) quadrature rules.  {We introduce SBP-like operators using quadrature-based projection and lifting matrices, and construct high order DG schemes for conservation laws.}  The resulting schemes satisfy a discrete conservation of entropy.  Numerical results for the compressible Euler equations indicate that the resulting methods deliver high order accuracy for smooth solutions while improving stability and robustness for under-resolved and shock solutions compared to non-entropy conservative and non-entropy stable schemes.  We also describe differences between the sensitivity of methods based on Gauss-Lobatto quadrature and methods based on Gauss quadrature rules.  

We note that, while entropy conservative and entropy stable schemes improve the robustness of solvers for nonlinear hyperbolic conservation laws, they do not address problems such as spurious oscillations in high order approximations of shock solutions or positivity preservation of density and pressure variables \cite{chen2017entropy}.  These issues can be addressed through the use of regularization (e.g.\ filtering, artificial viscosity) and/or limiting.  However, this can lead to a reliance on regularization techniques as ad-hoc stabilization mechanisms.  Addressing this issue requires constructing methods which avoid the need for excessive regularization and limiting in the pursuit of stability, and is a significant motivation for the development of entropy conservative and entropy stable discretizations.  

{Future work will focus on generalizations to three dimensions (including tetrahedra, prisms, and pyramidal elements), as well as analytical estimates for the error between the conservative and entropy-conservative variables.  These estimates will be necessary to construct error estimates for the proposed entropy stable methods.}  

\section{Acknowledgments}

The author thanks Lucas Wilcox, Andrew Winters, David M.\ Williams, and Weifeng Qiu for helpful discussions.  The author also thanks David C.\ Del Rey Fernandez for the name ``decoupled SBP operator''.  Jesse Chan is supported by the National Science Foundation under awards DMS-1719818 and DMS-1712639.  

\appendix
\section{Recovery of known schemes for Burgers' equation}
\label{appendix:A}
\noteOne{
In this appendix, we show how the framework presented recovers some existing entropy conservative (stable) schemes, and describe in more detail the connection between flux differencing and split formulations \cite{gassner2016split} in the context of the Burgers' equation.  The one-dimensional Burgers equation has flux $f(u) = u^2/2$, and can be rewritten in split form as
\begin{equation}
\pd{u}{t} + \frac{1}{3}\LRp{\pd{u^2}{x} + u\pd{u}{x}} = 0.  
\end{equation}
Assuming smooth $u$, one can show that discretizing this form of Burgers' equation results in a scheme which conserves the square entropy $U(\bm{u}) = u^2/2$ \cite{ranocha2017extended, ranocha2017generalised}.   As pointed out in \cite{gassner2016split, gassner2017br1}, this split formulation may also be recovered using flux differencing under the following two-point flux
\begin{equation}
f_S(u_L,u_R) = \frac{1}{6}(u_L^2 + u_Lu_R + u_R^2).
\label{eq:burgflux}
\end{equation}
Applying (\ref{eq:fluxdiff}) yields the split form of Burgers' equation
\begin{equation}
2\left.\pd{f_S(u(x),u(y))}{x}\right|_{y=x} = \frac{1}{3}\left.\pd{\LRp{u(x)^2 + u(x)u(y) + u(y)^2}}{x}\right|_{y=x} = \frac{1}{3}\LRp{\pd{u^2}{x} + u\pd{u}{x}}.
\end{equation}
Recovering the split-form of Burgers' equation relies on the property that $\pd{\LRp{u(y)^2}}{x} = u(y)^2\pd{\LRp{1}}{x} = 0$.  More generally, recovery of the split form can be achieved by using flux differencing in combination with any differential operator $D$ such that $D1 = 0$.  

Given the entropy conservative flux (\ref{eq:burgflux}) for Burgers' equation, we can show that the decoupled SBP operator recovers existing entropy stable schemes for Burgers' equation \cite{ranocha2017extended, ranocha2017generalised}.  We will assume periodic boundary conditions for simplicity and use the energy conserving flux
\begin{equation}
f_S(u_L,u_R) = \frac{1}{6}\LRp{u_L^2 + u_Lu_R + u_R^2}.  
\end{equation}
Define the entries of the matrix $(\bm{F}_S)_{ij} = u(\hat{{x}}_i)^2 + u(\hat{{x}}_i)u(\hat{{x}}_j) + u(\hat{{x}}_j)^2$.  Then, we have that 
\begin{equation}
\bm{F}_S = \frac{1}{6}\LRp{\bm{U}_L \circ \bm{U}_L + \bm{U}_L \circ \bm{U}_R + \bm{U}_R\circ \bm{U}_R}, \qquad \bm{U}_L = 
\LRs{\begin{array}{c}\bm{u}_q\\\bm{u}_f\end{array}}
\bm{e}^T, \qquad \bm{U}_R = \bm{e}\LRs{\begin{array}{c}\bm{u}_q\\\bm{u}_f\end{array}}^T,
\end{equation}
where $\bm{e}$ is the vector of all ones and $\bm{u}_q, \bm{u}_f$ denote the vector of $u(x)$ evaluated at volume and surface quadrature points, respectively.  
Applying the one-dimensional multi-element formulation (\ref{eq:multielemformulation}) to (\ref{eq:burgflux}) gives
\begin{align}
\td{\bm{u}}{t} +  \frac{1}{3}\LRs{\begin{array}{cc} 
\bm{P}_q & \bm{L}_q\end{array}} \LRp{\bm{D}_N \circ \LRp{\bm{U}_L \circ \bm{U}_L + \bm{U}_L \circ \bm{U}_R + \bm{U}_R\circ \bm{U}_R}} \bm{1} + \bm{L}_q\diag{{\bm{n}}}\LRp{\bm{f}^* - \bm{f}(\tilde{\bm{u}}_f)} = 0.
\label{eq:burgDN}
\end{align}
Using the definition of the Hadamard product and that $\bm{D}_N\bm{1} = 0$ from Theorem~\ref{thm:sbp}, we have that 
\begin{align}
\LRp{\LRp{\bm{D}_N\circ \bm{U}_L}\bm{1}}_i &= \sum_{j=1}^{N_q+N^f_q} \LRp{\bm{D}_N}_{ij} \LRp{\LRs{\begin{array}{c}\bm{u}_q\\\bm{u}_f\end{array}}}_j = \LRp{\bm{D}_N\LRs{\begin{array}{c}\bm{u}_q\\\bm{u}_f\end{array}}}_i,\\
\LRp{\LRp{\bm{D}_N\circ \bm{U}_R}\bm{1}}_i &= \sum_{j=1}^{N_q+N^f_q} \LRp{\bm{D}_N}_{ij} \LRp{\LRs{\begin{array}{c}\bm{u}_q\\\bm{u}_f\end{array}}}_i = \LRp{{\rm diag}\LRp{\LRs{\begin{array}{c}\bm{u}_q\\\bm{u}_f\end{array}}}\bm{D}_N\bm{1}}_i = \bm{0}
\end{align}
We can similarly show that 
\begin{align}
\LRp{\bm{D}_N\circ \bm{U}_L \circ \bm{U}_L}\bm{1} &= \bm{D}_N \LRs{\begin{array}{c}\bm{u}_q\\\bm{u}_f\end{array}}^2, \\
\LRp{\bm{D}_N\circ \bm{U}_L \circ \bm{U}_R}\bm{1} &= {\rm diag}\LRp{\LRs{\begin{array}{c}\bm{u}_q\\\bm{u}_f\end{array}}}\bm{D}_N\LRs{\begin{array}{c}\bm{u}_q\\\bm{u}_f\end{array}},  \qquad
\LRp{\bm{D}_N\circ \bm{U}_R \circ \bm{U}_R}\bm{1} = \bm{0}.\nonumber
\end{align}
where the entries are $\LRs{\begin{array}{c}\bm{u}_q\\\bm{u}_f\end{array}}^2$ are the squared entries of $\LRs{\begin{array}{c}\bm{u}_q\\\bm{u}_f\end{array}}$.  Substituting these into (\ref{eq:burgDN}) simplifies the formulation
\begin{align}
\td{\bm{u}}{t} &+  \frac{1}{3}\LRs{\begin{array}{cc} 
\bm{P}_q & \bm{L}_q\end{array}} \LRp{\bm{D}_N  \LRs{\begin{array}{c}\bm{u}_q\\\bm{u}_f\end{array}}^2 + {\rm diag}\LRp{\LRs{\begin{array}{c}\bm{u}_q\\\bm{u}_f\end{array}}} \bm{D}_N \LRs{\begin{array}{c}\bm{u}_q\\\bm{u}_f\end{array}}} \\
&+ \frac{1}{3}\bm{L}_q\diag{{\bm{n}}}\LRp{\frac{1}{6}\LRp{\bm{u}_f^2 + \bm{u}_f^+\bm{u}_f + \LRp{\bm{u}_f^+}^2}  - \frac{1}{2}\bm{u}_f^2} = 0,\nonumber
\label{eq:burgDN2}
\end{align}
where $\bm{u}_f^+$ denotes the values of the solution at face quadrature points on a neighboring elements.

We can now use (\ref{eq:pqvq}) and the structure of $\bm{D}_N$ to simplify (\ref{eq:burgDN2}) 
\begin{align}
\LRs{\begin{array}{cc} \bm{P}_q & \bm{L}_q\end{array}} 
\bm{D}_N  \LRs{\begin{array}{c}\bm{u}_q\\\bm{u}_f\end{array}}^2 &= \LRs{\begin{array}{cc} \bm{P}_q & \bm{L}_q\end{array}} \LRs{\begin{array}{c}
\bm{D}_q \bm{u}_q^2 + \frac{1}{2}\bm{V}_q\bm{L}_q{\rm diag}\LRp{\hat{\bm{n}}}\LRp{\bm{u}_f^2 - \bm{V}_f\bm{P}_q\bm{u}_q^2}\\
\frac{1}{2}{\rm diag}\LRp{\hat{\bm{n}}} \LRp{\bm{u}_f^2 - \bm{V}_f\bm{P}_q\bm{u}_q^2}\\
\end{array}}\\
&= \bm{D}_i\bm{P}_q\bm{u}_q^2 + \bm{L}_q{\rm diag}\LRp{\hat{\bm{n}}}\LRp{ \bm{u}_f^2 - \bm{V}_f\bm{P}_q\bm{u}_q^2 }\nonumber
\end{align}
We can similarly simplify the second part of (\ref{eq:burgDN2})
\begin{align}
&\LRs{\begin{array}{cc} \bm{P}_q & \bm{L}_q\end{array}} 
{\rm diag}\LRp{ \LRs{\begin{array}{c}\bm{u}_q\\\bm{u}_f\end{array}}}\bm{D}_N  \LRs{\begin{array}{c}\bm{u}_q\\\bm{u}_f\end{array}} \\ &=\LRs{\begin{array}{cc} \bm{P}_q & \bm{L}_q\end{array}} \LRs{\begin{array}{c}
{\rm diag}\LRp{ \bm{u}_q}\LRp{\bm{D}_q \bm{u}_q + \frac{1}{2}\bm{V}_q\bm{L}_q{\rm diag}\LRp{\hat{\bm{n}}}\LRp{\bm{u}_f - \bm{V}_f\bm{P}_q\bm{u}_q}}\\
\frac{1}{2}{\rm diag}\LRp{\bm{u}_f\hat{\bm{n}}} \LRp{\bm{u}_f - \bm{V}_f\bm{P}_q\bm{u}_q}\\
\end{array}}\nonumber\\
&= 
\bm{P}_q {\rm diag}\LRp{ \bm{u}_q}\LRp{\bm{D}_q \bm{u}_q}, \nonumber
\end{align}
where we have used that, because $u\in P^N$, $\bm{u} = \bm{P}_q\bm{V}_q\bm{u} = \bm{P}_q\bm{u}_q$ and $\bm{u}_f = \bm{V}_f\bm{u} = \bm{V}_f\bm{P}_q\bm{u}_q$.  Combining these two yields a simplified formulation
\begin{align}
&\td{\bm{u}}{t} + \frac{1}{3}\LRp{\bm{D}_i\bm{P}_q\bm{u}_q^2  + \bm{P}_q {\rm diag}\LRp{ \bm{u}_q}\LRp{\bm{D}_q \bm{u}_q}} \\
&+ \bm{L}_q{\rm diag}\LRp{\hat{\bm{n}}}\LRp{\frac{1}{6}\LRp{\bm{u}_f^2 + \bm{u}_f^+\bm{u}_f + \LRp{\bm{u}_f^+}^2}  - \frac{1}{2}\bm{u}_f^2 + \frac{1}{3}\LRp{\bm{u}_f^2 - \bm{V}_f\bm{P}_q\bm{u}_q^2 }}  = 0.
\end{align}
Cancelling out flux terms gives 
\begin{align}
 \td{\bm{u}}{t} &+  \frac{1}{3}\LRp{\bm{D}_i\bm{P}_q\bm{u}_q^2  + \bm{P}_q {\rm diag}\LRp{ \bm{u}_q}\LRp{\bm{D}_q \bm{u}_q}} \\
 &+ \bm{L}_q {\rm diag}\LRp{\hat{\bm{n}}} \LRp{\frac{1}{6}\LRp{\bm{u}_f^+\bm{u}_f + \LRp{\bm{u}_f^+}^2} -\frac{1}{3}\bm{V}_f\bm{P}_q\bm{u}_q^2} = 0.\nonumber
\end{align}
If Gauss quadrature with $(N+1)$ points is used, and the basis is chosen to be the co-located Lagrange basis at Gauss nodes, then $\bm{V}_q = \bm{P}_q = \bm{I}$, $\bm{u} = \bm{u}_q$, and the formulation reduces to
\begin{align}
 \td{\bm{u}}{t} &+  \frac{1}{3}\LRp{\bm{D}_i \bm{u}^2  + {\rm diag}\LRp{ \bm{u}}\LRp{\bm{D}_i \bm{u}}} 
+ \bm{L}_q {\rm diag}\LRp{\hat{\bm{n}}} \LRp{\frac{1}{6}\LRp{\bm{u}_f^+\bm{u}_f + \LRp{\bm{u}_f^+}^2} -\frac{1}{3}\bm{V}_f\LRp{\bm{u}^2}} = 0.
\end{align}
This is equivalent to the weak form of the split formulation and the correction terms introduced in \cite{ranocha2017extended} for Burgers' equation with an entropy conservative flux.  Entropy conservative SBP-SATs were introduced for generalized SBP operators and the compressible Euler equations in \cite{ranocha2017comparison}; however, we have not yet determined if (\ref{eq:multielemformulation}) recovers these formulations as well. 
}

\bibliographystyle{unsrt}
\bibliography{dg}

\end{document}